\documentclass[11pt,reqno]{amsart}
\usepackage{a4wide}

\numberwithin{equation}{section}
\usepackage{mathrsfs}
\usepackage{amsfonts}
\usepackage{amsmath}
\usepackage{stmaryrd}
\usepackage{amssymb}
\usepackage{amsthm}
\usepackage{mathrsfs}
\usepackage{url}
\usepackage{amsfonts}
\usepackage{amscd}
\usepackage{indentfirst}
\usepackage{enumerate}
\usepackage{amsmath,amsfonts,amssymb,amsthm}
\usepackage{amsmath,amssymb,amsthm,amscd}
\usepackage{graphicx,mathrsfs}
\usepackage{appendix}

\usepackage[numbers,sort&compress]{natbib}

\newcommand{\R}{\mathbb{R}}

\renewcommand{\theequation}{\arabic{section}.\arabic{equation}}

\setcounter{equation}{0}

\newtheorem{theo}{Theorem}[section]
\newtheorem{lem}[theo]{Lemma}

\newtheorem{defi}[theo]{Definition}
\newtheorem{rema}[theo]{Remark}

\begin{document}
\title[{multi-peak positive solutions for the nonlinear Kirchhoff equations}]
{The existence of multi-peak positive solutions for nonlinear Kirchhoff equations on $\R^3$
}

\author[H. Chen]{Hong Chen}

\address[Hong Chen]{School of Mathematics and Statistics, Central China Normal University, Wuhan 430079, China}
\email{chenhongcwh@163.com}

\author[Q. Hua]{Qiaoqiao Hua$^{\dag}$}
\address[Qiaoqiao Hua]{School of Mathematics and Statistics, Central China Normal University, Wuhan 430079, China}
\email{hqq@mails.ccnu.edu.cn}

\date{\today}

%\maketitle

\begin{abstract}
In this work,  we study the following Kirchhoff equation
\begin{equation*}
\begin{cases}
-\left(\varepsilon^2 a+\varepsilon b\int_{\mathbb R^3}|\nabla u|^2\right)\Delta u +u =Q(x)u^{q-1},\quad u>0,\quad x\in {{\R}^{3}},\\
u\to 0,\quad \text{as}\ |x|\to +\infty,
\end{cases}
\end{equation*}
where $a,b>0$ are constants, $2<q<6$, and $\varepsilon>0$ is a parameter. Under some suitable assumptions on the function $Q(x)$, we obtain that the equation above has positive multi-peak solutions concentrating at a critical point of\ $Q(x)$ for $\varepsilon>0$ sufficiently small, by using the Lyapunov-Schmidt reduction method. We extend the result in (Discrete Contin. Dynam. Systems 6(2000), 39--50) to the nonlinear Kirchhoff equation.
%Dancer and Yan in (Discrete Contin. Dynam. Systems 6(2000), 39--50) proved that the~Schr\"{o}dinger~ equation has positive multi-peak solutions concentrating at a designated saddle point or a strictly local minimum point of potential function in $\mathbb{R}^N$. They also showed that there is no multi-peak solution concentrating at a strictly local maximum point of potential function in $\mathbb{R}^N$. We extend the result got by Dancer and Yan in (Discrete Contin. Dynam. Systems 6(2000), 39--50) about the existence of solutions for the nonlinear~Schr\"odinger equation to the nonlinear Kirchhoff equation.
\end{abstract}

\keywords {\textbf{Kirchhoff equations, Multi-peak positive solutions, Lyapunov-Schmidt reduction method}}

\thanks{$^{\dag}$ Corresponding author: Qiaoqiao Hua}
\maketitle
%\baselineskip 18p

\section{Introduction and main results}
\setcounter{equation}{0}
In this paper, we consider the following Kirchhoff equation
\begin{equation}\label{11}
\begin{cases}
-\left(\varepsilon^2 a+\varepsilon b\int_{\mathbb R^3}|\nabla u|^2\right)\Delta u +u =Q(x)u^{q-1},\quad u>0,\quad x\in {{\R}^{3}},\\
u\to 0,\quad \text{as}\ |x|\to +\infty,
%\text{µ±} \ |x|\to +\infty \ \text{ʱ}, u\to 0,
\end{cases}
\end{equation}
where $a,b>0$ are constants, $2<q<6$, $\varepsilon>0$ is a parameter and $Q(x):{\R}^{3}\rightarrow {\R}$ is a smooth bounded function.

Problem\ (\ref{11}) and its variants have been studied extensively in the literature. To extend the classical D'Alembert's wave equations for free vibration of elastic strings, Kirchhoff\ \cite{20} first proposed the following time-dependent wave equation
\begin{equation}\label{12}
  \rho \frac{\partial^2 u}{\partial^2 t}-\left( \frac{P_0}{h}+\frac{E}{2L}\int_0^L \left|\frac{\partial u}{\partial x}\right|^2\,\mathrm{d}x\right)\frac{\partial^2 u}{\partial^2 x}=0.
\end{equation}
Bernstein\ \cite{4} and Pohozaev\ \cite{31} studied the above type of Kirchhoff equations quite early. In order to study the problem\ (\ref{12}) preferably,\ Lions\ \cite{26} introduced an abstract functional framework to this problem. After that, many interesting results of Kirchhoff equations can be found in e.g.\ \cite{2,11,12,15,16,43} and the reference therein. From a mathematical point of view, Kirchhoff equation is nonlocal, in the sense that the term\ $\left(\int_{\mathbb R^3} \left| \nabla u\right|^2\right)\Delta u$ depends not only on~$\Delta u$, but also on the integral of ~$\left|\nabla u\right|^2$~ over the whole space. This feature brings new mathematical difficulties, which makes the study of Kirchhoff type equations particularly meaningful. We refer to e.g.\ \cite{17,18,19,30} for mathematical researches on Kirchhoff type equations in bounded domains and in the whole space.

In fact, equation (\ref{11}) is closely related to the\ Schr\"odinger equations. When\ $a=1$ and $b=0$, equation\ (\ref{11}) is reduced to the perturbed Schr\"odinger equation. %Hence we need to know the Schr\"odinger equations before studying the Kirchhoff equations.
Kwong\ \cite{21} considered the classical Schr\"odinger equation
\begin{equation}\label{14}
  -\Delta u+u=u^p,\quad x\in \mathbb R^N,
\end{equation}
where $1<p<+\infty$ if\ $N=1,2$, and  $1<p<\frac{N+2}{N-2}$ if\ $N\ge 3$. Equation\ (\ref{14}) has a unique radial symmetric and nondegenerate positive solution. Based on this property, Cao, Noussair and Yan\ \cite{6} proved the existence of multi-peak solutions for equation
\begin{equation}\label{15}
-\Delta u+\lambda^2 u=Q(x)|u|^{q-2}u,\quad x\in\ \mathbb R^N,
 \end{equation}
where $\lambda \not=0,\ N\ge3$ and $2<q<2N/(N-2)$.

Dancer and\ Yan\ \cite{24} studied the following equation
\begin{equation}\label{16}
\begin{cases}
-\varepsilon^2 \Delta u +u =Q(y)u^{p-1},\quad u>0,\quad y\in {{\R}^{N}},\\
u\to 0,\quad \text{as}\ |y|\to +\infty,
\end{cases}
\end{equation}
where\ $\varepsilon>0$ is a parameter, $2<p<+\infty$ if $N=2$ and $2<p<2N/(N-2)$ if $N>2$. They not only proved that the~Schr\"{o}dinger~equation has positive multi-peak solutions concentrating at a designated saddle point or a strictly local minimum point of $Q(y)$ in $\mathbb{R}^N$, but also showed that there is no multi-peak solution concentrating at a strictly local maximum point of $Q(y)$ in $\mathbb{R}^N$. Besides, many interesting results of multi-peak solutions can be found in e.g.\ \cite{38,37,7,14,10,13,35,27,29} and the reference therein.

Based on the uniqueness and nondegeneracy property of equation (\ref{14}), the authors in \cite{23} proved the uniqueness and nondegeneracy of positive solutions for equation
\begin{equation}\label{17}
  - \left( a +  b\int_{\mathbb{R}^3} \left| \nabla u \right|^2 \right)\Delta u + u = u^p,\quad u>0\quad \text{in\ $\mathbb R^3$ },
\end{equation}
where\ $1<p<5$. Then, by using Lyapunov-Schmidt reduction method, they constructed the existence and the uniqueness of single peak solutions to equation
\begin{equation}\label{18}
  - \left(\varepsilon^2 a + \varepsilon b\int_{\mathbb{R}^3} \left| \nabla u \right|^2 \right)\Delta u + V(x)u = u^p,\quad u>0\quad \text{ in\ $\mathbb R^3$ },
\end{equation}
wher\ $1<p<5$ and $\varepsilon>0$ is sufficiently small.

Luo, Peng and Wang\ \cite{25} proved equation\ (\ref{18}) has positive multi-peak solutions concentrating at different points if $\varepsilon>0$ is sufficiently small. It should be pointed that, they constructed the multi-peak solutions of equation\ (\ref{18}) based on the following systems
\begin{equation}\label{19}
-\left(a+b\sum_{j=1}^{k}{\int_{{\R}^{3}}{{\left| \nabla u_{j} \right|}^{2}}}\right){\Delta u_{i}}+V{\left({a_i}\right)}{u_{i}}={(u_{i})^{p}},\ u_{i}>0,\ i=1,\ldots,k,\ x\in {{\R}^{3}}.\\
\end{equation}
They also showed that\ $a_i\ (1\le i\le k)$ are critical points of\ $V$, and there exist the multi-peak solutions of the form $u_{\varepsilon}=\sum_{i=1}^k u_i\bigg( \frac {x-y_{\varepsilon,i}} {\varepsilon} \bigg)+\varphi_\varepsilon$.

Note that in\ \cite{25}, they didn't consider the existence of multi-peak solutions concentrating at the same point ($a_1=\cdots=a_k$) to equation\ (\ref{18}). When\ $a_1=a_2=\cdots=a_k$, $|y_{\varepsilon,i}-y_{\varepsilon,j} |\to 0\ (i\not=j)$ as $\varepsilon\to 0$, but we cannot get the result of $|y_{\varepsilon,i}-y_{\varepsilon,j} |/\varepsilon \to +\infty\ (i\not=j)$ as $\varepsilon\to 0$. Therefore, we must impose additional conditions on\ $y_{\varepsilon,i}$.

Recently, Cui et al. \cite{32} proved the existence and local uniqueness of normalized multi-peak solutions to the following Kirchhoff equation
\begin{equation}
\begin{cases}
-\left(a+b_\lambda \int_{\mathbb R^3}|\nabla u_\lambda|^2\right)\Delta u_\lambda +\left(\lambda+V(x)\right)u_\lambda =\beta_\lambda u_\lambda^p,\ u_{\lambda}>0,\ x\in {{\R}^{3}},\\
u_{\lambda}\in{H^{1}{\left( {\R^3} \right)}},
\end{cases}
\end{equation}
where $a>0$,\ $1<p<5$,\ $\lambda,\ b_\lambda,\ \beta_\lambda>0$ are parameters, $\int_{\mathbb R^3} u_\lambda^2=1$, and $V(x):{\R}^{3}\rightarrow {\R}$ is  a bounded continuous function.

Inspired by the literatures \cite{24,23,25,32}, we apply the conditions of\ $Q(x)$ in\ \cite{24} to\ ${\R}^{3}$. Then, we consider the existence of positive multi-peak solutions to the equation\ (\ref{11}) concentrating at a critical point of $Q(x)$.

Now we give some definitions and assumptions.

\begin{defi}
Let\ $m\in N_+$,\ $a_0\in {\R}^{3}$, we say that\ $u_\varepsilon$ is a m-peak solution of\ (\ref{11}) if\ $u_\varepsilon$ satisfies

$(i)$  $u_\varepsilon$ has\ $m$ local maximum points\ $y_{\varepsilon,i}\in {\R}^{3},\ i=1,\ldots,m$, satisfying
\begin{equation*}
  y_{\varepsilon,i}\to a_0
\end{equation*}
as $\varepsilon\to 0$ for each $i$;

$(ii)$ For any given\ $\tau>0$, there exists\ $R\gg 1$, such that
\begin{equation*}
  |u_\varepsilon(x)|\le\tau,\quad x\in\mathbb R^3\setminus \cup_{i=1}^mB_{R\varepsilon}(y_{\varepsilon,i});
\end{equation*}

$(iii)$ There exists\ $C>0$ such that
\begin{equation*}
  \int_{\mathbb R^3}\left( \varepsilon^2a|\nabla u_\varepsilon|^2+u_\varepsilon^2\right)\le C\varepsilon^3.
\end{equation*}
\end{defi}

We assume that $Q(x)$ satisfies the following conditions:

$(Q_1)$ $Q(x)$ is a smooth bounded function in\ $\mathbb{R}^3$.

$(Q_2)$ $x_0=0 $ is a critical point of\ $Q(x)$.

$(Q_3)$ $Q(x)$ has the following expansion (after suitably rotating the coordinate system)
\begin{equation}\label{111}
  Q(x)=Q(0)+P_1(x')-P_2(x'')+R(x),\quad x\in B_{\delta}(0),
\end{equation}
where\ $Q(0)>0 $, $\delta>0 $, $x=(x',x'') $, $x'=(x_1,...,x_t) $, $x''=(x_{t+1},...,x_3) $, $t\in \{1,2,3\} $,\ $P_1$ and $P_2$ satisfy
\begin{equation}\label{112}
  P_1(x')=\lambda|x'|^{h_1},\quad |x'|\le \delta,
\end{equation}
\begin{equation}\label{114}
  \langle DP_2(x''),x'' \rangle \ge \lambda|x''|^{h},\quad |x''|\le \delta,
\end{equation}
\begin{equation}\label{115}
  |D^mP_2(x'')|=O(|x''|^{h-m}), \quad m=0,...,[h], \quad |x''|\le \delta,
\end{equation}
for some\ $h_1\ge h\ge 2$ and some positive constant $\lambda>0$, and
\begin{equation}\label{116}
R(x)=O(|x|^{h_1+\sigma}),\ \text{for\ some}\ \sigma>0\ as\ |x|\to 0.
\end{equation}
\begin{rema}
$Q(x)=\sin|x|^2+1,\ x\in \mathbb R^3,$ satisfies the conditions\ $(Q_1)$--$(Q_3)$.
\end{rema}

Assume\ $u,v\in H^1(\mathbb R^3)$, denote
\begin{gather*}
u_{\varepsilon,y}(x)=u\bigg(\frac{x-y}{\varepsilon}\bigg),\ y\in \mathbb R^3,\\
  \langle u,v \rangle_\varepsilon=\int_{\mathbb R^3} \left(\varepsilon^2a\nabla u\nabla v+uv\right),\quad \|u\|_{\varepsilon}=  \langle u,u\rangle_\varepsilon^{ 1/2},\\
  H_\varepsilon=\{u\in H^1(\mathbb R^3):\|u\|_{\varepsilon}<+\infty  \}.
\end{gather*}
The energy functional corresponding to equation\ (\ref{11}) is
\begin{equation}\label{117}
  I_\varepsilon(u)=\frac 12 \|u\|_{\varepsilon}^2+\frac{\varepsilon b}{4}\bigg( \int_{\mathbb R^3}|\nabla u|^2 \bigg)^2-\frac 1q \int_{\mathbb R^3} Q(x)u^q_+,\quad u\in H_\varepsilon,
\end{equation}
where\ $u_+=\max(u,0)$. It is standard to verify that $I_\varepsilon\in C^2( H_\varepsilon)$. So we just need to find a critical point of $I_\varepsilon\in C^2( H_\varepsilon)$.

For\ $k\in N_+$, let $w$ be the unique positive radial solution (see Lemma\ \ref{lm2.2}) to equation
\begin{equation}\label{118}
 - \left( {a + bk\int_{{\mathbb{R}^3}} {{{\left| {\nabla u} \right|}^{^2}}} } \right)\Delta u + u = Q\left( {0} \right) u ^{q - 1}.
\end{equation}
Then we want to construct k-peak solutions to equation\ (\ref{11}) concentrating at the critical point $x_0=0$ of $Q(x)$ by using the uniqueness and nondegeneracy property of $w$.

Our main results are as follows.
\begin{theo}\label{thm1.1}
Assume that $Q(x)$ satisfies\ $(Q_1)$--$(Q_3)$. Then, for any\ $k\in N_+$, there exists\ $\varepsilon_0=\varepsilon(k)$ such that for\ $\varepsilon\in(0,\varepsilon_0]$, equation\ (\ref{11}) has at least one k-peak solution of the form
\[u_\varepsilon=\sum_{i=1}^k w_{\varepsilon,y_{\varepsilon,i}}+\varphi_\varepsilon,   \]
where\ $y_{\varepsilon,i}\in\mathbb{R}^3,\ i=1,\ldots,k,\ \varphi_\varepsilon\in H^1(\mathbb{R}^3)$, and as\ $\varepsilon\to 0$,
\begin{gather*}
 y_{\varepsilon,i}\to 0,\ i=1,\ldots,k, \\
  \frac{|y_{\varepsilon,i}-y_{\varepsilon,j}|}{\varepsilon}\to +\infty,\ i\not= j,\ i,j=1,\ldots,k,\\
  \left\|\varphi_\varepsilon \right \|_\varepsilon=o(\varepsilon^{\frac 32}).
\end{gather*}
\end{theo}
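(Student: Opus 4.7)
The plan is to apply the Lyapunov--Schmidt reduction, following the framework of Luo--Peng--Wang~\cite{25} but now working at a common concentration point $0$ with the saddle structure imported from Dancer--Yan~\cite{24}. Fix $k\in\mathbb N_+$ and let $w$ be the unique positive radial solution of \eqref{118}, whose nondegeneracy is provided by \cite{23}. As ansatz I take
\[
u_{\varepsilon}=W_{\varepsilon,y}+\varphi_{\varepsilon},\qquad W_{\varepsilon,y}:=\sum_{i=1}^{k}w_{\varepsilon,y_{i}},
\]
with $\varphi_{\varepsilon}$ belonging to the $\langle\cdot,\cdot\rangle_{\varepsilon}$--orthogonal complement of the approximate kernel $E_{\varepsilon,y}=\operatorname{span}\{\partial_{y_{i}^{\ell}}w_{\varepsilon,y_{i}}:1\le i\le k,\ 1\le\ell\le3\}$, and with $y=(y_{1},\dots,y_{k})$ ranging in a configuration set
\[
\mathcal D_{\varepsilon}=\Bigl\{y\in(\mathbb R^{3})^{k}:\ y_{i}\in B_{\varepsilon^{\alpha}}(0),\ \frac{|y_{i}-y_{j}|}{\varepsilon}\ge\varepsilon^{-\beta}\ \text{for } i\ne j\Bigr\}
\]
for small exponents $0<\beta<1-\alpha$, chosen so that $y_{i}\to 0$ and yet $|y_{i}-y_{j}|/\varepsilon\to\infty$.

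The first step is to solve the auxiliary equation $\Pi_{E_{\varepsilon,y}^{\perp}}I'_{\varepsilon}(W_{\varepsilon,y}+\varphi)=0$ by a contraction mapping. The linearization $L_{\varepsilon,y}$ of $I'_{\varepsilon}$ at $W_{\varepsilon,y}$ picks up a nonlocal Kirchhoff contribution of the form $2b(\int\nabla W_{\varepsilon,y}\cdot\nabla\varphi)\,\Delta W_{\varepsilon,y}$; combined with the individual nondegenerate operators translated to the centers $y_{i}$, and using that all cross terms $\int\nabla w_{\varepsilon,y_{i}}\cdot\nabla w_{\varepsilon,y_{j}}$ and products $w_{\varepsilon,y_{i}}w_{\varepsilon,y_{j}}$ are exponentially small under the separation imposed by $\mathcal D_{\varepsilon}$, one obtains uniform invertibility of $L_{\varepsilon,y}$ on $E_{\varepsilon,y}^{\perp}$ with operator norm controlled by $C\varepsilon^{-3/2}$. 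The residual $I'_{\varepsilon}(W_{\varepsilon,y})$ has size $o(\varepsilon^{3})$ thanks to the expansion of $Q$ at $0$ given by $(Q_{3})$ and the exponential decay of $w$, and the nonlinear remainder is quadratic in $\varphi$; the contraction then yields a unique $\varphi_{\varepsilon}=\varphi_{\varepsilon}(y)\in E_{\varepsilon,y}^{\perp}$ with $\|\varphi_{\varepsilon}\|_{\varepsilon}=o(\varepsilon^{3/2})$, uniformly in $y\in\mathcal D_{\varepsilon}$, and depending smoothly on $y$.

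The second step is to reduce to the finite-dimensional functional $F_{\varepsilon}(y):=I_{\varepsilon}(W_{\varepsilon,y}+\varphi_{\varepsilon}(y))$ on $\mathcal D_{\varepsilon}$. Using \eqref{111}--\eqref{116}, the Pohozaev-type identities for $w$, and the exponentially small interactions, I expect an expansion of the form
\[
F_{\varepsilon}(y)=k\,A_{0}\varepsilon^{3}+\varepsilon^{3}\sum_{i=1}^{k}\bigl[c_{1}P_{1}(y_{i}')-c_{2}P_{2}(y_{i}'')\bigr]+\varepsilon^{3}\,o\!\left(\textstyle\sum_{i}|y_{i}|^{h_{1}}+\sum_{i}|y_{i}|^{h}\right)+\text{(exp.\ small)},
\]
where $A_{0},c_{1},c_{2}$ are explicit positive constants depending on $w$ and $Q(0)$. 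Any critical point of $F_{\varepsilon}$ in the interior of $\mathcal D_{\varepsilon}$ produces a critical point of $I_{\varepsilon}$, hence a $k$-peak solution of \eqref{11} of the stated form.

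The third and most delicate step is to locate a critical point of $F_{\varepsilon}$ inside $\mathcal D_{\varepsilon}$ by a max--min argument adapted from \cite{24}. The principal part $\sum_{i}[c_{1}P_{1}(y_{i}')-c_{2}P_{2}(y_{i}'')]$ has a natural saddle shape (convex in $y'$, ``concave'' in $y''$ in the integrated sense provided by \eqref{114}--\eqref{115}), so one minimizes in $y'$ and maximizes in $y''$ (or the reverse, depending on $t$) over a well-chosen compact subset of $\mathcal D_{\varepsilon}$. The main obstacle is to show that the approximate min--max value is attained in the \emph{interior} of $\mathcal D_{\varepsilon}$: this requires proving that the two boundary mechanisms---peaks escaping the ball $B_{\varepsilon^{\alpha}}(0)$ and peaks collapsing to $|y_{i}-y_{j}|/\varepsilon=\varepsilon^{-\beta}$---are strictly unfavorable for the min--max value. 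The first is handled because $P_{1},P_{2}$ grow at a rate faster than the error $o(|y|^{h_{1}}+|y|^{h})$; the second, which is peculiar to the nonlocal problem, requires a careful comparison showing that the attractive/repulsive peak interaction generated by the Kirchhoff coupling $b(\int|\nabla W|^{2})\Delta W$ does not produce a drift of order matching $P_{1},P_{2}$. Once these strict inequalities on the boundary of $\mathcal D_{\varepsilon}$ are established, a standard deformation argument yields the required interior critical point and completes the proof.
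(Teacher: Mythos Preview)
Your Lyapunov--Schmidt step is essentially the same as the paper's (modulo cosmetics: the inverse of $L_{\varepsilon,y}$ on $E_{\varepsilon,y}^{\perp}$ has norm $O(1)$, not $C\varepsilon^{-3/2}$, and the residual $I'_\varepsilon(W_{\varepsilon,y})$ is $o(\varepsilon^{3/2})$ as a functional, not $o(\varepsilon^{3})$). The real issue is in Step~3, and it is a genuine gap rather than a matter of detail.

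Your configuration set imposes $|y_i-y_j|/\varepsilon\ge\varepsilon^{-\beta}$. At that scale every peak interaction term --- both the nonlinear coupling $\int w_{\varepsilon,y_i}\bigl(\sum_{j\ne i}w_{\varepsilon,y_j}\bigr)^{q-1}$ and the Kirchhoff cross terms $\int\nabla w_{\varepsilon,y_i}\cdot\nabla w_{\varepsilon,y_j}$ --- is of order $e^{-c\varepsilon^{-\beta}}$, hence smaller than any power of $\varepsilon$. Your reduced functional therefore decouples to leading order:
\[
F_\varepsilon(y)=kA_0\varepsilon^{3}+\varepsilon^{3}\sum_{i=1}^{k}g(y_i)+O\!\bigl(\varepsilon^{3}e^{-c\varepsilon^{-\beta}}\bigr),\qquad g(y)=-c_1P_1(y')+c_2P_2(y'')+o(|y|^{h}).
\]
(Note the sign: the energy \emph{decreases} in the $P_1$-direction and \emph{increases} in the $P_2$-direction, cf.\ the paper's \eqref{45}--\eqref{46}.) But $g$ has a unique critical point at $0$, so the only interior critical configuration of $\sum_i g(y_i)$ is $y_1=\cdots=y_k=0$, which lies on the collapse boundary you are trying to exclude. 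The interaction gradient, of size $\varepsilon^{-1}e^{-c\varepsilon^{-\beta}}$, is far too small to balance $\nabla g(y_i)$ anywhere in $\mathcal D_\varepsilon$, so there is no interior critical point at all. In other words, by making the separation too large you have thrown away the only force that keeps peaks apart; the statement ``the collapse boundary is strictly unfavorable for the min--max value'' cannot be established with these scalings.

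The paper repairs this by a different choice of both the domain and the variational mechanism. First, the separation threshold is only logarithmic, $\eta|y_i-y_j|/\varepsilon\ge R_1$ with $R_1\sim\alpha h_1\log(1/\varepsilon)$, so that on the collapse boundary the interaction is of order $\varepsilon^{\alpha h_1}$, \emph{comparable} to the $P_1$- and $P_2$-contributions; its negative sign is then exactly what drives $K(Y)$ below the lower level $c_{\varepsilon,1}$ (Lemma~\ref{lm4.1}, Step~1). Second, the domain is \emph{anisotropic}: the $y'$-components range over a fixed ball $B_\delta^t(0)$ while only the $y''$-components are confined to $B_{\varepsilon^{\alpha}}^{3-t}(0)$. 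This gives room for the topological input $H^t(B_\delta^t,\partial B_\delta^t)\ne 0$, which via Lemma~\ref{lm4.2} yields $H_*(M^k,L_2)\ne 0$ and hence that the full sublevel set $K^{c_{\varepsilon,2}}=\Omega_{\varepsilon^\alpha}$ cannot be deformed into $K^{c_{\varepsilon,1}}$. The critical point is then produced by the level-set/relative-category criterion of Lemma~\ref{lm2.6}, after verifying (Lemma~\ref{lm4.1}) that on $\partial\Omega_{\varepsilon^\alpha}$ one has either $K<c_{\varepsilon,1}$ or $\partial_n K>0$. This is not a ``standard deformation argument'' layered on top of a naive min--max in $y'$ versus $y''$; the interaction term is an essential, not incidental, ingredient.
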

\begin{rema}
Theorem\ \ref{thm1.1} extend the result got by Dancer and Yan\ \cite{24} about the existence of solutions for the nonlinear Schr\"odinger equation to the nonlinear Kirchhoff equation\ (\ref{11}).
\end{rema}

In this paper, we prove Theorem\ \ref{thm1.1} by using Lyapunov-Schmidt reduction method. Since there is a nonlocal term, we encounter some new difficulties which involve some complicated and technical estimates. To our knowledge, the result we obtain is new.
%We will find the critical point of $I_\varepsilon$ of the form\ $\sum_{i=1}^k w_{\varepsilon,y_{\varepsilon,i}}+\varphi_\varepsilon$. By Lemma\ \ref{lm3.7},\ $\sum_{i=1}^k w_{\varepsilon,y_{\varepsilon,i}}+\varphi_\varepsilon$ is a critical point of \ $I_\varepsilon$ and\ $(y_{\varepsilon,1},\ldots,y_{\varepsilon,k},\varphi_\varepsilon)$ is a critical point of\ $J_\varepsilon(y_1,\ldots,y_k,\varphi)\equiv I_\varepsilon(\sum_{i=1}^kw_{\varepsilon,y_i}+\varphi)$ is equivalent. Therefore we just need to find the critical point of\ $J_\varepsilon$. We will prove it as the following steps:

%Step 1: We prove that there exists a $C^1$-mapping\ $(y_1,\ldots,y_k)\mapsto \varphi_{\varepsilon,y_1,\ldots,y_k}$, such that\ $\varphi_{\varepsilon,y_1,\ldots,y_k}$ is a critical point of\ $J_\varepsilon(y_1,\ldots,y_k,\cdot)$ for $\varepsilon$ sufficiently small;

%Step 2: We prove that\ $K(Y)\equiv J_\varepsilon(y_1,\ldots,y_k,\varphi_{\varepsilon,y_1,\ldots,y_k})$ has a critical point\ $(y_{\varepsilon,1},\ldots,y_{\varepsilon,k})$ for\ $\varepsilon$ sufficiently small. So\ $(y_{\varepsilon,1},\ldots,y_{\varepsilon,k},\varphi_{\varepsilon,y_{\varepsilon,1},\ldots,y_{\varepsilon,k}})$ is a critical point of\ $J_\varepsilon$.

Our notations are standard. We use $B_r(x)$ (and $\overline{ B_r(x)}$) to denote open (and close) balls in $\mathbb R^3$ centred at $x$ with radius $r$, and $B_r^c(x)$ to denote the complementary set of\ $B_r(x)$ in\ $\mathbb R^3$. Unless otherwise stated, we write $\int u$  to denote Lebesgue integrals over\ $\mathbb R^3$, and $\|u\|_{L^p}$, $\|u\|_{H^1}$ to mean\ $L^p$-norm, $H^1$-norm respectively. We will use $C,C_j(j\in N)$ to denote various positive constants, and $O(t)$, $o(t)$, $o_t(1)$,\ $o_R(1)$  to mean $|O(t)|\le C|t|$, $o(t)/t\to 0$, $o_t(1)\to 0$ as $t\to 0$ and $o_R(1)\to 0$ as\ $R\to +\infty$ respectively.

This paper is organized as follows. In section 2, we recall some definitions and lemmas. In section 3, we give the finite dimensional reduction process and in section 4, we prove Theorem\ \ref{thm1.1}.

%\medskip

\section{ Some preliminaries}
In this section, we introduce some preliminaries.
\begin{lem}\label{lm2.1}(\ \cite{25}, Lemma\ 2.1)
 For any\ $2\le q\le 6$, there exists a constant\ $C>0$ independent of\ $\varepsilon$, such that
 \begin{equation}\label{21}
  {\left\| u \right\|_{{L^q}}} \le C{\varepsilon ^{\frac{3}{q} - \frac{3}{2}}}{\left\| u \right\|_\varepsilon },\ \forall \ u\in H_\varepsilon.
 \end{equation}
\end{lem}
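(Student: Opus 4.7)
The plan is to reduce this weighted inequality to the standard Sobolev embedding $H^1(\mathbb{R}^3)\hookrightarrow L^q(\mathbb{R}^3)$ (valid for $2\le q\le 6$) by a single dilation that absorbs every $\varepsilon$. First I would introduce the rescaled function $v(y):=u(\varepsilon y)$ and record how each of the three quantities $\|u\|_{L^q}$, $\int \varepsilon^2 a|\nabla u|^2$, and $\int u^2$ transforms under this change of variables. The Jacobian contributes a factor $\varepsilon^3$, and the chain rule gives $\nabla u(\varepsilon y)=\varepsilon^{-1}\nabla v(y)$; the two factors interact so that the weight $\varepsilon^2 a$ in front of $|\nabla u|^2$ is exactly what is needed to match the $L^2$ piece after rescaling.

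Tracking these weights yields $\|u\|_{L^q}=\varepsilon^{3/q}\|v\|_{L^q}$ and $\|u\|_\varepsilon^2=\varepsilon^{3}\bigl(a\|\nabla v\|_{L^2}^2+\|v\|_{L^2}^2\bigr)$, so $\|u\|_\varepsilon$ is comparable to $\varepsilon^{3/2}\|v\|_{H^1}$ with a constant depending only on $a$. The classical Sobolev inequality supplies $\|v\|_{L^q}\le C(q)\|v\|_{H^1}$ for $2\le q\le 6$, and substituting the two scaling identities back into this estimate recovers the claimed bound with exponent $3/q-3/2$, which is precisely the exponent forced by dimensional balance.

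There is no real obstacle here: the exponent is pinned down by the scaling, and the only nontrivial ingredient is a standard embedding theorem. One small point to remark is that the statement is made for every $u\in H_\varepsilon$, but since on $\mathbb{R}^3$ the norms $\|\cdot\|_\varepsilon$ and $\|\cdot\|_{H^1}$ are equivalent for each fixed $\varepsilon>0$, the set $H_\varepsilon$ coincides with $H^1(\mathbb{R}^3)$, so the Sobolev embedding applies without any density argument.
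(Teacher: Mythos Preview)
Your argument is correct and is exactly the standard scaling proof: the substitution $v(y)=u(\varepsilon y)$ converts $\|\cdot\|_\varepsilon$ into $\varepsilon^{3/2}$ times a norm equivalent to $\|\cdot\|_{H^1}$, and the Sobolev embedding $H^1(\mathbb{R}^3)\hookrightarrow L^q(\mathbb{R}^3)$ for $2\le q\le 6$ does the rest. The paper does not supply its own proof of this lemma---it simply quotes the result from \cite{25}---so there is nothing to compare beyond noting that your rescaling argument is precisely the expected one.
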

%\noindent
%\begin{proof}
%Denote\ $\tilde{u}(x)=u(\varepsilon x)$, then
%\begin{equation}\label{22}
%  \int_{\mathbb{R}^3} |u^q(y)|\,\mathrm{d}y = \int_{\mathbb{R}^3} |u^q(\varepsilon x)|\varepsilon ^3\,\mathrm{d}x = \varepsilon ^3\int_{\mathbb{R}^3} |\tilde{u}^q(x)|  \,\mathrm{d}x.
%\end{equation}
%Since\ $H^1(\mathbb{R}^3)\hookrightarrow L^q(\mathbb{R}^3)$,
%\begin{equation*}
%  \|\tilde u\|_{L^q}\le C\|\tilde u\|_{H^1}.
%\end{equation*}
%So
%\begin{align}\label{23}
%  \int_{\mathbb{R}^3}|\tilde u^q(x)|\,\mathrm{d}x &\le C\bigg( \int_{\mathbb{R}^3}|\nabla_x\tilde{u}(x)|^2+\tilde {u}^2(x) \,\mathrm{d}x \bigg)^{\frac q2} \nonumber\\
%   &= C\bigg(\int_{\mathbb{R}^3}\Big(|\nabla_x\tilde{u}\big(\frac{z}{\varepsilon}\big)|^2+\tilde {u}^2\big(\frac{z}{\varepsilon}\big)\Big)\frac {1}{\varepsilon^3}\,\mathrm{d}z \bigg)^{\frac q2}\nonumber\\
%   &=C\varepsilon^{-\frac 32q}\bigg( \int_{\mathbb{R}^3}|\nabla_z u(z)|^2\varepsilon^2+u^2(z) \,\mathrm{d}z \bigg)^{\frac q2}\nonumber\\
%   &\le C\varepsilon^{-\frac 32q}\bigg(\int_{\mathbb{R}^3}\big(a\varepsilon^2|\nabla u|^2+u^2\big)\bigg)^{\frac q2}\nonumber\\
%   &=C\varepsilon^{-\frac 32q}\|u \|^q_\varepsilon.
%\end{align}
%Combining\ (\ref{22}) and (\ref{23}) yields\ (\ref{21}).
%\end{proof}

Before stating the lemma that follows, we first give a truth that $U$ is a unique positive radial solution to equation
\begin{align}\label{24}\left\{
                \begin{array}{ll}
                    -\Delta u+u = u^{q-1},\ x\in\mathbb R^3,\\
                  u  \in   H^1(\mathbb {R}^3), \\
                   u(0)=\underset{x\in\mathbb R^3}{\max}\ u(x),
                \end{array}
              \right.
\end{align}
which satisfies
\begin{align*}\left\{
                \begin{array}{ll}
                  \mathop{\lim }\limits_{|x| \to \infty }|x|e^{|x|}U(|x|)=C>0, &  \\
                  \mathop{\lim }\limits_{|x| \to \infty }\frac{U'(|x|)}{U(|x|)}=-1. &
                \end{array}
              \right.
\end{align*}

\begin{lem}\label{lm2.2}(\ \cite{23}, Theorem\ 1.2)
For any\ $k\in N_+$, $a,b>0,\ 2<q<6$, and $Q(x)$ satisfying\ $(Q_1)$--$(Q_3)$, there exists a unique positive radial solution\ $w\in H^1(\mathbb R^3)$ satisfying
\begin{equation}\label{25}
  -\left(a+bk\int_{\mathbb{R}^3}|\nabla u |^2\right)\Delta u+u=Q(0)u^{q-1}.
\end{equation}
Moreover,\ $w$ is nondegenerate in $H^1(\mathbb R^3)$ in the sense that there holds
\begin{equation*}
 \mathrm{Ker}\, \mathcal L= \mathrm{span} \Big\{\frac{\partial w}{\partial x_1},\frac{\partial w}{\partial x_2},\frac{\partial w}{\partial x_3}\Big\},
\end{equation*}
where\ $ \mathcal L:H^1(\mathbb R^3)\to H^1(\mathbb R^3)$ is the linear operator defined as
\begin{equation}
    \mathcal L\varphi=-\left(a+bk\int|\nabla w|^2\right)\Delta \varphi-2bk\bigg(\int\nabla w\nabla\varphi\bigg)\Delta w+\varphi-(q-1)Q(0)w^{q-2}\varphi, \forall \varphi\in H^1(\mathbb R^3).
  \end{equation}
\end{lem}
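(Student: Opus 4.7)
The strategy is to reduce the nonlocal equation \eqref{25} to Kwong's classical equation \eqref{24} by an explicit scaling and then to exploit the known uniqueness and radial nondegeneracy of $U$.

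For existence and uniqueness, I would seek $w$ in the form $w(x)=\alpha U(\beta x)$ with $\alpha,\beta>0$ to be determined. Using $\int|\nabla w|^2=\alpha^2\beta^{-1}\int|\nabla U|^2$ and substituting into \eqref{25}, the ansatz solves \eqref{25} iff $Q(0)\alpha^{q-2}=1$ and $a\beta^2+\bigl(bk\alpha^2\int|\nabla U|^2\bigr)\beta-1=0$. The first equation fixes $\alpha=Q(0)^{-1/(q-2)}$, and the second is a quadratic in $\beta$ with a unique positive root, yielding existence. For uniqueness of any positive radial solution $\tilde w$, set $t:=\int|\nabla\tilde w|^2$; then $\tilde w$ solves the linear-coefficient equation $-(a+bkt)\Delta\tilde w+\tilde w=Q(0)\tilde w^{q-1}$, a further rescaling reduces it to \eqref{24}, Kwong's theorem forces $\tilde w$ to have the above form, and the resulting self-consistency equation in $t$ has exactly one positive solution (its right-hand side is strictly concave in $t$ and strictly positive at $t=0$).

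For nondegeneracy, let $\varphi\in\ker\mathcal L$ and decompose $\varphi=\sum_{\ell\ge 0}\varphi_\ell$ into spherical harmonic sectors. Since $w$ is radial, $\nabla w$ is a purely radial vector field, so for each $\ell\ge 1$ the orthogonality of spherical harmonics yields $\int\nabla w\cdot\nabla\varphi_\ell=0$. Thus on the $\ell\ge 1$ sectors the nonlocal term in $\mathcal L$ drops out and $\mathcal L\varphi_\ell=0$ reduces, via the scaling $w(x)=\alpha U(\beta x)$, to the classical linearized equation $-\Delta\phi+\phi-(q-1)U^{q-2}\phi=0$; its kernel is $\mathrm{span}\{\partial_{x_i}U\}$ in the $\ell=1$ sector and trivial for $\ell\ge 2$. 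Unscaling shows that the $\ell\ge 1$ contribution to $\ker\mathcal L$ is exactly $\mathrm{span}\{\partial_{x_1}w,\partial_{x_2}w,\partial_{x_3}w\}$.

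The main obstacle is the radial sector $\ell=0$, where the nonlocal correction survives. Setting $L_0:=-(a+bk\int|\nabla w|^2)\Delta+1-(q-1)Q(0)w^{q-2}$ and $c:=\int\nabla w\cdot\nabla\varphi_0$, the equation $\mathcal L\varphi_0=0$ becomes $L_0\varphi_0=2bkc\,\Delta w$. Since $L_0$ is invertible on radial $H^1(\mathbb{R}^3)$ by the classical radial nondegeneracy of $U$, one gets $\varphi_0=2bkc\,L_0^{-1}(\Delta w)$, and pairing with $\nabla w$ together with an integration by parts produces the scalar identity $c\bigl(1+2bk\langle\Delta w,L_0^{-1}\Delta w\rangle\bigr)=0$. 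The delicate point, which uses the explicit form $w=\alpha U(\beta\cdot)$ and a Pohozaev-type identity applied to $U$, is to verify that this coefficient does not vanish, forcing $c=0$ and hence $\varphi_0\equiv 0$. Combining the three sectors yields $\ker\mathcal L=\mathrm{span}\{\partial_{x_1}w,\partial_{x_2}w,\partial_{x_3}w\}$.
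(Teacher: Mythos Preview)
Your uniqueness argument is essentially the paper's: both freeze the nonlocal coefficient, rescale to Kwong's equation~\eqref{24}, and show that the resulting self-consistency relation has a unique positive root. The paper parametrizes by $c_k=a+bk\int|\nabla u|^2$ and solves the quadratic $c_k=a+\tilde b\sqrt{c_k}$; your quadratic in $\beta$ (or your fixed-point equation in $t$) is the same relation in different coordinates.

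The paper does \emph{not} prove nondegeneracy here---it simply defers to \cite{23}---so your spherical-harmonic sketch goes beyond what the paper supplies. Your decomposition is correct: the nonlocal term contributes only through the scalar $\int\nabla w\cdot\nabla\varphi$, which picks out the radial part, so the $\ell\ge 1$ sectors reduce to the classical linearization about $U$ and give exactly $\mathrm{span}\{\partial_{x_i}w\}$. The ``delicate point'' you flag in the radial sector can be made fully explicit. From $L_U\bigl(\tfrac{2}{q-2}U+x\cdot\nabla U\bigr)=-2U$ and $L_U U=-(q-2)U^{q-1}$ one gets $L_U^{-1}(\Delta U)=-\tfrac12\,x\cdot\nabla U$; transferring through the scaling $w=\alpha U(\beta\cdot)$ yields
\[
\langle \Delta w,\,L_0^{-1}\Delta w\rangle_{L^2}=-\tfrac{\beta^2}{4}\int|\nabla w|^2,
\]
so with $\beta^2=1/c_k$ and $bk\int|\nabla w|^2=c_k-a$ the scalar coefficient becomes $1-\tfrac{c_k-a}{2c_k}=\tfrac{c_k+a}{2c_k}>0$. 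Hence $c=0$, $\varphi_0\equiv 0$, and your outline closes without any Pohozaev miracle beyond the standard identity $\int\Delta U\,(x\cdot\nabla U)=\tfrac12\|\nabla U\|_{L^2}^2$ in $\mathbb{R}^3$.
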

\noindent
\begin{proof} Assume\ $u$ is an arbitrary positive solution to equation (\ref{25}). Write
\begin{equation}\label{27}
  c_k=a+bk\int|\nabla u|^2.
\end{equation}
Let\ $\tilde {u}(x)=(Q(0))^{\frac{1}{q-2}}u(\sqrt{c_k}x)$, then\ $\tilde{u}(x)$ satisfies
\begin{equation*}
  -\Delta{\tilde {u}}(x)+{\tilde {u}}(x)={\tilde {u}}^{q-1}(x),
\end{equation*}
in the sense that\ ${\tilde {u}}(x)$ solves equation\ (\ref{24}). Thus, by the uniqueness of $U$ we have
\begin{equation*}
 {\tilde {u}}(x)=U(x-z),
\end{equation*}
for some $z\in \mathbb R^3$. The relationship between $u$ and\ $\tilde u$ implies that
\begin{equation}\label{28}
 u(x)=(Q(0))^{-\frac{1}{q-2}}U\bigg(\frac{1}{\sqrt{c_k}}x-z\bigg)=:\lambda_1 U\bigg(\frac{1}{\sqrt{c_k}}x-z\bigg).
\end{equation}
By\ (\ref{28}), we obtain
\begin{equation}\label{29}
 \int|\nabla u|^2=\lambda_1^2\sqrt{c_k}\int|\nabla U|^2.
\end{equation}
Combining\ (\ref{27}) and (\ref{29}) yields
\begin{equation}\label{210}
   c_{k}=a+bk\lambda_1^2\|\nabla U\|^2_{L^2}\sqrt{c_k}=:a+\tilde{b}\sqrt{c_k}.
\end{equation}
Since $c_{k}>0$, equation (\ref{210}) is uniquely solved by
\begin{equation}\label{211}
  \sqrt{c_k}=\frac {\tilde{b}+\sqrt {\tilde{b}^2+4a}}{2},
\end{equation}
which shows that $c_k$ is independent of the choice of positive solutions to equation\ (\ref{25}). Additionally, since equation\ (\ref{24}) has a unique positive radial solution,\ (\ref{28}) implies that equation\ (\ref{25}) has a unique positive radial solution\ $w$ when\ $z=0$.
\end{proof}

The above proof implies that\ $w(x)=\lambda_1 U(\eta x)$ where\ $\eta =\frac{1}{\sqrt{c_k}}$. And since\ $U$ decays exponentially at infinity, we infer that
\begin{equation}\label{213}
 \nabla w(x),\ w(x)=O( \ |x|^{-1}e^{-\eta |x|})\ \text{as}\ |x|\to \infty.
\end{equation}

\begin{lem}\label{lm2.3}(\ \cite{1}, Lemma\ 3.7)
Assume\ $u,u':\mathbb R^n\rightarrow \mathbb R$ are positive radial continuous functions satisfying
\[    u(x)\sim |x|^ae^{-b|x|},\ u'(x)\sim |x|^{a'}e^{-b'|x|} \   (|x|\to \infty),                          \]
where\ $a,a'\in \mathbb R,\ b,b'>0$. Let $\xi\in \mathbb{R}^n$ tend to infinity and $u_\xi(x)=u(x-\xi)$. Then the following asymptotic estimates hold:

$(i)$ If\ $b<b'$, then
\[ \int_{\mathbb R^n} u_\xi u'\sim e^{-b|\xi|} |\xi|^a.                            \]
If\ $b>b'$, then replace\ $a$ and $b$ with $a'$ and $b'$.

$(ii)$ If\ $b=b'$, suppose, for simplicity, that $a\ge a'$, then
\begin{equation*}
  \int_{\mathbb R^n}u_\xi u'\sim
\left\{
     \begin{array}{ll}
       e^{-b|\xi|}|\xi|^{a+a'+\frac {n+1}{2}}, & \hbox{$a'>-\frac{n+1}{2}$;} \\
       e^{-b|\xi|}|\xi|^a \log|\xi|, & \hbox{$a'=-\frac{n+1}{2}$;} \\
       e^{-b|\xi|}|\xi|^a , & \hbox{$a'<-\frac{n+1}{2}$.}
     \end{array}
   \right.
\end{equation*}
\end{lem}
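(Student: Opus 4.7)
The plan is to analyze $\int u_\xi u' = \int u(x-\xi)u'(x)\,dx$ by exploiting the geometric identity $|x-\xi|+|x|\ge|\xi|$, with equality precisely on the segment from $0$ to $\xi$. First I would fix a large $R$ so that both $u$ and $u'$ are pinched between positive constant multiples of their asymptotic expressions on $\{|x|>R\}$; on the bounded region $\{|x|\le R\}$, the uniform expansion $|x-\xi|=|\xi|(1+O(1/|\xi|))$ gives $u_\xi(x)\sim|\xi|^a e^{-b|\xi|}$, so this piece alone already contributes a quantity of order $|\xi|^a e^{-b|\xi|}\int_{|x|\le R}u'$, with a symmetric statement on $\{|x-\xi|\le R\}$. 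The rest of the proof then controls the complementary region $\{|x|>R,\,|x-\xi|>R\}$ and shows that it either matches or dominates the boundary pieces in each regime.

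For part $(i)$ with $b<b'$, the pointwise inequality $b|x-\xi|+b'|x|=b(|x-\xi|+|x|)+(b'-b)|x|\ge b|\xi|+(b'-b)|x|$ factors out $e^{-b|\xi|}$ cleanly, leaving the convergent integral $\int|x-\xi|^a|x|^{a'}e^{-(b'-b)|x|}\,dx$. As $|\xi|\to\infty$ its mass is concentrated where $|x|=O(1)$ and $|x-\xi|\sim|\xi|$, so the integral scales like $|\xi|^a$ times a positive constant, matched above and below by the first-step estimate on $\{|x|\le R\}$. When instead $b>b'$, the change of variable $y=x-\xi$ swaps the roles of $u$ and $u'$, so the rate becomes $|\xi|^{a'}e^{-b'|\xi|}$, as stated.

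For part $(ii)$ with $b=b'$ no extra exponential is available, and one must perform a genuine Laplace-type analysis along the segment $\{s\hat\xi:0\le s\le|\xi|\}$. Writing $x=s\hat\xi+y$ with $y\perp\hat\xi$, the expansion
\[
|x-\xi|+|x|=|\xi|+\frac{|y|^2|\xi|}{2s(|\xi|-s)}+\text{h.o.t.}
\]
is valid for $|y|\ll\min(s,|\xi|-s)$; integrating out the $(n-1)$-dimensional Gaussian in $y$ produces a factor of order $(s(|\xi|-s)/|\xi|)^{(n-1)/2}$, and after the substitution $s=|\xi|t$ the remaining $s$-integral becomes $|\xi|^{a+a'+(n+1)/2}$ times the Beta-type integral $\int_0^1 t^{a'+(n-1)/2}(1-t)^{a+(n-1)/2}\,dt$. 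Convergence at $t=1$ follows from $a\ge a'$ in the relevant ranges, and the behavior at $t=0$ drives the trichotomy: when $a'>-(n+1)/2$ the integral is a positive constant and one obtains $|\xi|^{a+a'+(n+1)/2}$; when $a'=-(n+1)/2$ there is a logarithmic divergence at $t=0$ which, after a cutoff at $t\sim 1/|\xi|$, produces the $\log|\xi|$ factor and gives $|\xi|^a\log|\xi|$; and when $a'<-(n+1)/2$ the Laplace approximation breaks down near $s=0$ and the bounded-$|x|$ contribution from the first step, of size $|\xi|^a e^{-b|\xi|}$, becomes dominant. The main technical obstacle is to make the $y$-Gaussian approximation uniform across $0<s<|\xi|$ while controlling the boundary zones $s=O(1)$ and $|\xi|-s=O(1)$, where the pointwise asymptotics of $u$ or $u'$ fail but the full integrand remains integrable; matching these boundary contributions with the interior Laplace contribution is exactly what selects the correct leading-order behavior in each of the three regimes of $a'$.
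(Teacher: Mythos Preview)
The paper does not prove this lemma: it is quoted verbatim from \cite{1} (Ambrosetti--Colorado--Ruiz, Lemma~3.7) and used as a black box, so there is no in-paper argument to compare against.

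Your sketch is the standard route to this result and is essentially correct. A few small points worth tightening if you ever write it out in full: in part~(ii), the claim ``convergence at $t=1$ follows from $a\ge a'$'' is not quite the right justification---what you actually need at $t=1$ is $a>-(n+1)/2$, which holds automatically in the first regime since $a\ge a'>-(n+1)/2$, while in the other two regimes the $t\approx 1$ endpoint contributes a term of order $|\xi|^{a'}e^{-b|\xi|}$ by the symmetric version of your first-step estimate, and this is dominated by $|\xi|^a e^{-b|\xi|}$ because $a\ge a'$. Also, the statement that the Gaussian approximation in $y$ must be made ``uniform across $0<s<|\xi|$'' is exactly the delicate part you flag, and in practice one handles it by splitting off thin collars $s\in(0,R)$ and $s\in(|\xi|-R,|\xi|)$ (where the asymptotic form of $u'$ or $u$ fails but the functions are bounded) and checking that these pieces contribute at most the claimed order; this matches your boundary contributions from the first step and closes the argument.
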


Combining\ (\ref{213}) and Lemma\ \ref {lm2.3} yields, for any\ $r,s>0$ with $r\not= s$, we have, as\ $\varepsilon \to 0$,
\begin{equation}\label{214}
  \int_{\mathbb R^3}w_{\varepsilon,y_i}^r w_{\varepsilon,y_j}^s =O\Big( \varepsilon^3 e^{-\min\{r,s\}\frac{\eta|y_i-y_j|}{\varepsilon}}\Big|\frac{y_i-y_j}{\varepsilon}\Big|^{-\min\{r,s\}}\Big) ,
\end{equation}

\begin{equation}\label{215}
  \int_{\mathbb R^3}|\nabla w_{\varepsilon,y_i}  \nabla w_{\varepsilon,y_j} | =O\Big( \varepsilon e^{-\frac{\eta|y_i-y_j|}{\varepsilon}}\Big).
\end{equation}
Particularly, there exists some constants\ $C>0$, such that
\begin{equation}\label{216}
 \int_{\mathbb R^3}w_{\varepsilon,y_i}^rw_{\varepsilon,y_j}^s \le C  \varepsilon^3 e^{-\min\{r,s\}\frac{\eta|y_i-y_j|}{\varepsilon}},
\end{equation}
\begin{equation}\label{217}
  \int_{\mathbb R^3}|\nabla w_{\varepsilon,y_i}  \nabla w_{\varepsilon,y_j}|\le C \varepsilon e^{-\frac{\eta|y_i-y_j|}{\varepsilon}} .
\end{equation}

In the following sections, we will use inequality\ (\ref{21}), (\ref{216}) and (\ref{217}) repeatedly. When\ $r=s$, the situation is complicated. But when\ $r=s\ge1$, inequality (\ref{216}) still holds.

\begin{defi}\label{de2.4}(\ \cite{22}, Definition\ B.1)
Let $Y$ and $A$ be closed subsets of a topological space $X$. Then $Cat_X(A,Y)$ is the least integer $k$ such that\ $A=\mathop{\cup_{j=0}^k}A_j$, where, for $0\le j\le k$, $A_j$ is closed, and there exists\ $h_j\in C\big([0,1]\times A_j,X\big)$ such that

$(i)$  $ h_j(0,x)=x$ for $x\in A_j,\ 0\le j\le k$;

$(ii)$ $ h_0(1,x)\in Y$ for $x\in A_0$ and $h_0(t,x)=x$ for $x\in A_0\mathop{\cap}Y$ and $t\in [0,1]$;

$(iii)$ $ h_j(1,x)=x_j$ for $x\in A_j$ and some $ x_j\in X,\ 1\le j\le k$.\\
Particularly, if $Y$ is empty, then write\ $Cat_X(A)=Cat_X(A,\varnothing)$.
\end{defi}
%\begin{equation*}
 % k=Cat_X(A,Y),
%\end{equation*}
From the Definition \ref{de2.4}, we see $Cat_X(A,Y)\ge 1$  if $A$ can not be deformed into a subset of $Y$ within $X$.

\begin{lem}\label{lm2.5}(\ \cite{22}, Proposition\ 2.2)
Suppose that $F(x)$ is a $C^2$ function defined in a bounded domain\ $\Omega\subset \mathbb R^{3k}$. If $F$ satisfies either $F(x)>c$ or $\frac{\partial F(x)}{\partial n}>0$ at each $x\in \partial \Omega$, where $n$ is the outward unit normal of $\partial \Omega$ at $x$, then
 \begin{equation}
  \#\big\{x: DF(x)=0,\ x\in F^c\big\}\ge Cat_{F^c}(F^c),
 \end{equation}
where\ $F^c=\big\{x: x\in \Omega,\ F(x)\le c\big\}$. In particular, $F(x)$ has at least one critical point in\ $F^c$.
\end{lem}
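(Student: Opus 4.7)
The plan is to treat this as the classical Lusternik–Schnirelmann category estimate for $C^2$ functions on compact sets, where the boundary dichotomy of the hypothesis plays exactly the role that coercivity or flow-invariance plays in the textbook statement. The substance of the proof is (a) showing that the sublevel set $F^c$ is compact, (b) constructing a negative pseudo-gradient flow that preserves $F^c$, and (c) running the standard minimax/category argument.

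First I would verify compactness. Since $\Omega$ is bounded and $F$ is continuous, $F^c$ is closed and bounded in $\mathbb R^{3k}$. If $F^c$ is empty the conclusion is trivial, so assume $F^c\neq\varnothing$. Any point $x\in\partial\Omega\cap F^c$ satisfies $F(x)\le c$, and therefore by the dichotomy hypothesis satisfies $\partial F(x)/\partial n>0$, so $-\nabla F(x)$ has a strictly inward normal component. This is the key geometric input of the whole proof.

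Next I would construct a continuous deformation $\eta:[0,\infty)\times F^c\to F^c$ along which $F$ is non-increasing and whose stationary set is precisely the zero set of $DF$ in $F^c$. On the interior of $\Omega$ one takes the standard locally Lipschitz pseudo-gradient field $V$ associated to $-\nabla F$. Near the boundary portion $\partial\Omega\cap F^c$ (which is compact and disjoint from the critical set because $\partial_n F>0$ there), I would use a smooth cut-off on a collar neighborhood to interpolate $-\nabla F$ with a vector field that points strictly into $\Omega$; the inward-normal property noted above guarantees that this interpolation can be done while keeping $V$ non-vanishing and keeping $\langle V,\nabla F\rangle<0$. The associated flow is then defined for all positive times on $F^c$: an orbit cannot exit $\Omega$, since $V$ points strictly inward on $\partial\Omega\cap F^c$, and it cannot cross the level $\{F=c\}$ upward, since $F$ is non-increasing along the flow. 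Hence $F^c$ is forward-invariant.

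Finally I would run the Lusternik–Schnirelmann argument. Suppose for contradiction that $F$ has exactly $m$ critical points $x_1,\ldots,x_m$ in $F^c$ with $m<\mathrm{Cat}_{F^c}(F^c)$; by the compactness of $F^c$ and the $C^2$ regularity of $F$, this set is finite. Enumerate the critical values $v_1\le v_2\le\cdots\le v_m\le c$, and apply the standard deformation lemma across each critical level using the flow from the previous paragraph: between consecutive critical levels $F^{v_i+\varepsilon}$ deformation-retracts within $F^c$ onto $F^{v_i-\varepsilon}\cup U_i$, where $U_i$ is a closed contractible (in $F^c$) neighborhood of the critical points at level $v_i$. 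Iterating from the top level downward and absorbing the terminal sublevel (which retracts inside $F^c$ onto an arbitrary point of the lowest critical set) produces a covering of $F^c$ by $m$ closed sets each contractible in $F^c$, i.e.\ $\mathrm{Cat}_{F^c}(F^c)\le m$, contradicting the choice of $m$. The special assertion that $F$ has at least one critical point is the case $\mathrm{Cat}_{F^c}(F^c)\ge 1$, which holds as soon as $F^c$ is non-empty.

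The main obstacle is the construction in the second step: one must verify that the cut-off interpolation near $\partial\Omega\cap F^c$ can be done without creating spurious zeros of $V$ and while preserving the descent property $\langle V,\nabla F\rangle<0$. This is a routine but careful partition-of-unity argument, and it is the only place where the specific boundary hypothesis $\partial F/\partial n>0$ (versus $F>c$) is used; once this invariant descent flow is in hand, the deformation lemma and the category bound are the textbook ones and require no further ideas.
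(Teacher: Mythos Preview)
The paper does not supply its own proof of this lemma; it is quoted as Proposition~2.2 of reference~\cite{22} (Dancer--Lam--Yan) and used as a black box. So there is no in-paper argument to compare your proposal against.

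Your outline is the standard Lusternik--Schnirelmann argument and is sound. The boundary dichotomy is used exactly as you say: on $\partial\Omega\cap F^c$ the condition $\partial F/\partial n>0$ forces $-\nabla F$ to have a strictly inward normal component, so a suitably modified negative pseudo-gradient field can be made to point into $\Omega$ there, rendering $F^c$ forward-invariant under the flow; on the remainder of $\partial\Omega$ one has $F>c$, so orbits starting in $F^c$ (along which $F$ is non-increasing) cannot reach that part of the boundary. With the invariant descent flow in hand, the deformation lemma and the category lower bound are textbook, and your contradiction argument (finitely many critical points $\Rightarrow$ cover of $F^c$ by that many closed contractible sets) is the usual one. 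The technical point you flag---patching near $\partial\Omega\cap F^c$ without creating spurious zeros of the field---is indeed routine, since $\nabla F\neq 0$ on that set.
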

\noindent

\begin{lem}\label{lm2.6}(\ \cite{22}, Proposition\ 2.3)
Suppose that $F(x)$ is a $C^2$ function defined in a bounded domain\ $\Omega\subset \mathbb R^{3k}$. Let\ $c_1,\ c_2$ be two constants such that neither $c_2$ nor $c_1$ is a critical value of $F(x)$. If $F$ satisfies either\ $F(x)<c_1$ or\ $\frac{\partial F(x)}{\partial n}>0$ for each $x\in \partial \Omega$, then
 \begin{equation}
  \#\big\{x: DF(x)=0,\ x\in F^{c_2}\backslash F^{c_1}\big\}\ge Cat_{F^{c_2}}(F^{c_2},F^{c_1}).
 \end{equation}
In particular, if $F^{c_2}$ cannot be deformed into $F^{c_1}$, $F$ has at least one critical point in $F^{c_2}\backslash F^{c_1}$.
\end{lem}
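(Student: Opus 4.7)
The plan is to prove this as a standard Lusternik--Schnirelmann argument adapted to the relative category $\mathrm{Cat}_{F^{c_2}}(F^{c_2},F^{c_1})$, with the key twist being that the boundary hypothesis on $\partial\Omega$ is what makes the deformation well-defined. I would argue by contradiction: suppose the set $K := \{x \in F^{c_2}\setminus F^{c_1} : DF(x)=0\}$ consists of only $N$ points with $N < \mathrm{Cat}_{F^{c_2}}(F^{c_2},F^{c_1})$, and aim to produce a categorical covering of $F^{c_2}$ relative to $F^{c_1}$ with fewer than $\mathrm{Cat}_{F^{c_2}}(F^{c_2},F^{c_1})$ pieces, contradicting the definition.

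The construction proceeds in three steps. First, since $c_1,c_2$ are regular values and $F\in C^2$ on the compact set $\overline{\Omega}$, standard arguments give small disjoint closed neighborhoods $U_1,\ldots,U_N$ of the critical points in $K$, each contractible in $F^{c_2}$, and a uniform lower bound $|DF|\ge\alpha>0$ on $\Omega\cap\{c_1\le F\le c_2\}\setminus\bigcup U_j$. Second, I would build a locally Lipschitz vector field $V$ on $\overline\Omega$ with $\langle DF,V\rangle \ge \tfrac12|DF|^2$ on this set, cut off to vanish near $\bigcup U_j$, and then use the boundary assumption to ensure that the flow $\eta_t$ of $-V$ starting at any $x\in\overline\Omega$ never leaves $\overline\Omega$: on the portion of $\partial\Omega$ where $\partial F/\partial n>0$ one has $\langle V,n\rangle\ge \tfrac12\alpha'>0$, so $-V$ points inward; on the portion where $F(x)<c_1$ we can modify $V$ to be zero (since we do not need to move points already in $F^{c_1}$). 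Third, by the standard deformation lemma, after sufficient flow time $T$, every point of $F^{c_2}$ is pushed either into $F^{c_1}$ or into $\bigcup U_j$. This yields a decomposition $F^{c_2} = A_0 \cup A_1\cup\cdots\cup A_N$ where $A_0 = \eta_T^{-1}(F^{c_1})$ deforms into $F^{c_1}$ within $F^{c_2}$ (and fixes $F^{c_1}$) and each $A_j = \eta_T^{-1}(U_j)$ is contractible in $F^{c_2}$, verifying all three clauses of Definition~\ref{de2.4}. This forces $\mathrm{Cat}_{F^{c_2}}(F^{c_2},F^{c_1})\le N$, the desired contradiction.

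The main obstacle is the second step: ensuring the negative pseudo-gradient flow is well-defined on the closed set $\overline\Omega$ and does not escape through $\partial\Omega$. The subtlety is that $\partial\Omega$ is partitioned (possibly in a complicated way) into the set where $F<c_1$ and the set where $\partial F/\partial n>0$, and these must be glued by a cutoff argument so that the modified field remains a pseudo-gradient on $\Omega\cap\{c_1\le F\le c_2\}\setminus\bigcup U_j$. I would handle this by first multiplying the naive field $-DF/|DF|^2$ by cutoffs that vanish on a neighborhood of $\{F\le c_1-\delta\}$ and on a neighborhood of $\bigcup U_j$, and then near the ``outward-derivative'' part of the boundary use continuity of $\partial F/\partial n$ to keep the relevant inner product strictly positive; the ``In particular'' clause then follows by taking $N=0$, since the construction would yield a deformation of $F^{c_2}$ into $F^{c_1}$, contradicting the non-deformability hypothesis.
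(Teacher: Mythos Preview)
The paper does not give a proof of Lemma~\ref{lm2.6}; it is stated without proof and attributed to \cite{22}, Proposition~2.3. So there is no ``paper's own proof'' to compare against.

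Your outline is the correct standard Lusternik--Schnirelmann argument and would succeed with routine technical care. A few remarks on points you flagged or glossed over. First, the critical points in $K$ automatically lie in the interior of $\Omega$: if $x\in\partial\Omega$ with $F(x)>c_1$, the boundary hypothesis forces $\partial F/\partial n>0$, so $DF(x)\ne 0$. Hence small balls around the points of $K$ sit in the open set $\{c_1<F<c_2\}\cap\Omega$ and are contractible in $F^{c_2}$, as you claimed. Second, for clause~(ii) of Definition~\ref{de2.4} you need the deformation $h_0$ to fix $F^{c_1}$ pointwise, not just $\{F\le c_1-\delta\}$; the cleanest fix is to cut off $V$ to vanish on $\{F\le c_1\}$, accept that the flow then only pushes $F^{c_2}\setminus\bigcup U_j$ into $F^{c_1+\delta}$ in finite time, and compose with the standard retraction of $F^{c_1+\delta}$ onto $F^{c_1}$ along the negative gradient (well-defined since $c_1$ is regular and the same boundary analysis keeps this auxiliary flow inside $\overline\Omega$). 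Third, your gluing on $\partial\Omega$ via a partition of unity subordinate to the open cover $\{F<c_1\}\cup\{\partial F/\partial n>0\}$ is the right move; on the overlap the convex combination of $0$ and an inward-pointing vector is still inward-pointing or zero, so the flow does not escape.
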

\noindent

\begin{lem}\label{lm4.2}(\ \cite{8}, Proposition\ B.1)
Suppose that $X$ and $\Gamma$ are two compact sets in\ $\mathbb R^{3}$ satisfying\ $\Gamma \subset X$. Let
\begin{equation}
K=\underbrace {X\times \cdots \times X}_k,
\end{equation}
\begin{equation}
L_1=\underbrace {\Gamma \times X\times \cdots \times X}_k \cup \underbrace {X \times \Gamma \times \cdots \times X}_k \cup \underbrace {X \times \cdots X \times \Gamma}_k,
\end{equation}
\begin{equation}
L_2=L_1 \cup D, \quad D=\big\{Y=(y_1,\ldots,y_k):\ y_i=y_j\ \text{for\ some}\ i\not =j\big\},
\end{equation}
If\ $H^m(X,\Gamma)\not =0$ for some\ $m\ge 1$, then\ $H_{*}(K,L_2)\not =0$. In particular, $K$ cannot be deformed into\ $L_2$.
\end{lem}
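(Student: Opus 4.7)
The plan is to construct an explicit non-zero cohomology class in $H^{mk}(K,L_2)$ via a cup-product / K\"unneth argument, which then forces $H_*(K,L_2)\neq 0$ by universal coefficients and rules out any deformation of $K$ into $L_2$. Throughout I would use field coefficients (say $\mathbb{Q}$) to avoid torsion issues, and interpret cohomology in the \v{C}ech / Alexander--Spanier sense so that the argument applies to arbitrary compact $X,\Gamma\subset\mathbb R^3$.

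\emph{Step 1 (K\"unneth class).} Pick a non-zero $\alpha\in H^m(X,\Gamma)$ and let $\pi_i:K\to X$ be the projection onto the $i$-th factor. Since $\pi_i^{-1}(\Gamma)$ is exactly the $i$-th face making up $L_1$, the pullback $\pi_i^*\alpha$ defines a class in $H^m(K,\pi_i^{-1}(\Gamma))$, and hence the cup product
\[
\beta \;:=\; \pi_1^*\alpha\cup\pi_2^*\alpha\cup\cdots\cup\pi_k^*\alpha
\]
lies in $H^{mk}(K,L_1)$. Identifying $(K,L_1)$ with the $k$-fold product pair $(X,\Gamma)^{\times k}$, the relative K\"unneth theorem over a field gives $H^{mk}(K,L_1)\supset H^m(X,\Gamma)^{\otimes k}$ with $\beta$ corresponding to the cross product $\alpha\times\cdots\times\alpha\neq 0$; in particular $\beta\neq 0$.

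\emph{Step 2 (triple sequence and restriction to the diagonal).} Consider the long exact sequence of the triple $(K,L_2,L_1)$,
\[
\cdots\longrightarrow H^{mk}(K,L_2)\longrightarrow H^{mk}(K,L_1)\stackrel{\delta}{\longrightarrow}H^{mk}(L_2,L_1)\longrightarrow\cdots,
\]
and note that excision for $L_2=L_1\cup D$ identifies $H^*(L_2,L_1)\cong H^*(D,D\cap L_1)$. So it suffices to prove $\delta\beta=0$, i.e.\ that $\beta|_{(D,D\cap L_1)}$ vanishes. Decompose $D=\bigcup_{i<j}D_{ij}$ with $D_{ij}=\{y:y_i=y_j\}$. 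On $D_{ij}$ the projections $\pi_i$ and $\pi_j$ coincide, so
\[
(\pi_i^*\alpha\cup\pi_j^*\alpha)\big|_{D_{ij}}=(\pi_i|_{D_{ij}})^*(\alpha\cup\alpha).
\]
Because $X\subset\mathbb R^3$ has covering dimension $\le 3$, $\alpha\cup\alpha\in H^{2m}(X,\Gamma)$ is forced to vanish: for $m\ge 2$ by dimension (since $2m\ge 4>3$), and for $m=1$ by graded commutativity over $\mathbb Q$ (one has $\alpha\cup\alpha=-\alpha\cup\alpha$). Hence $\beta|_{D_{ij}}=0$ for every pair $i<j$.

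\emph{Step 3 (globalize and conclude).} The main technical obstacle is to upgrade the piecewise vanishings $\beta|_{D_{ij}}=0$ to the global statement $\beta|_{(D,D\cap L_1)}=0$. I would do this by induction on $k$ together with a Mayer--Vietoris spectral sequence for the cover $\{D_{ij}\}_{i<j}$ of $D$: on every multi-intersection $D_{i_1j_1}\cap\cdots\cap D_{i_rj_r}$ at least two projections are forced to coincide, so the restriction of $\beta$ there again factors through $\alpha\cup\alpha=0$, which inductively kills all columns of the spectral sequence in the relevant bidegree. With $\beta|_D=0$ in hand, exactness in the triple sequence produces a preimage $\widetilde\beta\in H^{mk}(K,L_2)$ mapping to $\beta\neq 0$, so $\widetilde\beta\neq 0$ and $H^{mk}(K,L_2)\neq 0$; universal coefficients then give $H_*(K,L_2)\neq 0$. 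The last assertion is now immediate: any deformation of $K$ into $L_2$ inside $K$ would make the inclusion $L_2\hookrightarrow K$ a homotopy equivalence and force $H_*(K,L_2)=0$, contradicting what we just established.
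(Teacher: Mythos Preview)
The paper does not prove this lemma at all; it is simply quoted from Dancer--Yan \cite{8} (Proposition~B.1) and used as a black box in the proof of Theorem~\ref{thm1.1}. So there is no ``paper's proof'' to compare against, and your K\"unneth/cup-product outline is indeed the standard route one finds in that reference and in related configuration-space arguments.

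That said, your Step~3 has a genuine gap. Knowing that $\beta$ restricts to zero on each diagonal piece $D_{ij}$---and even on every multi-intersection $D_I=\bigcap_{(i,j)\in I}D_{ij}$---is \emph{not} sufficient to conclude $\beta|_{(D,D\cap L_1)}=0$. In the Mayer--Vietoris spectral sequence for the closed cover $\{D_{ij}\}$, the restriction $\beta|_{D_I}$ lives in $H^{mk}(D_I,\cdot)$, which sits in $E_1^{\,|I|-1,\,mk}$. But the associated graded pieces of the class $\beta|_D\in H^{mk}(D,\cdot)$ lie in $E_\infty^{\,p,\,mk-p}$ for $p\ge 0$, i.e.\ in \emph{lower} cohomological degree on the $p$-fold intersections. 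Vanishing of the degree-$mk$ restrictions only kills the $p=0$ contribution; the pieces in $E_\infty^{\,p,\,mk-p}$ with $p\ge 1$ encode genuine gluing obstructions that are \emph{not} controlled by $\beta|_{D_I}$. So the sentence ``the restriction of $\beta$ there again factors through $\alpha\cup\alpha=0$, which inductively kills all columns'' conflates two different gradings and does not go through as written. The actual argument in \cite{8} handles this by a more careful inductive lifting of $\beta$ through the sets $L_1\cup D_{12}\cup\cdots\cup D_{ij}$ one diagonal at a time, tracking the relative cup-product structure rather than just the restrictions.

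A minor secondary point: your blanket use of $\mathbb{Q}$-coefficients is what makes $\alpha\cup\alpha=0$ automatic when $m=1$, but the hypothesis $H^m(X,\Gamma)\neq 0$ is not stated over $\mathbb{Q}$ and could in principle be pure torsion. In the paper's application this is harmless (there $X$ is essentially a closed ball and the relevant group is $H^t(B^t_\delta,\partial B^t_\delta)\cong\mathbb{Z}$), but for the lemma in the generality stated you would need to justify the coefficient choice.
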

%\noindent
%\medskip

\section{Finite dimensional reduction}

In this section we complete Step 1 as mentioned in Section 1. Denote
\begin{gather*}
 Y=(y_1,\ldots,y_k)\in \mathbb {R}^3\times \cdots \times\mathbb {R}^3,\quad W_{\varepsilon ,Y}=\sum_{i=1}^k w_{\varepsilon,y_i},\\
  D^k_{\varepsilon,\delta}=\Big\{Y: y_i\in \overline{ B_{\delta}(0)},\ \frac{\eta|y_i-y_j|}{\varepsilon}\ge R_1 ,\   i,j=1,\ldots,k,\ i\not=j\Big\},
  \end{gather*}
%\end{equation}
where\ $R_1>0 $ is a fixed large constant. Let
\begin{equation*}
 E_{\varepsilon ,Y}^k = \bigg\{ {\varphi  \in {H_\varepsilon }:{{\bigg\langle {\varphi ,\frac{{\partial {w_{\varepsilon ,{y_i}}}}}{{\partial y_{ij}}}} \bigg\rangle }_\varepsilon } = 0,\ i = 1, \ldots, k,\ j = 1,2,3} \bigg\},
\end{equation*}
and define
\begin{equation*}
 {J_\varepsilon }\left( {Y,\varphi } \right) = {I_\varepsilon }\left( {{W_{\varepsilon ,Y}} + \varphi } \right),\ \forall \left( {Y,\varphi } \right) \in D_{\varepsilon ,\delta }^k \times E_{\varepsilon ,Y}^k.
\end{equation*}
Expand\ $J_{\varepsilon}(Y,\varphi)$ near $\varphi=0$ for each fixed $Y$:
\begin{equation}\label{120}
 {J_\varepsilon }\left( {Y,\varphi } \right) = {J_\varepsilon }\left( {Y,0} \right) + {l_{\varepsilon ,Y}}\left( \varphi  \right) + \frac{1}{2}Q_{\varepsilon,Y}(\varphi) + {R_{\varepsilon ,Y}}\left( \varphi  \right),
\end{equation}
where
%\begin{align}\label{31}
%  {l_{\varepsilon,Y} }\left( \varphi  \right) &= \left\langle {{I_\varepsilon }'\left( {{W_{\varepsilon ,Y}}} \right),\varphi } \right\rangle  \hfill\nonumber \\
%   &= {\left\langle {{W_{\varepsilon ,Y}},\varphi } \right\rangle _\varepsilon } + \varepsilon b{\int {\left| {\nabla {W_{\varepsilon ,Y}}} \right|} ^2}\int {\nabla {W_{\varepsilon ,Y}}\nabla \varphi }  - \int {Q\left( x \right)} W_{\varepsilon ,Y}^{q - 1}\varphi ,
%\end{align}
\begin{equation}\label{31}
\begin{aligned}
% \nonumber to remove numbering (before each equation)
{l_{\varepsilon,Y}}\left(\varphi\right)
&=\left\langle{{I_\varepsilon}'\left({{W_{\varepsilon ,Y}}}\right),\varphi} \right\rangle\\
&={\left\langle{{W_{\varepsilon ,Y}},\varphi}\right\rangle_\varepsilon}+\varepsilon b{\int{\left|{\nabla{W_{\varepsilon,Y}}}\right|}^2}\int{\nabla{W_{\varepsilon,Y}}\nabla\varphi}-\int{Q\left(x\right)} W_{\varepsilon,Y}^{q - 1}\varphi,\\
\end{aligned}
\end{equation}
\begin{equation}\label{32}
\begin{aligned}
Q_{\varepsilon,Y}(\varphi)
=&\left\langle {{I_\varepsilon }''\left( {{W_{\varepsilon ,Y}}} \right)\left[ \varphi  \right],\varphi } \right\rangle\\
=&{\left\langle {\varphi ,\varphi } \right\rangle _\varepsilon } + 2\varepsilon b{\bigg( {\int {\nabla {W_{\varepsilon ,Y}}\nabla \varphi } } \bigg)^2} + \varepsilon b{\int {\left| {\nabla {W_{\varepsilon ,Y}}} \right|} ^2}{\int {\left| {\nabla \varphi } \right|} ^2}-\left( {q - 1} \right)\int {Q\left( x \right)} W_{\varepsilon ,Y}^{q - 2}{\varphi ^2},\\
\end{aligned}
\end{equation}
and
\begin{equation}\label{33}
\begin{aligned}
{R_{\varepsilon,Y}}\left( \varphi  \right)
=&\ \frac{{\varepsilon b}}{4}{\bigg( {\int {{{\left| {\nabla \varphi } \right|}^2}} } \bigg)^2} + \varepsilon b\int {\nabla {W_{\varepsilon ,Y}}\nabla \varphi } {\int {\left| {\nabla \varphi } \right|} ^2} - \frac{1}{q}{\int {Q\left( x \right)\left( {{W_{\varepsilon ,Y}} + \varphi } \right)} ^q_+} \\
&\ +\frac{1}{q}\int {Q\left( x \right)W_{\varepsilon ,Y}^q}+\int Q(x)W_{\varepsilon,Y}^{q-1}\varphi+\frac{1}{2}(q-1)\int Q(x)W_{\varepsilon,Y}^{q-2} \varphi^2.
\end{aligned}
\end{equation}
In terms of\ $Q_{\varepsilon,Y}(\varphi)$,\ $\mathcal{L}_{\varepsilon,Y}:E_{\varepsilon,Y}^k\rightarrow E_{\varepsilon,Y}^k$ is a bounded linear mapping defined by:
\begin{equation}\label{399}
\begin{aligned}
% \nonumber to remove numbering (before each equation)
  \langle \mathcal{L}_{\varepsilon,Y}\varphi_1, \varphi_2 \rangle_\varepsilon =&\ \langle\varphi_1 ,\varphi_2 \rangle_\varepsilon+\varepsilon b\int|\nabla W_{\varepsilon,Y}|^2\int \nabla\varphi_1 \nabla\varphi_2+2\varepsilon b\int\nabla W_{\varepsilon,Y}\nabla\varphi_1 \int\nabla W_{\varepsilon,Y}\nabla\varphi_2\\
  &\ -(q-1)\int Q(x)W_{\varepsilon,Y}^{q-2}\varphi_1 \varphi_2,\qquad \forall\ \varphi_1,\varphi_2\in E_{\varepsilon,Y}^k.
\end{aligned}
\end{equation}
The following result shows that $\mathcal{L}_{\varepsilon,Y}$ is invertible when restricted on\ $E_{\varepsilon,Y}^k$.
\begin{lem}\label{lm3.4}
There exist\ $\rho>0$, $\varepsilon_0>0$ and $\delta_0>0$, such that for every\ $\varepsilon\in(0,\varepsilon_0]$ and $\delta\in(0,\delta_0]$, there holds
\begin{equation*}
  \|\mathcal{L}_{\varepsilon,Y}\varphi \|_\varepsilon\ge \rho \|\varphi\|_\varepsilon,\qquad \forall\ \varphi\in E_{\varepsilon,Y}^k,
\end{equation*}
uniformly with respect to\ $Y\in D^k_{\varepsilon,\delta}$.
\end{lem}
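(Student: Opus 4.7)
The plan is to argue by contradiction along the usual Lyapunov-Schmidt lines, paying special attention to the two nonlocal contributions in (\ref{399}). Suppose no such $\rho$ exists; then there exist sequences $\varepsilon_n\downarrow 0$, $Y_n=(y_{n,1},\ldots,y_{n,k})\in D^k_{\varepsilon_n,\delta}$ and $\varphi_n\in E^k_{\varepsilon_n,Y_n}$ with $\|\varphi_n\|_{\varepsilon_n}=1$ but $\|\mathcal{L}_{\varepsilon_n,Y_n}\varphi_n\|_{\varepsilon_n}\to 0$. For each $i\in\{1,\ldots,k\}$ I will rescale $\tilde{\varphi}_n^i(x):=\varepsilon_n^{3/2}\varphi_n(\varepsilon_n x+y_{n,i})$; a direct change of variables gives $a\int|\nabla\tilde{\varphi}_n^i|^2+\int(\tilde{\varphi}_n^i)^2=\|\varphi_n\|_{\varepsilon_n}^2=1$, so the family is bounded in $H^1(\mathbb{R}^3)$ and, along a subsequence, $\tilde{\varphi}_n^i\rightharpoonup \varphi^i$ weakly in $H^1$ and strongly in $L^2_{\mathrm{loc}}$ and $L^q_{\mathrm{loc}}$.

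The crux is identifying the limit equation. Fix $i$ and $\psi\in C_c^\infty(\mathbb{R}^3)$, set $\psi_n(z):=\varepsilon_n^{-3/2}\psi\bigl((z-y_{n,i})/\varepsilon_n\bigr)$, and test $\mathcal{L}_{\varepsilon_n,Y_n}\varphi_n\to 0$ against the projection of $\psi_n$ onto $E^k_{\varepsilon_n,Y_n}$ (the correction lies in the low-dimensional span of $\partial_{y_{ij}}w_{\varepsilon_n,y_{n,i}}$ and is negligible after rescaling). The local pieces pass to $\int(a\nabla\varphi^i\nabla\psi+\varphi^i\psi)-(q-1)Q(0)\int w^{q-2}\varphi^i\psi$ once the peak separation $|y_{n,i}-y_{n,j}|/\varepsilon_n\to+\infty$ and the exponential decay (\ref{213}) are used to discard contributions from $j\neq i$. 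The Kirchhoff pieces are controlled by $\int|\nabla W_{\varepsilon_n,Y_n}|^2=k\varepsilon_n\int|\nabla w|^2+o(\varepsilon_n)$ (from (\ref{215})) and the rescaling identity $\int\nabla w_{\varepsilon_n,y_{n,j}}\nabla\varphi_n=\varepsilon_n^{-1/2}\int\nabla w\nabla\tilde{\varphi}_n^j$; the outcome is the coupled system
\begin{equation*}
-\Bigl(a+bk\int|\nabla w|^2\Bigr)\Delta\varphi^i-2b\Bigl(\sum_{j=1}^k\int\nabla w\nabla\varphi^j\Bigr)\Delta w+\varphi^i-(q-1)Q(0)w^{q-2}\varphi^i=0,\quad i=1,\ldots,k.
\end{equation*}
Summing over $i$, the aggregate $\Phi:=\sum_{i=1}^k\varphi^i$ satisfies $\mathcal{L}\Phi=0$ with $\mathcal{L}$ the operator of Lemma~\ref{lm2.2}; by nondegeneracy, $\Phi\in\mathrm{span}\{\partial_{x_1}w,\partial_{x_2}w,\partial_{x_3}w\}$. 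The orthogonality conditions defining $E^k_{\varepsilon_n,Y_n}$ rescale to $a\int\nabla\tilde{\varphi}_n^i\nabla\partial_\ell w+\int\tilde{\varphi}_n^i\partial_\ell w=0$; summing over $i$ and passing to the limit, $\Phi$ is also orthogonal to each $\partial_\ell w$ in the inner product $(u,v)\mapsto a\int\nabla u\nabla v+\int uv$. The corresponding Gram matrix is positive definite (a multiple of the identity by the radial symmetry of $w$), forcing $\Phi\equiv 0$. With the coupling term killed, each $\varphi^i$ solves $-(a+bk\int|\nabla w|^2)\Delta\varphi^i+\varphi^i-(q-1)Q(0)w^{q-2}\varphi^i=0$, whose kernel is still $\mathrm{span}\{\partial_\ell w\}$; a second application of the rescaled orthogonality relation, for each index $i$ separately, then gives $\varphi^i\equiv 0$ for every $i$.

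Finally, to contradict $\|\varphi_n\|_{\varepsilon_n}=1$, I will test $\mathcal{L}_{\varepsilon_n,Y_n}\varphi_n\to 0$ against $\varphi_n$ itself, obtaining
\begin{equation*}
1+\varepsilon_n b\int|\nabla W_{\varepsilon_n,Y_n}|^2\int|\nabla\varphi_n|^2+2\varepsilon_n b\Bigl(\int\nabla W_{\varepsilon_n,Y_n}\nabla\varphi_n\Bigr)^{2}-(q-1)\int QW_{\varepsilon_n,Y_n}^{q-2}\varphi_n^2=o(1).
\end{equation*}
The first three summands are nonnegative, so it suffices to prove $\int QW_{\varepsilon_n,Y_n}^{q-2}\varphi_n^2\to 0$. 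Splitting the integration by peak and using $(Q_1)$, the principal contribution near $y_{n,i}$ rescales to $Q(0)\int w^{q-2}(\tilde{\varphi}_n^i)^2$, and the exponential decay of $w^{q-2}$ upgrades the $L^2_{\mathrm{loc}}$ convergence of $\tilde{\varphi}_n^i$ to convergence against this weight on all of $\mathbb{R}^3$; since $\varphi^i\equiv 0$, the limit is $0$. We are left with $1\leq o(1)$, the desired contradiction. I expect the chief technical obstacle to be the second nonlocal term in (\ref{399}), which intertwines all $k$ rescaled limits $\varphi^j$ at once; the remedy is precisely the summation trick $\Phi=\sum_i\varphi^i$, which reduces the coupled system to the single-peak nondegeneracy afforded by Lemma~\ref{lm2.2}.
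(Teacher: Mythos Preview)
Your proposal is correct and follows the same contradiction--rescaling--nondegeneracy scheme as the paper. The one point of genuine difference is the handling of the second Kirchhoff term in (\ref{399}). The paper rescales around a single peak $i_0$ and asserts directly that the weak limit lies in $\ker\mathcal{L}$; you instead recognize that after rescaling this term still couples all $k$ blow-up limits $\varphi^1,\ldots,\varphi^k$ through the common factor $\sum_j\int\nabla w\,\nabla\varphi^j$, write down the coupled limit system, and introduce $\Phi=\sum_i\varphi^i$ to reach $\ker\mathcal{L}$ via Lemma~\ref{lm2.2} before eliminating each $\varphi^i$ separately through the local (Schr\"odinger-type) nondegeneracy of $-c_k\Delta+1-(q-1)Q(0)w^{q-2}$. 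Your treatment of that step is in fact more transparent and supplies a justification the paper's write-up elides.

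One small imprecision: the correct negation of the lemma also produces $\delta_n\to 0$ (as in the paper's setup), and you need this so that $y_{n,i}\to 0$ and hence $Q(\varepsilon_n x+y_{n,i})\to Q(0)$ in the limit equation; with $\delta$ held fixed the coefficient in the limit would be $Q(y_*^i)$ for some accumulation point $y_*^i\in\overline{B_\delta(0)}$, and the nondegeneracy statement of Lemma~\ref{lm2.2} would not apply verbatim.
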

\noindent
\begin{proof}
We use a contradiction argument. Assume that there exist\ $\varepsilon_n\rightarrow 0$, $\delta_{n}\rightarrow 0$, $ Y_n\in D^k_{\varepsilon_n,\delta_n}$ and $\varphi_n\in E_n\equiv E_{\varepsilon_n,Y_n}^k$ such that
\begin{equation}\label{338}
 \langle \mathcal{L}_{\varepsilon_n,Y_n}\varphi_n, h_n \rangle_{\varepsilon_n}=o_n(1)\|\varphi_n\|_{\varepsilon_n}\|h_n\|_{\varepsilon_n},\ \forall\ h_n\in E_n.
\end{equation}
With no loss of generality, we assume that\ $\|\varphi_n\|_{\varepsilon_n}^2=\varepsilon_n^3$, and denote
\begin{equation*}
\varphi_{n,{i_0}}(x)=\varphi_n(\varepsilon_n x+y_{n,{i_0}}),\ i_0=1,2,\ldots,k,
\end{equation*}
\begin{equation*}
 \tilde{E_n}=\{h_{n,{i_0}}(x):h_n(x) \in E_n,1\le i_0\le k \}.
\end{equation*}
Substituting\ (\ref{399}) into\ (\ref{338}), we obtain
\begin{equation}\label{339}
\begin{aligned}
% \nonumber to remove numbering (before each equation)
   &\int\left(a\nabla \varphi_{n,{i_0}} \nabla h_{n,{i_0}}+ \varphi_{n,{i_0}} h_{n,{i_0}} \right)\\
   +&\ b\int\bigg(k|\nabla w|^2 +\sum_{i\not=j}\nabla w\Big(x+\frac{y_{n,{i_0}}-y_{n,{i}}}{\varepsilon_n}\Big) \nabla w\Big(x+\frac{y_{n,{i_0}}-y_{n,{j}}}{\varepsilon_n}\Big)\bigg)\int \nabla\varphi_{n,{i_0}} \nabla h_{n,{i_0}} \\
   +&\ 2b\int \sum_{i=1}^k \nabla w\Big(x+\frac{y_{n,{i_0}}-y_{n,{i}}}{\varepsilon_n}\Big)\nabla \varphi_{n,{i_0}}\int \sum_{i=1}^k \nabla w\Big(x+\frac{y_{n,{i_0}}-y_{n,{j}}}{\varepsilon_n}\Big)\nabla h_{n,{i_0}}\\
  -&\ (q-1)\int Q(\varepsilon_n x+y_{n,{i_0}})\bigg(  \sum_{i=1}^k w\Big(x+\frac{y_{n,{i_0}}-y_{n,{i}}}{\varepsilon_n}\Big) \bigg)^{q-2}\varphi_{n,{i_0}} h_{n,{i_0}}\\
   =&\ o_n(1)\bigg( \int\left( a|\nabla h_{n,{i_0}}|^2 +(h_{n,{i_0}})^2 \right)\bigg)^{\frac 12}.
\end{aligned}
\end{equation}
Since
\begin{equation*}
 \|\varphi_n\|_{\varepsilon_n}^2=\varepsilon_n^3\Rightarrow \int\big( a|\nabla \varphi_{n,{i_0}}|^2 +(\varphi_{n,{i_0}})^2 \big)=1,
\end{equation*}
we infer that $\{\varphi_{n,{i_0}}\}$ is a bounded sequence in\ $H^1(\mathbb R^3)$ for any $1\le i_0\le k$. Hence, up to a subsequence, there exists\ $\varphi\in H^1(\mathbb{R}^3)$ such that
\begin{equation*}
\begin{aligned}
% \nonumber to remove numbering (before each equation)
  \varphi_{n,{i_0}} \rightharpoonup  \varphi &\quad \text{in\ $H^1(\mathbb R^3)$},\\
  \varphi_{n,{i_0}} \to \varphi  &\quad \text{in\ $L^p_{\mathrm {loc}}(\mathbb R^3)$,\ $1\le p<6$},\\
  \varphi_{n,{i_0}} \to \varphi   &\quad \text{a.e. in\ $\mathbb R^3$}.
\end{aligned}
\end{equation*}
Next, we will prove that\ $\varphi\equiv 0$. For any\ $i=1,\ldots,k$, $j=1,2,3$,
\begin{equation}\nonumber
 % \nonumber to remove numbering (before each equation)
   \int\bigg (\varepsilon_n^2 a \nabla\varphi_n \nabla\bigg(\frac{\partial w_{\varepsilon,y_i}}{\partial y_{ij}}\bigg)+\varphi_n \frac{\partial w_{\varepsilon,y_i}}{\partial y_{ij}}   \bigg) = 0
 \end{equation}
 is equivalent to
 \begin{equation}\label{341}
 \int\bigg(a\nabla\varphi_{n,{i_0}} \nabla\bigg(\frac{\partial w}{\partial x_j}\bigg)+\varphi_{n,{i_0}}\frac{\partial w}{\partial x_j}   \bigg)=0.
 \end{equation}
Thus, we can define an equivalent norm\ $\|u\|_1^2=\int (a|\nabla u|^2+u^2)$ in\ $H^1(\mathbb R^3)$, then
\begin{equation}\nonumber
  \varphi_n\in E_n
\end{equation}
is equivalent to
\begin{equation}\label{342}
\varphi_{n,{i_0}}\in(\ker \mathcal L)^{\perp}.
\end{equation}
Since\ $w$ is radially symmetric, we obtain
\begin{equation*}
  \left\langle \frac{\partial w}{\partial x_i}, \frac{\partial w}{\partial x_j} \right\rangle_1=0,\quad\forall\ i\not=j.
\end{equation*}
For every\ $h\in C_0^{\infty}(\mathbb R^3)$, define
\begin{equation}\label{343}
  h_{n,{i_0}}=h-\sum_{j=1}^3 a_{n,j} \frac{\partial w}{\partial x_j},
\end{equation}
where\ $a_{n,j}=\frac{\langle h,\partial_{x_j}w \rangle_1}{\langle \partial_{x_j}w , \partial_{x_j}w\rangle_1}$, then\ $h_n\in E_n$.
Substituting\ (\ref{343}) into\ (\ref{339}) and let\ $n\to\infty$, we obtain
\begin{equation*}
  \langle \mathcal L\varphi,h  \rangle -\Big\langle \mathcal L\varphi,\sum_{j=1}^3 a_{n,j} \partial_{x_j} w \Big\rangle=0.
\end{equation*}
Since\ $\partial_{x_j}w\in \mathrm{Ker}\,  \mathcal L$,
\begin{equation*}
  \langle \mathcal L\varphi,h  \rangle=0,\quad \forall\ h\in C_0^{\infty}(\mathbb R^3),
\end{equation*}
which implies that
\begin{equation}\label{344}
\varphi \in \mathrm{Ker}\,  \mathcal L.
\end{equation}
Note that $\varphi_n\in E_n $, letting\ $n\to \infty$ in\ (\ref{341}), we obtain
\begin{equation*}
  \int\bigg(a\nabla\varphi \nabla\bigg(\frac{\partial w}{\partial x_j}\bigg)+\varphi\frac{\partial w}{\partial x_j} \bigg)=0, \quad j=1,2,3.
\end{equation*}
Then
\begin{equation}\label{345}
  \varphi\in(\mathrm{Ker}\, \mathcal L)^{\perp}.
\end{equation}
Combining\ (\ref{344}) and (\ref{345}), we claim that\ $\varphi\equiv 0$.

Now we deduce contradiction. Note that\ $\varphi_{n,{i_0}}\rightarrow 0$ in\ $L^p_{\mathrm{loc}}(\mathbb R^3)\ (1\le p<6)$, so there exists\ $R>0$ sufficiently large such that
\begin{equation*}
% \nonumber to remove numbering (before each equation)
  \int_{\mathbb R^3}w^{q-2}(x)(\varphi_{n,{i_0}})^2 = \int_{B_R(0)}w^{q-2}(x)(\varphi_{n,{i_0}})^2+\int_{B_R^c(0)}w^{q-2}(x)(\varphi_{n,{i_0}})^2
   = o_n(1)+o_R(1).
  \end{equation*}
Then
\begin{equation*}
\begin{aligned}
% \nonumber to remove numbering (before each equation)
 &\bigg |(q-1)\int_{\mathbb R^3}Q(x)W_{\varepsilon_n,Y_n}^{q-2}\varphi_n^2\bigg| \\
 \le&\ C\varepsilon_n^3\int_{\mathbb R^3} \Bigg(\sum_{i=1}^k w\bigg(x+\frac{y_{n,{i_0}}-y_{n,{i}}}{\varepsilon_n}\bigg)\Bigg)^{q-2}(\varphi_{n,{i_0}})^2  \\
   \le&\ C\varepsilon_n^3\int_{\mathbb R^3}w^{q-2}(x)(\varphi_{n,{i_0}})^2+C\varepsilon_n^3\sum_{i\not=i_0}\int_{\mathbb R^3} w^{q-2}\bigg(x+\frac{y_{n,{i_0}}-y_{n,{i}}}{\varepsilon_n}\bigg)(\varphi_{n,{i_0}})^2  \\
   \le&\ \frac{1}{2}\varepsilon_n^3.
  \end{aligned}
  \end{equation*}
However,
\begin{equation*}
\begin{aligned}
% \nonumber to remove numbering (before each equation)
  o_n(1)\|\varphi_n\|_{\varepsilon_n}^2 =&\ \langle \mathcal L_{\varepsilon_n,Y_n}\varphi_n,\varphi_n \rangle \\
   \ge&\ \|\varphi_n\|_{\varepsilon_n}^2-(q-1)\int_{\mathbb R^3}Q(y)W_{\varepsilon_n,Y_n}^{q-2}(y)\varphi_n^2(y)\\
   \ge&\ \frac{1}{2}\|\varphi_n\|_{\varepsilon_n}^2.
\end{aligned}
\end{equation*}
We reach a contradiction. The proof is complete.
\end{proof}

To apply contraction mapping principle to find a critical point of\ $J_{\varepsilon}(Y,\varphi)$, we first need to estimate\ $l_{\varepsilon,Y}(\varphi)$ and\ $R^{(i)}_{\varepsilon,Y}(\varphi)$ for $i=0,1,2$.
\begin{lem}\label{lm3.1}
%For\ $\varepsilon_0$, $\delta_0>0$ sufficiently small, when\ $\varepsilon\in(0,\varepsilon_0]$, $\delta\in(0,\delta_0]$,
There exists a constant\ $C>0$, independent
of\ $\varepsilon,\,\delta$, such that for any\ $Y\in D^k_{\varepsilon,\delta}$ and $\varphi  \in H_\varepsilon$, there holds
\begin{equation}\label{34}
%\begin{split}
  |l_{\varepsilon,Y}(\varphi)|\le C\varepsilon^{\frac32}\Big(\sum_{i=1}^k|Q(y_i)-Q(0)|+\sum_{i=1}^k\sum_{m=1}^{[h]}\varepsilon^m|D^mQ(y_i)| +\varepsilon^{[h]+1}+\sum_{i\not=j}e^{-\bar\theta\frac{\eta|y_i-y_j|}{\varepsilon}}\Big)\|\varphi\|_\varepsilon,
%  \end{split}
  \end{equation}
where\ $\bar \theta=\min\{\frac{q-1}{2},1\}$.
\end{lem}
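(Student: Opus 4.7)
The plan is to exploit the equations satisfied by the individual peaks $w_{\varepsilon,y_i}$ to rewrite $\langle W_{\varepsilon,Y},\varphi\rangle_\varepsilon$ and then estimate the three residuals that remain after the obvious cancellations. Concretely, the scaling relation $\int |\nabla w_{\varepsilon,y_i}|^2 = \varepsilon \int |\nabla w|^2$ combined with (1.18) shows that each $w_{\varepsilon,y_i}$ solves
\[
-\Big(\varepsilon^2 a + \varepsilon b k \!\int |\nabla w_{\varepsilon,y_i}|^2\Big)\Delta w_{\varepsilon,y_i} + w_{\varepsilon,y_i} = Q(0)\, w_{\varepsilon,y_i}^{q-1}.
\]
Testing against $\varphi$ and summing over $i$, I would pair the resulting $-\varepsilon b k \sum_i \int|\nabla w_{\varepsilon,y_i}|^2\int\nabla w_{\varepsilon,y_i}\nabla\varphi$ against the diagonal part of $\varepsilon b\int|\nabla W_{\varepsilon,Y}|^2\int\nabla W_{\varepsilon,Y}\nabla\varphi$; these cancel exactly since $\int|\nabla w_{\varepsilon,y_i}|^2$ is independent of $i$ and there are $k$ peaks.

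After this cancellation, $l_{\varepsilon,Y}(\varphi)$ decomposes into three pieces: (I) a potential mismatch $\sum_i \int (Q(0)-Q(x))w_{\varepsilon,y_i}^{q-1}\varphi$; (II) a nonlinear cross term $-\int Q(x)\big(W_{\varepsilon,Y}^{q-1}-\sum_i w_{\varepsilon,y_i}^{q-1}\big)\varphi$; and (III) a nonlocal off-diagonal residual $\varepsilon b \sum_{i\neq j}\int \nabla w_{\varepsilon,y_i}\nabla w_{\varepsilon,y_j}\cdot \sum_l\int \nabla w_{\varepsilon,y_l}\nabla\varphi$. For (I), I would Taylor expand $Q$ around each $y_i$ up to order $[h]$, so that $Q(0)-Q(x)$ splits as $Q(0)-Q(y_i)$ plus terms $\tfrac{1}{m!}D^mQ(y_i)(x-y_i)^m$ for $1\le m\le [h]$ plus an $O(|x-y_i|^{[h]+1})$ remainder; rescaling $x = y_i + \varepsilon z$ turns each $|x-y_i|^m$ into $\varepsilon^m$ times a factor localized by $w^{q-1}$, and Hölder's inequality combined with Lemma \ref{lm2.1} (i.e.\ $\|\varphi\|_{L^q}\le C\varepsilon^{3/q-3/2}\|\varphi\|_\varepsilon$) produces the prefactor $\varepsilon^{3/2}$ together with the bracketed sum $\sum_i|Q(y_i)-Q(0)| + \sum_i\sum_{m=1}^{[h]}\varepsilon^m|D^mQ(y_i)| + \varepsilon^{[h]+1}$.

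Piece (III) is the easiest: (\ref{217}) gives $\int|\nabla w_{\varepsilon,y_i}\nabla w_{\varepsilon,y_j}| \le C\varepsilon e^{-\eta|y_i-y_j|/\varepsilon}$, while Cauchy--Schwarz on $\int \nabla w_{\varepsilon,y_l}\nabla\varphi$ together with $\|\nabla w_{\varepsilon,y_l}\|_{L^2}=O(\varepsilon^{1/2})$ and $\|\nabla\varphi\|_{L^2}\le C\varepsilon^{-1}\|\varphi\|_\varepsilon$ gives $O(\varepsilon^{-1/2}\|\varphi\|_\varepsilon)$. Multiplying these contributions by the prefactor $\varepsilon$ yields $C\varepsilon^{3/2}\sum_{i\neq j}e^{-\eta|y_i-y_j|/\varepsilon}\|\varphi\|_\varepsilon$, which is dominated by the claimed bound since $\bar\theta\le 1$. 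For (II) I would use the algebraic inequality
\[
\Big|\Big(\textstyle\sum_i w_{\varepsilon,y_i}\Big)^{q-1} - \sum_i w_{\varepsilon,y_i}^{q-1}\Big| \le C\sum_{i\neq j} w_{\varepsilon,y_i}^{\alpha}\, w_{\varepsilon,y_j}^{\,q-1-\alpha},
\]
with $\alpha = q-2$ (and symmetric partner $\alpha = 1$) when $q\ge 3$, and $\alpha = (q-1)/2$ when $2<q<3$; then Hölder's inequality, (\ref{216}), and Lemma \ref{lm2.1} yield $C\varepsilon^{3/2}e^{-\bar\theta\eta|y_i-y_j|/\varepsilon}\|\varphi\|_\varepsilon$.

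The main obstacle is the case split inside (II): when $2<q<3$ one has $q-2<1$, so the pairing $w_{\varepsilon,y_i}^{q-2}w_{\varepsilon,y_j}$ forces the exponent inside (\ref{216}) to drop below $1$, invalidating the clean $\min\{r,s\}$ estimate. The symmetric split $w_{\varepsilon,y_i}^{(q-1)/2}w_{\varepsilon,y_j}^{(q-1)/2}$ restores an exponent $\ge 1$ in (\ref{216}) and produces the slower decay rate $(q-1)/2$ on the exponential, which is exactly why $\bar\theta = \min\{(q-1)/2,1\}$ appears in the statement. All other estimates are uniform in $Y\in D_{\varepsilon,\delta}^k$, so the constant $C$ is independent of $\varepsilon$ and $\delta$.
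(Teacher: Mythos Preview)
Your proposal is correct and follows essentially the same route as the paper: the same cancellation via the equation for $w_{\varepsilon,y_i}$, the same three-piece decomposition (your (I), (II), (III) are the paper's $l_{22}$, $l_{21}$, $l_1$), and the same case split at $q=3$ for the nonlinear cross term. One minor technical point: in your treatment of (I), the Taylor expansion with $O(|x-y_i|^{[h]+1})$ remainder is only valid on a fixed ball around $y_i$, so the paper explicitly splits the integral into $B_\delta(y_i)$ and its complement (the far part being exponentially small by the decay of $w$); you should make this split explicit. Also, your explanation of why the symmetric split is needed for $2<q<3$ is slightly off---the exponent $(q-1)/2$ is still below $1$, so it is only after applying Cauchy--Schwarz to separate $\varphi$ that the remaining integral $\int w_{\varepsilon,y_i}^{q-1}w_{\varepsilon,y_j}^{q-1}$ has equal exponents $q-1\ge 1$, making (\ref{216}) applicable; this is exactly what the paper does in (\ref{36}).
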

\noindent
\begin{proof}
Since\ $w$ is the solution of\ (\ref{25}), we obtain that\ $w_{\varepsilon,y_i}\ (1\le i\le k)$ satisfies
\begin{equation*}
  -\bigg(\varepsilon^2a+\varepsilon bk\int|\nabla w_{\varepsilon,y_j}|^2\bigg)\Delta w_{\varepsilon,y_i}+w_{\varepsilon,y_i}=Q(0)w_{\varepsilon,y_i}^{q-1},\ j=1,\ldots,k.
\end{equation*}
We sum from\ $i=1$ to\ $k$ and get
\begin{equation*}
  -\bigg(\varepsilon^2a+\varepsilon bk\int|\nabla w_{\varepsilon,y_j}|^2\bigg)\Delta W_{\varepsilon,Y}+W_{\varepsilon,Y}=Q(0)\sum_{i=1}^kw_{\varepsilon,y_i}^{q-1}.
  \end{equation*}
Multiplying\ $\varphi$ on both sides of above equation and integrating, we obtain
\begin{equation*}
  \bigg(\varepsilon^2a+\varepsilon bk\int|\nabla w_{\varepsilon,y_j}|^2\bigg)\int\nabla W_{\varepsilon,Y}\nabla\varphi+\int W_{\varepsilon,Y}\varphi
=\int Q(0)\sum_{i=1}^kw_{\varepsilon,y_i}^{q-1}\varphi.
\end{equation*}
Then
\begin{equation}\label{35}
\begin{aligned}
% \nonumber to remove numbering (before each equation)
  \langle  W_{\varepsilon,Y},\varphi\rangle_\varepsilon  &= \int\left(\varepsilon^2 a\nabla W_{\varepsilon,Y}\nabla\varphi +W_{\varepsilon,Y}\varphi\right)\\
   &= -\varepsilon bk\int|\nabla w_{\varepsilon,y_j}|^2\int \nabla W_{\varepsilon,Y}\nabla\varphi+\int Q(0)\sum_{i=1}^kw_{\varepsilon,y_i}^{q-1}\varphi.
\end{aligned}
\end{equation}
Substituting\ (\ref{35}) into\ (\ref{31}), we obtain
\begin{equation*}
\begin{aligned}
% \nonumber to remove numbering (before each equation)
  l_{\varepsilon,Y}(\varphi) =&\ \varepsilon b\int \nabla W_{\varepsilon,Y}\nabla\varphi\bigg(\int |\nabla  W_{\varepsilon,Y}|^2-\int \sum_{j=1}^k|\nabla w_{\varepsilon,y_j}|^2\bigg)\\
  &\ -\bigg(\int Q(x)W_{\varepsilon,Y}^{q-1}\varphi-\int Q(0)\sum_{i=1}^k w_{\varepsilon,y_i}^{q-1}\varphi\bigg) \\
  =:&\ l_1-l_2.
  \end{aligned}
\end{equation*}
To estimate\ $l_1$, combining\ H$\ddot{\mathrm o}$lder inequality and\ \eqref{217} yields
\begin{equation}\label{622}
\begin{aligned}
% \nonumber to remove numbering (before each equation)
  |l_1| &= \bigg|\varepsilon b\int \nabla W_{\varepsilon,Y}\nabla\varphi\bigg(\int |\nabla  W_{\varepsilon,Y}|^2-\int \sum_{j=1}^k|\nabla w_{\varepsilon,y_j}|^2\bigg)\bigg|\\
   &\le   \varepsilon b\bigg(\sum_{i=1}^{k}\int |\nabla w_{\varepsilon,y_i}\nabla \varphi|\bigg)\bigg(  \sum_{i\not=j}\int |\nabla w_{\varepsilon,y_i}\nabla w_{\varepsilon,y_j}|\bigg) \\
   &\le  \varepsilon^2 b \sum_{i\not=j}e^{-\frac{\eta|y_i-y_j|}{\varepsilon}} \bigg(\sum_{i=1}^k \|\nabla w_{\varepsilon,y_i}\|_{L^2}\|\nabla \varphi\|_{L^2}\bigg) \\
   &\le  \varepsilon^2 b \sum_{i\not=j}e^{-\frac{\eta|y_i-y_j|}{\varepsilon}}\bigg( k\varepsilon^{\frac 12}\|\nabla w\|_{L^2}\frac{1}{\sqrt{a}\varepsilon}\|\varphi\|_\varepsilon\bigg)\\
   &\le C\varepsilon^{\frac 32}\sum_{i\not=j}e^{-\frac{\eta|y_i-y_j|}{\varepsilon}}\|\varphi\|_\varepsilon.
\end{aligned}
\end{equation}
Next, we split\ $l_2$ into two parts:
\begin{equation}
\begin{aligned}
% \nonumber to remove numbering (before each equation)
  l_2&= \int Q(x)\Big(\sum_{i=1}^k w_{\varepsilon ,y_i}\Big)^{q-1}\varphi-\sum_{i=1}^kQ(0)w_{\varepsilon,y_i}^{q-1}\varphi\\
    &= \int Q(x)\bigg(  \Big(\sum_{i=1}^k w_{\varepsilon ,y_i}\Big)^{q-1}-\sum_{i=1}^k w_{\varepsilon,y_i}^{q-1} \bigg)\varphi+\int\left(Q(x)-Q(0)\right) \sum_{i=1}^k w_{\varepsilon,y_i}^{q-1}\varphi\\
   &=: l_{21}+l_{22}.
\end{aligned}
\end{equation}
To estimate\ $l_{21}$, for\ $2<q\le3$, by the following inequality
\begin{equation*}
\begin{aligned}
% \nonumber to remove numbering (before each equation)
  \left||a+b|^{q-1}-|a|^{q-1}-|b|^{q-1}\right|   &\le \left\{
  \begin{array}{ll}
    C|a||b|^{q-2}, & \hbox{if\ $|a|\le|b|$,} \\
    C|b||a|^{q-2}, & \hbox{if\ $|b|\le|a|$,}
  \end{array}
\right.\\
   &\le C|a|^{\frac{q-1}{2}}|b|^{\frac{q-1}{2}},
\end{aligned}
\end{equation*}
we obtain
\begin{equation}\label{36}
\begin{aligned}
% \nonumber to remove numbering (before each equation)
  |l_{21}| &= \bigg|\int Q(x)\bigg(\Big(\sum_{i=1}^k w_{\varepsilon ,y_i}\Big)^{q-1}-\sum_{i=1}^k w_{\varepsilon,y_i}^{q-1}\bigg)\varphi\bigg| \\
   &\le C\int\sum_{i\not=j}w_{\varepsilon,y_i}^{\frac{q-1}{2}}w_{\varepsilon,y_j}^{\frac{q-1}{2}}|\varphi|
   \le C\sum_{i\not=j}\bigg(\int w_{\varepsilon ,y_i}^{q-1}w_{\varepsilon ,y_j}^{q-1}\bigg)^{\frac 12}\|\varphi\|_{L^2} \\
   &\le C\varepsilon^{\frac32}\sum_{i\not=j}e^{-\frac{q-1}{2}\frac{\eta|y_i-y_j|}{\varepsilon}}\|\varphi\|_\varepsilon.
\end{aligned}
\end{equation}
For\ $q>3$, we have
\begin{equation}\label{37}
\begin{aligned}
% \nonumber to remove numbering (before each equation)
 |l_{21}| &= \bigg|\int Q(x)\bigg(\Big(\sum_{i=1}^k w_{\varepsilon ,y_i}\Big)^{q-1}-\sum_{i=1}^k w_{\varepsilon,y_i}^{q-1}\bigg)\varphi\bigg| \\
   &\le C\int\sum_{i\not=j}w_{\varepsilon ,y_i}^{q-2}w_{\varepsilon ,y_j}|\varphi|
   \le C\varepsilon^{\frac32}\sum_{i\not=j}e^{-\frac{\eta|y_i-y_j|}{\varepsilon}}\|\varphi\|_\varepsilon.
\end{aligned}
\end{equation}
Combining\ (\ref{36}) and (\ref{37}) yields
\begin{equation}\label{623}
  |l_{21}|\le C\varepsilon^{\frac32}\sum_{i\not=j}e^{-\min\{\frac{q-1}{2},1\}\frac{\eta|y_i-y_j|}{\varepsilon}}\|\varphi\|_\varepsilon.
\end{equation}
To estimate\ $l_{22}$, we split $l_{22}$ into two parts:
\begin{equation*}
\begin{aligned}
% \nonumber to remove numbering (before each equation)
  |l_{22}| &= \int \sum_{i=1}^k \left(Q(x)-Q(0)\right) w_{\varepsilon,y_i}^{q-1}\varphi \\
   &= \int \sum_{i=1}^k \left(Q(x)-Q(y_i)\right) w_{\varepsilon,y_i}^{q-1}\varphi +\int \sum_{i=1}^k \left(Q(y_i)-Q(0)\right) w_{\varepsilon,y_i}^{q-1}\varphi\\
   &=: l_{221}+l_{222}.
   \end{aligned}
\end{equation*}
Estimating\ $l_{221}$, we have
\begin{equation}\label{624}
\begin{aligned}
% \nonumber to remove numbering (before each equation)
 | l_{221}| \le &\ \sum_{i=1}^k \int |Q(x)-Q(y_i)| w_{\varepsilon,y_i}^{q-1}|\varphi|\\
   \le &\ \sum_{i=1}^k \bigg(\int |Q(x)-Q(y_i)|^2 w_{\varepsilon,y_i}^{2(q-1)}  \bigg)^{\frac 12}\|\varphi\|_{L^2} \\
   =&\ \sum_{i=1}^k \bigg( \int_{B_{\delta}(y_i)}|Q(x)-Q(y_i)|^2 w_{\varepsilon,y_i}^{2(q-1)}+\int_{B_{\delta}^c(y_i)}|(Q(x)-Q(y_i)|^2 w_{\varepsilon,y_i}^{2(q-1)}\bigg)^{\frac 12}\|\varphi\|_{L^2}\\
   \le&\ C\sum_{i=1}^k \bigg( \varepsilon^3\int_{|y|<\frac{\delta}{\varepsilon}}\Big(\sum_{m=1}^{[h]}\varepsilon^{2m}|D^mQ(y_i)|^2|y|^{2m}+\varepsilon^{2([h]+1)}|y|^{2 ([h]+1)} \Big)w^{2(q-1)}(y)\bigg)^{\frac 12}\|\varphi\|_{L^2}\\
&\ +C\sum_{i=1}^k \bigg(\varepsilon^3 \int_{|y|\ge\frac{\delta}{\varepsilon}}w^{2(q-1)}(y)\bigg)^{\frac 12}\|\varphi\|_{L^2} \\
 \le&\ C\varepsilon^{\frac 32}\sum_{i=1}^k\Big(C_1\sum_{m=1}^{[h]}\varepsilon^{m}|D^mQ(y_i)|+C_2\varepsilon^{[h]+1}+C_3e^{-(q-1)\frac{\eta\delta}{2\varepsilon}}\Big)\|\varphi\|_{L^2}\nonumber\\
\le&\ C\varepsilon^{\frac32}\Big(\sum_{i=1}^k\sum_{m=1}^{[h]}\varepsilon^m|D^mQ(y_i)|+\varepsilon^{[h]+1}\Big)\|\varphi\|_\varepsilon.
\end{aligned}
\end{equation}
Finally, to estimate\ $l_{222}$, combining\ H$\ddot{\mathrm o}$lder inequality and Lemma\ \ref{lm2.1}, we obtain
\begin{equation}\label{625}
\begin{aligned}
% \nonumber to remove numbering (before each equation)
  |l_{222}| &=\left| \int \sum_{i=1}^k \left(Q(y_i)-Q(0)\right) w_{\varepsilon,y_i}^{q-1}\varphi\right|\\
   &\le \sum_{i=1}^k \left|Q(y_i)-Q(0)\right|\int w_{\varepsilon,y_i}^{q-1}|\varphi|
   \le \sum_{i=1}^k\left|Q(y_i)-Q(0)\right|\bigg(\int w_{\varepsilon,y_i}^{q}\bigg)^{\frac{q-1}{q}}\bigg(\int |\varphi|^q\bigg)^{\frac 1q} \\
   &\le C\varepsilon^{\frac 32}\sum_{i=1}^k \left|Q(y_i)-Q(0)\right|\|\varphi\|_\varepsilon.
\end{aligned}
\end{equation}
Combining\ (\ref{622}) and\ (\ref{623})--(\ref{625}) yields\ (\ref{34}).
\end{proof}

\begin{lem}\label{lm3.2}
There exists a constant\ $C>0$, independent
of\ $\varepsilon,\,\delta$, such that for any\ $\varphi\in H_{\varepsilon}$, there holds
\begin{equation}\label{39}
 \| R_{\varepsilon,Y}^{(i)}(\varphi)\|\le Cb\varepsilon^{-\frac 32}\big(1+\varepsilon^{-\frac 32}\|\varphi\|_\varepsilon\big)\|\varphi\|_\varepsilon^{3-i}+C\varepsilon^{-\frac{3(q-2)}{2}}\|\varphi\|_{\varepsilon}^{q-i},\ i=0,1,2.
\end{equation}
\end{lem}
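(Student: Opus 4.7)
The plan is to split $R_{\varepsilon,Y}$ into its Kirchhoff part and the second-order Taylor remainder of the power nonlinearity, and bound each part using the scaled Sobolev embedding \eqref{21}. Explicitly, write $R_{\varepsilon,Y}(\varphi) = A_{\varepsilon,Y}(\varphi) + B_{\varepsilon,Y}(\varphi)$ where
\begin{align*}
A_{\varepsilon,Y}(\varphi) &= \frac{\varepsilon b}{4}\Big(\int|\nabla\varphi|^2\Big)^2 + \varepsilon b\int\nabla W_{\varepsilon,Y}\nabla\varphi\int|\nabla\varphi|^2,\\
B_{\varepsilon,Y}(\varphi) &= -\frac{1}{q}\int Q(x)\Big[(W_{\varepsilon,Y}+\varphi)^q_+ - W_{\varepsilon,Y}^q - qW_{\varepsilon,Y}^{q-1}\varphi - \frac{q(q-1)}{2}W_{\varepsilon,Y}^{q-2}\varphi^2\Big].
\end{align*}

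For $A_{\varepsilon,Y}$, I would compute $A^{(i)}_{\varepsilon,Y}(\varphi)$ for $i=0,1,2$ directly: each derivative is a multilinear form in $\nabla\varphi$, $\nabla W_{\varepsilon,Y}$, and the test directions. Cauchy--Schwarz together with $\|\nabla\varphi\|_{L^2}\le a^{-1/2}\varepsilon^{-1}\|\varphi\|_\varepsilon$ (from the definition of $\|\cdot\|_\varepsilon$) and $\|\nabla W_{\varepsilon,Y}\|_{L^2}\le Ck\varepsilon^{1/2}\|\nabla w\|_{L^2}$ (by the rescaling $w_{\varepsilon,y}(x)=w((x-y)/\varepsilon)$) reduces the two summands of $A_{\varepsilon,Y}$ to the two summands of the claimed bound $Cb\varepsilon^{-3/2}(1+\varepsilon^{-3/2}\|\varphi\|_\varepsilon)\|\varphi\|_\varepsilon^{3-i}$, the $1$ coming from the cubic piece $\varepsilon b\int\nabla W\nabla\varphi\int|\nabla\varphi|^2$ and the $\varepsilon^{-3/2}\|\varphi\|_\varepsilon$ from the quartic piece $\frac{\varepsilon b}{4}(\int|\nabla\varphi|^2)^2$.

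For $B_{\varepsilon,Y}$, I would apply the pointwise Taylor inequality for $g(u)=u_+^q$: when $q\ge 3$, the continuity of $g'''$ gives
\begin{equation*}
\Big|g(W+\varphi)-g(W)-g'(W)\varphi-\tfrac{1}{2}g''(W)\varphi^2\Big|\le C\bigl(W^{q-3}|\varphi|^3+|\varphi|^q\bigr)\le C\bigl(|\varphi|^3+|\varphi|^q\bigr),
\end{equation*}
using that $W_{\varepsilon,Y}$ is uniformly bounded; the corresponding first- and second-derivative versions give integrands of the form $|\varphi|^{2-i}|\psi_1|\cdots|\psi_i|+|\varphi|^{q-i}|\psi_1|\cdots|\psi_i|$ for $i=1,2$. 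Hölder's inequality and Lemma \ref{lm2.1} at exponents $3$ and $q$ yield $\int|\varphi|^3\le C\varepsilon^{-3/2}\|\varphi\|_\varepsilon^3$ and $\int|\varphi|^q\le C\varepsilon^{-3(q-2)/2}\|\varphi\|_\varepsilon^q$, which, distributed over the multilinear terms of $B^{(i)}_{\varepsilon,Y}$, deliver exactly the two summands of \eqref{39}.

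The main obstacle is the subcritical range $2<q<3$, where the Hessian $g''(u)=(q-1)u_+^{q-2}$ is only Hölder continuous, so the integral-remainder form of Taylor's theorem is not directly usable. The fix is the purely algebraic inequality
\begin{equation*}
\Big|(a+b)_+^q - a_+^q - qa_+^{q-1}b - \tfrac{q(q-1)}{2}a_+^{q-2}b^2\Big|\le C_q|b|^q,\qquad \forall\,a,b\in\mathbb R,
\end{equation*}
together with its analogues obtained by one and two formal differentiations in $a$. Each is proved by a case split on the sign and relative magnitude of $a$ and $b$. With these pointwise bounds in hand, the estimate for $B^{(i)}_{\varepsilon,Y}$ in the range $2<q<3$ contains only the $|\varphi|^{q-i}$ term, leading to the $C\varepsilon^{-3(q-2)/2}\|\varphi\|_\varepsilon^{q-i}$ contribution alone. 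The remaining work is careful bookkeeping to collect all the constants and absorb any lower-order mixed terms into the two advertised pieces of \eqref{39}.
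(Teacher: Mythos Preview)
Your approach is essentially identical to the paper's: the same decomposition into the Kirchhoff quartic/cubic piece and the Taylor remainder of $u_+^q$, the same pointwise inequalities (split at $q=3$), and the same use of the scaled Sobolev embedding from Lemma~\ref{lm2.1}. The one slip is the exponent in your displayed integrand for $B_{\varepsilon,Y}^{(i)}$: it should read $|\varphi|^{3-i}|\psi_1|\cdots|\psi_i|$, not $|\varphi|^{2-i}|\psi_1|\cdots|\psi_i|$ (with $2-i$ and $i=2$ you would lose the required $\varepsilon^{-3/2}\|\varphi\|_\varepsilon$ factor); with the corrected exponent, H\"older at exponent $3$ (resp.~$q$) indeed yields $C\varepsilon^{-3/2}\|\varphi\|_\varepsilon^{3-i}$ (resp.~$C\varepsilon^{-3(q-2)/2}\|\varphi\|_\varepsilon^{q-i}$). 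A purely cosmetic difference from the paper is that for $q>3$ the paper extracts the factor $W_{\varepsilon,Y}^{q-3}$ via H\"older and $\big(\int W_{\varepsilon,Y}^{2(q-3)}\big)^{1/2}=O(\varepsilon^{3/2})$ rather than using $\|W_{\varepsilon,Y}\|_{L^\infty}\le C$ as you do; both give the same $\varepsilon^{-3/2}$ scaling.
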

\noindent
\begin{proof}
By\ (\ref{33}),
\begin{equation*}
  R_{\varepsilon,Y}(\varphi)=A_1(\varphi)-A_2(\varphi),
\end{equation*}
where
\begin{equation}\label{310}
   % \nonumber to remove numbering (before each equation)
     A_1(\varphi) = \frac{{\varepsilon b}}{4}{\bigg( {\int {{{\left| {\nabla \varphi } \right|}^2}} } \bigg)^2} + \varepsilon b\int {\nabla {W_{\varepsilon ,Y}}\nabla \varphi } {\int {\left| {\nabla \varphi } \right|} ^2},
     \end{equation}
     and
     \begin{equation}\label{311}
     A_2(\varphi) = \frac{1}{q}\int Q(x)\Big( (W_{\varepsilon,Y}+\varphi)^q_+-W^q_{\varepsilon,Y} - qW_{\varepsilon,Y}^{q-1}\varphi-\frac{q(q-1)}{2} W_{\varepsilon,Y}^{q-2}\varphi^2 \Big).
   \end{equation}
For\ $\forall \ \psi,\xi\in H_\varepsilon$, we obtain
\begin{equation}\label{312}
\begin{aligned}
% \nonumber to remove numbering (before each equation)
   \big\langle A_1^{(1)}(\varphi),\psi\big\rangle =&\ \varepsilon b\bigg(\int|\nabla\varphi|^2 \int\nabla\varphi\nabla\psi+ \int|\nabla\varphi|^2 \int\nabla W_{\varepsilon,Y}\nabla\psi\bigg) \\
   &\ +2\varepsilon b\int\nabla W_{\varepsilon,Y}\nabla\varphi\int\nabla\varphi\nabla\psi,\\
   \end{aligned}
   \end{equation}
   \begin{equation}\label{313}
\big\langle A_2^{(1)}(\varphi),\psi\big\rangle =\int Q(x)\big((W_{\varepsilon,Y}+\varphi)^{q-1}_+-W_{\varepsilon,Y}^{q-1}-(q-1)W_{\varepsilon,Y}^{q-2}\varphi \big)\psi,
 \end{equation}
  \begin{equation}\label{314}
   \begin{aligned}
\big\langle A_1^{(2)}(\varphi)[\psi],\xi\big\rangle =&\ \varepsilon b \bigg(2\int \nabla\varphi \nabla\xi \int \nabla\varphi\nabla\psi+\int |\nabla \varphi|^2 \int\nabla\xi\nabla\psi\bigg)\\
    &\ +2\varepsilon b\bigg( \int \nabla W_{\varepsilon,Y}\nabla\xi \int \nabla\varphi \nabla\psi+\int \nabla W_{\varepsilon,Y}\nabla\varphi \int \nabla\xi \nabla\psi      \bigg)\\
  &\ +2\varepsilon b\int \nabla\varphi \nabla\xi \int \nabla W_{\varepsilon,Y}\nabla\psi,
  \end{aligned}
  \end{equation}
   \begin{equation}\label{315}
  \big\langle A_2^{(2)}(\varphi)[\psi],\xi\big\rangle = (q-1) \int Q(x)\big( (W_{\varepsilon,Y}+\varphi)^{q-2}_+- W_{\varepsilon,Y}^{q-2} \big)\psi\xi.
\end{equation}
%%%%\begin{eqnarray}\label{312}
% \nonumber to remove numbering (before each equation)
%%%%  \langle A_1^{'}(\varphi),\psi\rangle &=& \varepsilon b\bigg(\int|\nabla\varphi|^2 \int\nabla\varphi\nabla\psi+ \int|\nabla\varphi|^2 \int\nabla W_{\varepsilon,Y}\nabla\psi\bigg) \nonumber\\
 %%%%  && +2\varepsilon b\int\nabla W_{\varepsilon,Y}\nabla\varphi\int\nabla\varphi\nabla\psi,
%%%%\end{eqnarray}
%%%%\begin{eqnarray}
% \nonumber to remove numbering (before each equation)
 %%%%\langle A_2^{'}(\varphi),\psi\rangle = \int Q(x)\big(|W_{\varepsilon,Y}+\varphi|^{q-2}(W_{\varepsilon,Y}+\varphi)-W_{\varepsilon,Y}^{q-1}-(q-1)W_{\varepsilon,Y}^{q-2}\varphi \big)\psi,
%%%%\end{eqnarray}
%%%%\begin{eqnarray}\label{314}
% \nonumber to remove numbering (before each equation)
 %%%% \langle A_1^{''}(\varphi)[\psi],\xi\rangle &=& \varepsilon b \Big(2\int \nabla\varphi \nabla\xi \int \nabla\varphi\nabla\psi+\int |\nabla \varphi|^2 \nabla\xi\nabla\psi\Big)\nonumber\\
  %%%%  &  &+2\varepsilon b\Big( \int \nabla W_{\varepsilon,Y}\nabla\xi \int \nabla\varphi \nabla\psi+\int \nabla W_{\varepsilon,Y}\nabla\varphi \int \nabla\xi \nabla\psi      \Big)\nonumber\\
%%%%  &  & +2\varepsilon b\int \nabla\varphi \nabla\xi \int \nabla W_{\varepsilon,Y}\nabla\psi,
%%%%\end{eqnarray}
%%%%\begin{eqnarray}
% \nonumber to remove numbering (before each equation)
%%%% \langle A_2^{''}(\varphi)[\psi],\xi\rangle &=& (q-1) \int Q(x)\Big( |W_{\varepsilon,Y}+\varphi|^{q-2}- W_{\varepsilon,Y}^{q-2} \Big)\psi\xi.
%%%%\end{eqnarray}
Next, we estimate\ $A_1^{(i)}(\varphi),\ i=0,1,2$. Note that
\begin{equation}\label{316}
  \|\nabla\varphi\|_{L^2}\le\frac{1}{\sqrt{a}\varepsilon}\|\varphi\|_\varepsilon,
\end{equation}
and
\begin{equation*}
  \int |\nabla W_{\varepsilon,Y}|^2 \le k\sum_{i=1}^k\int|\nabla w_{\varepsilon,y_i}|^2 = k^2\varepsilon\int |\nabla w|^2,
   \end{equation*}
we have
\begin{equation}\label{317}
  \|\nabla W_{\varepsilon,Y}\|_{L^2}\le C_1 \varepsilon^{\frac 12},
  \end{equation}
where $C_1=k\|\nabla w\|_{L^2}$. Combining\ (\ref{316}) and (\ref{317}), we obtain that for\ $\forall \ \psi,\xi,\nu\in H_\varepsilon$, there hold
\begin{equation}\label{318}
% \nonumber to remove numbering (before each equation)
  \int |\nabla\varphi\nabla\psi |\int| \nabla W_{\varepsilon,Y}\nabla\xi |\le C\varepsilon^{-\frac 52} \|\varphi\|_\varepsilon \|\psi\|_\varepsilon \|\xi\|_\varepsilon  ,   \\
  \end{equation}
  \begin{equation}\label{319}
   \int |\nabla\varphi\nabla\psi |\int |\nabla\nu \nabla\xi |\le C\varepsilon^{-4} \|\varphi\|_\varepsilon \|\psi\|_\varepsilon \|\nu\|_\varepsilon \|\xi\|_\varepsilon.
\end{equation}
Combining\ (\ref{310}), (\ref{312}), (\ref{314}), (\ref{318}) and (\ref{319}) yields
\begin{equation}\label{320}
\begin{aligned}
% \nonumber to remove numbering (before each equation)
  \|A_1^{(i)}(\varphi)\| &\le Cb\varepsilon^{-\frac 32}\|\varphi\|_\varepsilon^{3-i}+Cb\varepsilon^{-3}\|\varphi\|_\varepsilon^{4-i}  \\
   &\le Cb\varepsilon^{-\frac 32}\|\varphi\|_\varepsilon^{3-i}\big(\varepsilon^{-\frac 32}\|\varphi\|_\varepsilon+1\big).
\end{aligned}
\end{equation}
Then, we estimate\ $A_2^{(i)}(\varphi),\ i=0,1,2$. For~$2<q\le3$, we apply the following elementary inequalities: for~$e,f\in\mathbb{ R}$~, there exist constants~$C_1(q),~C_2(q),~C_3(q)>0$ such that
\begin{equation*}
|(e+f)_+^{q}-e_+^{q}-qe_+^{q-1}f-\frac{q(q-1)}{2}e_+^{q-2}f^2|\leq C_1(q)|f|^{q},
\end{equation*}
\begin{equation*}
|(e+f)_+^{q-1}-e_+^{q-1}-(q-1)e_+^{q-2}f|\leq C_2(q)|f|^{q-1},
\end{equation*}
and
\begin{equation*}
|(e+f)_+^{q-2}-e_+^{q-2}|\leq C_3(q)|f|^{q-2}.
\end{equation*}
Combining the above inequalities and Lemma\ \ref{lm2.1} yields
\begin{equation}\label{321}
|A_2(\varphi)|\le C\varepsilon^{-\frac{3(q-2)}{2}}\|\varphi\|_\varepsilon^{q},
\end{equation}
\begin{equation}\label{322}
\|A_2^{(1)}(\varphi)\|\le C\varepsilon^{-\frac{3(q-2)}{2}}\|\varphi\|_\varepsilon^{q-1},
\end{equation}
\begin{equation}\label{323}
\|A_2^{(2)}(\varphi)\|\le C\varepsilon^{-\frac{3(q-2)}{2}}\|\varphi\|_\varepsilon^{q-2}.
\end{equation}
Similarly, for~$3<q<6$ and $e,f\in\mathbb{ R}$, there exist constants~$C'_1(q),~C'_2(q),~C'_3(q)>0$ such that
\begin{equation*}
|(e+f)_+^{q}-e_+^{q}-qe_+^{q-1}f-\frac{q(q-1)}{2}e_+^{q-2}f^2|\leq C'_1(q)(|e|^{q-3}+|f|^{q-3})|f|^3
\end{equation*}
\begin{equation*}
|(e+f)_+^{q-1}-e_+^{q-1}-(q-1)e_+^{q-2}f|\leq C'_2(q)(|e|^{q-3}+|f|^{q-3})|f|^2,
\end{equation*}
and
\begin{equation*}
|(e+f)_+^{q-2}-e_+^{q-2}|\leq C'_3(q)(|e|^{q-3}+|f|^{q-3})|f|.
\end{equation*}
Combining the above inequalities and Lemma\ \ref{lm2.1} yields
\begin{equation}\label{600}
\begin{aligned}
|A_2(\varphi)|&\le C'_1(q)\int\big(|W_{\varepsilon,Y}|^{q-3}+|\varphi|^{q-3}\big)|\varphi|^3\\
&\le C\bigg(\int|W_{\varepsilon,Y}|^{2(q-3)}\bigg)^\frac{1}{2}\varepsilon^{-3}\|\varphi\|_\varepsilon^3+ C\varepsilon^{-\frac{3(q-2)}{2}}\|\varphi\|_\varepsilon^{q}\\
&\le C\big(\varepsilon^{-\frac{3}{2}}\|\varphi\|_\varepsilon^3+\varepsilon^{-\frac{3(q-2)}{2}}\|\varphi\|_\varepsilon^{q}\big).
\end{aligned}
\end{equation}
By the same token, we obtain
\begin{equation}\label{601}
\|A_2^{(1)}(\varphi)\|\le C\big(\varepsilon^{-\frac{3}{2}}\|\varphi\|_\varepsilon^2+\varepsilon^{-\frac{3(q-2)}{2}}\|\varphi\|_\varepsilon^{q-1}\big),
\end{equation}
\begin{equation}\label{602}
\|A_2^{(2)}(\varphi)\|\le C\big(\varepsilon^{-\frac{3}{2}}\|\varphi\|_\varepsilon+\varepsilon^{-\frac{3(q-2)}{2}}\|\varphi\|_\varepsilon^{q-2}\big).
\end{equation}
Combining\ (\ref{320})--(\ref{602}) yields\ (\ref{39}).
\end{proof}

To state the lemma that follows, we define
\begin{equation}\label{500}
\begin{split}
N_{\varepsilon}=\bigg\{ \varphi\in E_{\varepsilon,Y}^k:\|\varphi\|_{\varepsilon}\le& \varepsilon^{\frac32} \Big(\sum_{i=1}^k\left|Q(y_i)-Q(0)\right|^{1-\tau}+\sum_{i=1}^k\sum_{m=1}^{[h]}\varepsilon^{m-\tau}|D^mQ(y_i)|^{1-\tau} \\
&+\varepsilon^{[h]+1-\tau}+\sum_{i\not=j}e^{-(\bar{\theta}-\tau)\frac{\eta|y_i-y_j|}{\varepsilon}}\Big)\bigg\},
\end{split}
\end{equation}
where\ $0<\tau<\min\{1,\bar\theta \}$.
\begin{lem}\label{lm3.5}
There exist $\varepsilon_0,\ \delta_0$ sufficiently small such that for every\ $\varepsilon\in(0,\varepsilon_0]$ and $\delta\in(0,\delta_0]$, there exists a\ $C^1$ map\ $\varphi_{\varepsilon}:D_{\varepsilon,\delta}^k \to N_\varepsilon;\ Y \mapsto  \varphi_{\varepsilon,Y}$ satisfying
\begin{equation}\label{400}
  \left\langle  \frac{\partial J_{\varepsilon} (Y,\varphi_{\varepsilon,Y})   }{\partial \varphi} ,\psi \right\rangle=0,\quad\forall\ \psi\in H_\varepsilon,\ \forall\ Y\in D_{\varepsilon,\delta}^k.
\end{equation}
Moreover, we can choose\ $0<\tau<\min\{1,\bar\theta \}$ sufficiently small, such that
\begin{equation}\label{401}
\|\varphi_{\varepsilon,Y}\|_{\varepsilon}\le \varepsilon^{\frac32} \Big(\sum_{i=1}^k\left|Q(y_i)-Q(0)\right|^{1-\tau}+\sum_{i=1}^k\sum_{m=1}^{[h]}\varepsilon^{m-\tau}|D^mQ(y_i)|^{1-\tau}+\varepsilon^{[h]+1-\tau}+\sum_{i\not=j}e^{-(\bar{\theta}-\tau)\frac{\eta|y_i-y_j|}{\varepsilon}}\Big).
\end{equation}
\end{lem}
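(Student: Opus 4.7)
The plan is to realize $\varphi_{\varepsilon,Y}$ as the unique fixed point of a contraction on $N_\varepsilon$ and then appeal to the implicit function theorem for smoothness in $Y$. Testing (\ref{400}) against $\psi\in E_{\varepsilon,Y}^k$ and using the expansion (\ref{120}) recasts the equation as
\[
\langle \mathcal L_{\varepsilon,Y}\varphi,\psi\rangle_\varepsilon=-l_{\varepsilon,Y}(\psi)-\langle R^{(1)}_{\varepsilon,Y}(\varphi),\psi\rangle,\quad \forall\,\psi\in E_{\varepsilon,Y}^k.
\]
By Lemma \ref{lm3.4}, $\mathcal L_{\varepsilon,Y}$ is invertible on $E_{\varepsilon,Y}^k$ with $\|\mathcal L_{\varepsilon,Y}^{-1}\|\le \rho^{-1}$; writing $P_{\varepsilon,Y}$ for the $\langle\cdot,\cdot\rangle_\varepsilon$-orthogonal projector onto $E_{\varepsilon,Y}^k$ and identifying $l_{\varepsilon,Y}$ and $R^{(1)}_{\varepsilon,Y}(\varphi)$ with their Riesz representatives in $H_\varepsilon$, the problem becomes the fixed-point equation
\[
\varphi=T_{\varepsilon,Y}(\varphi):=-\mathcal L_{\varepsilon,Y}^{-1}P_{\varepsilon,Y}\bigl(l_{\varepsilon,Y}+R^{(1)}_{\varepsilon,Y}(\varphi)\bigr).
\]

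Next I would verify that $T_{\varepsilon,Y}$ self-maps $N_\varepsilon$ and is $\tfrac12$-Lipschitz. Abbreviate
\[
\Lambda(Y)=\sum_{i=1}^k|Q(y_i)-Q(0)|+\sum_{i=1}^k\sum_{m=1}^{[h]}\varepsilon^m|D^mQ(y_i)|+\varepsilon^{[h]+1}+\sum_{i\neq j}e^{-\bar\theta\eta|y_i-y_j|/\varepsilon},
\]
and denote by $\Lambda^{\sharp}(Y)$ the analogous quantity with exponents $1-\tau$ appearing in the definition of $N_\varepsilon$, so that $\|\varphi\|_\varepsilon\le \varepsilon^{3/2}\Lambda^{\sharp}(Y)$ for $\varphi\in N_\varepsilon$. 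Using $(Q_3)$ (which yields $|Q(y_i)-Q(0)|=O(\delta^h)$), the boundedness of $D^mQ$, and the inter-peak bound $\eta|y_i-y_j|/\varepsilon\ge R_1$ in $D_{\varepsilon,\delta}^k$, each summand of $\Lambda^{\sharp}(Y)$ can be made arbitrarily small by choosing $\delta_0,\varepsilon_0$ small and $R_1$ large, and moreover $\Lambda(Y)\le \Lambda^{\sharp}(Y)$ term by term with an extra small factor $\max(\delta^{h\tau},\varepsilon^\tau,e^{-\tau R_1})$. Lemma \ref{lm3.1} then gives $\|l_{\varepsilon,Y}\|\le C\varepsilon^{3/2}\Lambda(Y)$, while Lemma \ref{lm3.2} with $i=1$ bounds $\|R^{(1)}_{\varepsilon,Y}(\varphi)\|$ by $C\varepsilon^{3/2}\bigl(\Lambda^{\sharp}(Y)^2+\Lambda^{\sharp}(Y)^{q-1}\bigr)$ for $\varphi\in N_\varepsilon$ (after using $\varepsilon^{-3/2}\|\varphi\|_\varepsilon\le \Lambda^{\sharp}(Y)\le 1$); combining these with $q>2$ and the smallness of $\Lambda^{\sharp}(Y)$ yields
\[
\|T_{\varepsilon,Y}(\varphi)\|_\varepsilon\le \rho^{-1}\bigl(C\varepsilon^{3/2}\Lambda(Y)+C\varepsilon^{3/2}(\Lambda^{\sharp}(Y)^2+\Lambda^{\sharp}(Y)^{q-1})\bigr)\le \varepsilon^{3/2}\Lambda^{\sharp}(Y),
\]
so $T_{\varepsilon,Y}(N_\varepsilon)\subset N_\varepsilon$. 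The contraction estimate follows from Lemma \ref{lm3.2} with $i=2$: since $\|R^{(2)}_{\varepsilon,Y}(\varphi)\|=o(1)$ uniformly on $N_\varepsilon$, the mean value theorem on the convex set $N_\varepsilon$ gives $\|T_{\varepsilon,Y}(\varphi_1)-T_{\varepsilon,Y}(\varphi_2)\|_\varepsilon\le \tfrac12\|\varphi_1-\varphi_2\|_\varepsilon$.

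The Banach fixed-point theorem then produces a unique $\varphi_{\varepsilon,Y}\in N_\varepsilon$ solving (\ref{400}), and (\ref{401}) is built into $N_\varepsilon$. The $C^1$ dependence on $Y$ follows from the implicit function theorem applied to the map $(Y,\varphi)\mapsto \varphi-T_{\varepsilon,Y}(\varphi)$: its partial $\varphi$-derivative at $(Y,\varphi_{\varepsilon,Y})$ equals $\mathrm{Id}+\mathcal L_{\varepsilon,Y}^{-1}P_{\varepsilon,Y}R^{(2)}_{\varepsilon,Y}(\varphi_{\varepsilon,Y})$, which is an $o(1)$-perturbation of the identity by the Lipschitz argument, hence invertible. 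The main technical obstacle is the bookkeeping around $\tau$: Lemma \ref{lm3.1} produces exponents $1$, whereas $N_\varepsilon$ uses exponents $1-\tau$, and the slack $\tau\in(0,\min\{1,\bar\theta\})$ is precisely what absorbs the operator-norm constant $\rho^{-1}$ together with the quadratic and $(q-1)$-th order tails of $R^{(1)}_{\varepsilon,Y}$ into the self-mapping inclusion; one fixes $\tau$ small, then chooses $\varepsilon_0,\delta_0$ small and $R_1$ large so that $\Lambda^{\sharp}(Y)$ is uniformly small over $D_{\varepsilon,\delta}^k$, and the scheme closes.
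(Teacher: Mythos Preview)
Your proposal is correct and follows essentially the same route as the paper: set up the fixed-point equation $\varphi=-\mathcal L_{\varepsilon,Y}^{-1}\bigl(l_{\varepsilon,Y}+R^{(1)}_{\varepsilon,Y}(\varphi)\bigr)$ on $E_{\varepsilon,Y}^k$, use Lemma~\ref{lm3.1} together with the $\tau$-slack to get the self-mapping bound, use Lemma~\ref{lm3.2} with $i=2$ and the mean value theorem for the contraction, and conclude $C^1$ dependence by the implicit function theorem (the paper cites \cite{6} for this last step rather than spelling it out). Your explicit bookkeeping with $\Lambda(Y)$ versus $\Lambda^{\sharp}(Y)$ and the projector $P_{\varepsilon,Y}$ is slightly cleaner than the paper's presentation, but the argument is the same.
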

\noindent
\begin{proof}
Recall that
\begin{equation*}
  J_{\varepsilon} (Y,\varphi)=J_{\varepsilon} (Y,0)+\langle I_\varepsilon^{'}(W_{\varepsilon,Y}) ,\varphi \rangle+\frac 12 \langle I_\varepsilon^{''}(W_{\varepsilon,Y})[\varphi],\varphi  \rangle+R_{\varepsilon,Y}(\varphi),
\end{equation*}
so we have
\begin{equation*}
 \left\langle \frac{\partial J_\varepsilon}{\partial\varphi} ,\psi  \right\rangle=\langle I_\varepsilon^{'}(W_{\varepsilon,Y}) ,\psi \rangle+\langle I_\varepsilon^{''}(W_{\varepsilon,Y})[\varphi],\psi  \rangle+\langle R_{\varepsilon,Y}^{'}(\varphi) ,\psi \rangle,\ \forall\ \psi\in H_\varepsilon,
\end{equation*}
i.e.
\begin{equation}\label{346}
\begin{aligned}
% \nonumber to remove numbering (before each equation)
  \frac{\partial J_\varepsilon}{\partial\varphi} &= I_\varepsilon^{'}(W_{\varepsilon,Y})+I_\varepsilon^{''}(W_{\varepsilon,Y})[\varphi]+ R_{\varepsilon,Y}^{'}(\varphi)\\
   &= l_{\varepsilon,Y}+I_\varepsilon^{''}(W_{\varepsilon,Y})[\varphi]+ R_{\varepsilon,Y}^{'}(\varphi).
\end{aligned}
\end{equation}
Then $\frac{\partial J_\varepsilon}{\partial\varphi}$ is a bounded linear functional in\ $N_\varepsilon$. Denote
\begin{equation*}
  \mathfrak{W}=\{f: f \text{ is a bounded linear functional defined on\ $H_\varepsilon$}\}.
\end{equation*}
For\ $\forall f\in \mathfrak{W}$, by Riesz representation theorem, there exists a unique\ $\hat{f}\in H_\varepsilon$ such that
\begin{equation*}
  f(\psi)= \langle \hat{f},\psi  \rangle_\varepsilon,\quad \forall\ \psi\in H_\varepsilon.
\end{equation*}
So we can define a map\ $\sigma:\mathfrak{W}\to H_\varepsilon;\ f\mapsto \hat{f} $.

Let\ $\mathfrak{W}^*=\sigma (\mathfrak{W})$.
 %then\ $\sigma$ is bijective from\ $\mathfrak{W}$ to $\mathfrak{W}^*$.
 Next, we prove\ $\sigma$ is a linear isomorphic map from\ $\mathfrak{W}$ to $\mathfrak{W}^*$.
In fact, if\ $\sigma(f_1)=\sigma(f_2)$, in the sense that\ $\hat{f_1}=\hat{f_2}$, we obtain
\begin{equation*}
   f_1(\psi)=\langle \hat{f_1},\psi  \rangle_\varepsilon=\langle \hat{f_2},\psi  \rangle_\varepsilon=f_2(\psi),\quad \forall\ \psi\in H_\varepsilon.
\end{equation*}
Then\ $f_1=f_2$ and $\sigma$ is injective.  Besides, for\ $\forall f_1,f_2\in \mathfrak{W}$,
\begin{equation*}
  \langle \widehat{f_1+f_2},\psi   \rangle_\varepsilon=(f_1+f_2)(\psi)=f_1(\psi)+f_2(\psi)=\langle \hat{f_1} ,\psi  \rangle_\varepsilon+\langle \hat{f_2},\psi    \rangle_\varepsilon=\langle \hat{f_1}+\hat{f_2}    ,\psi \rangle_\varepsilon,
\end{equation*}
which implies\ $\widehat{f_1+f_2}=\hat{f_1}+\hat{f_2}$, in the sense that $\sigma(f_1+f_2)=\sigma(f_1)+\sigma(f_2)$.\\
And for\ $\forall k \in\mathbb R,\ f\in \mathfrak{W}$, we obtain
\begin{equation*}
  \langle \widehat{kf},\psi   \rangle_\varepsilon=(kf)(\psi) =kf(\psi)  = k \langle  \hat f  ,\psi   \rangle_\varepsilon= \langle   k\hat f ,\psi   \rangle_\varepsilon.
\end{equation*}
Thus,\ $\widehat{kf}=k\hat{f}$ and\ $\sigma(kf)=k\sigma(f)$.
%Hence, $\sigma$:\ $\mathfrak{W} \to\ \mathfrak{W}^*$ is an isomorphic map.

Therefore,\ (\ref{346}) is equivalent to
\begin{equation}\label{347}
  \hat{\frac{\partial J_\varepsilon}{\partial\varphi}}=\hat l_{\varepsilon,Y}+\mathcal L_{\varepsilon,Y}(\varphi)+\hat R_{\varepsilon,Y}^{'}(\varphi).
\end{equation}
Since $\mathcal L_{\varepsilon,Y}$ is invertible in $E_{\varepsilon,Y}^k$ by Lemma \ref{lm3.4}, it is sufficient to find $\varphi\in N_\varepsilon$ that satisfies
\begin{equation}\label{349}
 \varphi=-\mathcal L_{\varepsilon,Y}^{-1}(\hat l_{\varepsilon,Y})-\mathcal L_{\varepsilon,Y}^{-1}(\hat R_{\varepsilon,Y}^{'}(\varphi))=:\mathcal A_{\varepsilon,Y}(\varphi).
\end{equation}
Next, We prove that\ $\mathcal A_{\varepsilon,Y}$ is a contraction map on\ $N_\varepsilon$. First, for\ $\forall \varphi\in N_\varepsilon$, we have
\begin{equation}\label{350}
\begin{aligned}
% \nonumber to remove numbering (before each equation)
 \|\mathcal A_{\varepsilon,Y}(\varphi)\|_\varepsilon &\le \frac 1\rho \|   \hat l_{\varepsilon,Y}  \|_\varepsilon+\frac 1\rho \|  \hat R_{\varepsilon,Y}^{'}(\varphi)   \|_\varepsilon \\
   &= \frac 1\rho \|    l_{\varepsilon,Y}  \|+\frac 1\rho \|   R_{\varepsilon,Y}^{'}(\varphi)   \|.
\end{aligned}
\end{equation}
By Lemma\ \ref{lm3.1}, we obtain
\begin{equation*}
  \|l_{\varepsilon,Y}\|\le C\varepsilon^{\frac32}\Big(\varepsilon^{[h]+1}+\sum_{i=1}^k|Q(y_i)-Q(0)|+\sum_{i=1}^k\sum_{m=1}^{[h]}\varepsilon^m|D^mQ(y_i)|+\sum_{i\not=j}e^{-\bar\theta\frac{\eta|y_i-y_j|}{\varepsilon}}\Big).  \end{equation*}
Choose\ $\varepsilon$, $\delta$ sufficiently small such that
\begin{equation*}
  \left\{
     \begin{array}{ll}
       C\varepsilon^{\tau} < \frac \rho 2, &  \\
       C|Q(y_i)-Q(0)|^{\tau} < \frac \rho 2, & \hbox{$i=1,\ldots,k$,} \\
       C\varepsilon^{\tau}|D^mQ(y_i)|^{\tau} < \frac \rho 2, & \hbox{$i=1,\ldots,k$,}\ m=1,\ldots,[h],\\
       Ce^{-\tau\frac{\eta|y^i-y^j|}{\varepsilon}}< \frac \rho 2, & \hbox{$i\not=j$,}
     \end{array}
   \right.
\end{equation*}
Then
\begin{equation}\label{351}
\begin{aligned}
   \|l_{\varepsilon,Y}\|\le&\ \frac {\rho}2 \varepsilon^{\frac32}\Big(\sum_{i=1}^k|Q(y_i)-Q(0)|^{1-\tau}+\sum_{i=1}^k\sum_{m=1}^{[h]}\varepsilon^{m-\tau}|D^mQ(y_i)|^{1-\tau}\Big)\\
   &\
   +\frac {\rho}2 \varepsilon^{\frac32}\Big(\varepsilon^{[h]+1-\tau}+\sum_{i\not=j}e^{-(\bar\theta-\tau)\frac{\eta|y_i-y_j|}{\varepsilon}}\Big).
\end{aligned}
\end{equation}
As\ $\varphi\in N_\varepsilon$,
\begin{equation*}
  \varepsilon^{-\frac 32}\|\varphi\|_\varepsilon=o_{\varepsilon}(1)+o_\delta(1).
\end{equation*}
So for\ $\varepsilon$, $\delta$ sufficiently small, by Lemma\ \ref{lm3.2}, we have
\begin{equation}\label{352}
 \|R_{\varepsilon,Y}^{'}(\varphi)\|=\left(o_{\varepsilon}(1)+o_\delta(1)\right)\|\varphi\|_\varepsilon \le \frac \rho 2\|\varphi\|_\varepsilon.
\end{equation}
Combining\ (\ref{350})--(\ref{352}) yields
\begin{equation*}
\begin{aligned}
  \|\mathcal A_{\varepsilon,Y}(\varphi)\|_\varepsilon \le&\ \varepsilon^{\frac32}\Big(\sum_{i=1}^k|Q(y_i)-Q(0)|^{1-\tau}+\sum_{i=1}^k\sum_{m=1}^{[h]}\varepsilon^{m-\tau}|D^mQ(y_i)|^{1-\tau}\Big)\nonumber\\
   &\ +\varepsilon^{\frac32}\Big(\varepsilon^{[h]+1-\tau}+\sum_{i\not=j}e^{-(\bar\theta-\tau)\frac{\eta|y_i-y_j|}{\varepsilon}}\Big).
\end{aligned}
\end{equation*}
Hence, \ $\mathcal A_{\varepsilon,Y}(N_\varepsilon)\subset N_\varepsilon$. On the other hand, for every\ $\varphi,\psi\in N_\epsilon$,
\begin{equation*}
\begin{aligned}
% \nonumber to remove numbering (before each equation)
   \|\mathcal A_{\varepsilon,Y}(\varphi)-\mathcal A_{\varepsilon,Y}(\psi)\|_\varepsilon  &= \|\mathcal L_{\varepsilon,Y}^{-1}\big(\hat R_{\varepsilon,Y}^{'}(\varphi)\big)-\mathcal L_{\varepsilon,Y}^{-1}\big(\hat R_{\varepsilon,Y}^{'}(\psi)\big)\|_\varepsilon \\
  &\le \frac 1\rho\| R_{\varepsilon,Y}^{'}(\varphi)- R_{\varepsilon,Y}^{'}(\psi) \| \\
  &= \frac 1\rho \| R_{\varepsilon,Y}^{''}(\xi \varphi+(1-\xi)\psi)\|\| \varphi-\psi\|_\varepsilon,\quad 0<\xi<1.
\end{aligned}
\end{equation*}
By Lemma\ \ref{lm3.2}, we obtain
\begin{equation*}
\begin{aligned}
% \nonumber to remove numbering (before each equation)
  \| R_{\varepsilon,Y}^{''}(\xi \varphi+(1-\xi)\psi)\| \le&\ C\varepsilon^{-\frac{3(q-2)}{2}}\|\xi \varphi+(1-\xi)\psi\|_\varepsilon^{q-2}\nonumber\\
  &\ +Cb \varepsilon^{-\frac32}\big(1+\varepsilon^{-\frac32}\|\xi \varphi+(1-\xi)\psi\|_\varepsilon\big) \|\xi \varphi+(1-\xi)\psi\|_\varepsilon\\
   =&\ o_\varepsilon(1).
\end{aligned}
\end{equation*}
Thus, for\ $\varepsilon$ sufficiently small, we have
\begin{equation*}
 \|\mathcal A_{\varepsilon,Y}(\varphi)-\mathcal A_{\varepsilon,Y}(\psi)\|_\varepsilon \le \frac 12 \|\varphi-\psi\|_\varepsilon.
\end{equation*}
So\ $\mathcal A_{\varepsilon,Y}$ is a contraction map on\ $N_\varepsilon$. By contraction mapping principle, we infer that\ (\ref{349}) has a unique solution. Finally, by similar arguments as that
of Cao, Noussair and Yan\ \cite{6}, we can deduce that\ $\varphi_\varepsilon$ belongs to\ $C^1$.
\end{proof}

\section{Proof of Theorem\ \ref{thm1.1}}
In this section, without loss of generality, we assume\ $Q(0)=1$. By Lemma\ \ref{lm3.5}, we can define a \ $C^1$ function on\ $D^k_{\varepsilon.\delta}$, in the sense that
\begin{equation*}
  K(Y)=:J_\varepsilon(Y,\varphi_{\varepsilon,Y}),\ Y\in D^k_{\varepsilon,\delta}.
\end{equation*}

Define
\begin{equation*}
 c_{\varepsilon,1}=\varepsilon^3(kA-k^2B-T\varepsilon^{\alpha h_1}),\quad c_{\varepsilon,2}=\varepsilon^3(kA-k^2B+\mu),
\end{equation*}
where\ $A=\frac{q-2}{2q}\|w\|_{L^q}^q,\ B=\frac{b}{4}\|\nabla w\|_{L^2}^4$,\ $\mu,\ T$ are positive constants,\ $\varepsilon^\alpha \le \frac{\delta}{2}$ and $\alpha\in (0,1)$ is a fixed constant close to\ $1$.

Denote
\begin{equation*}
{\Omega _\gamma }=\Big\{Y = ({y_1}, \ldots ,{y_k}):\ {y_i} \in B_\delta ^t(0) \times B_\gamma ^{3 - t}(0),\ i = 1, \ldots ,k,\ \frac{{\eta |{y_i} - {y_j}|}}{\varepsilon } \ge {R_1},\ i \ne j\Big\},
\end{equation*}
where\ $B_\tau^l(0)=\{y\in \R^l:\ |y|\le \tau\}$,\ $R_1>0 $ is a large constant, and
\begin{equation*}
K^c=\big\{Y:\ Y\in \Omega_{\varepsilon^\alpha},\ K(Y)\le c\big\}.
\end{equation*}

\begin{lem}\label{lm3.6}
 For any\ $\varphi\in E^k_{\varepsilon,Y}$, there holds
 \begin{equation}\label{360}
  \langle \mathcal {L}_{\varepsilon,Y}\varphi,\varphi \rangle_\varepsilon=O(\|\varphi\|_\varepsilon^2).
 \end{equation}
\end{lem}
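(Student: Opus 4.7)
The plan is to set $\varphi_1=\varphi_2=\varphi$ in the defining identity (\ref{399}) so that
\[
  \langle \mathcal{L}_{\varepsilon,Y}\varphi,\varphi\rangle_\varepsilon
  = \|\varphi\|_\varepsilon^2
  + \varepsilon b\int|\nabla W_{\varepsilon,Y}|^2\int|\nabla\varphi|^2
  + 2\varepsilon b\Bigl(\int\nabla W_{\varepsilon,Y}\nabla\varphi\Bigr)^{\!2}
  - (q-1)\int Q(x)W_{\varepsilon,Y}^{q-2}\varphi^2,
\]
and bound each of the four pieces separately by $C\|\varphi\|_\varepsilon^2$ using inequalities already proved earlier in the paper. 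The orthogonality encoded in $E^k_{\varepsilon,Y}$ plays no role here; only size estimates are needed.

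First I would note that the opening term equals $\|\varphi\|_\varepsilon^2$. For the second and third (nonlocal) terms I would combine the elementary bound $\|\nabla\varphi\|_{L^2}\le(\sqrt{a}\,\varepsilon)^{-1}\|\varphi\|_\varepsilon$ from (\ref{316}) with the sum bound $\|\nabla W_{\varepsilon,Y}\|_{L^2}^2\le C_1^2\,\varepsilon$ from (\ref{317}). For the second term this gives directly
\[
  \varepsilon b\int|\nabla W_{\varepsilon,Y}|^2\int|\nabla\varphi|^2
  \le \varepsilon b\cdot C_1^2\varepsilon\cdot\frac{1}{a\varepsilon^2}\|\varphi\|_\varepsilon^2
  = C\|\varphi\|_\varepsilon^2,
\]
and for the third an application of Cauchy--Schwarz followed by the same two bounds yields the same order of magnitude.

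For the remaining fourth term I would apply H\"older's inequality with conjugate exponents $q/(q-2)$ and $q/2$ to obtain
\[
  \int Q(x)W_{\varepsilon,Y}^{q-2}\varphi^2
  \le C\|W_{\varepsilon,Y}\|_{L^q}^{q-2}\|\varphi\|_{L^q}^{2}.
\]
Rescaling each $w_{\varepsilon,y_i}$ gives $\|W_{\varepsilon,Y}\|_{L^q}\le k\varepsilon^{3/q}\|w\|_{L^q}$, while Lemma~\ref{lm2.1} yields $\|\varphi\|_{L^q}^2\le C\varepsilon^{6/q-3}\|\varphi\|_\varepsilon^2$. Multiplying the two powers of $\varepsilon$ produces $\varepsilon^{3(q-2)/q+6/q-3}=\varepsilon^0$, so this contribution is also $O(\|\varphi\|_\varepsilon^2)$.

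There is no genuine obstacle: the lemma is purely a bookkeeping statement that the nonlocal and nonlinear corrections in $\mathcal{L}_{\varepsilon,Y}$ do not degrade the order of the quadratic form with respect to the $\|\cdot\|_\varepsilon$-norm. The only point that requires minor care is checking that the $\varepsilon$ powers balance in the H\"older estimate for the nonlinear term, which is precisely the scaling built into Lemma~\ref{lm2.1}.
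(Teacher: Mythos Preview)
Your proof is correct and follows essentially the same approach as the paper: you expand $\langle\mathcal L_{\varepsilon,Y}\varphi,\varphi\rangle_\varepsilon$ via (\ref{399}), then bound the two nonlocal terms using (\ref{316})--(\ref{317}) and the nonlinear term by H\"older with exponents $q/(q-2)$, $q/2$ together with Lemma~\ref{lm2.1}. The paper's proof is identical in structure and detail; your observation that the orthogonality in $E^k_{\varepsilon,Y}$ is irrelevant here is also accurate.
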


\noindent
\begin{proof}
By the definition of\ $\mathcal {L}_{\varepsilon,Y}$, we have
\begin{equation}\label{361}
\begin{aligned}
% \nonumber to remove numbering (before each equation)
  \langle \mathcal {L}_{\varepsilon,Y}\varphi,\varphi \rangle_\varepsilon
   =&\ \langle \varphi,\varphi    \rangle_\varepsilon+\varepsilon b\int|\nabla W_{\varepsilon,Y}|^2\int|\nabla \varphi|^2\\
   &\ +2\varepsilon b\bigg( \int \nabla W_{\varepsilon,Y} \nabla \varphi \bigg)^2-(q-1)\int Q(x)W_{\varepsilon,Y}^{q-2}\varphi^2.
\end{aligned}
\end{equation}
Calculating directly yields
\begin{equation}\label{362}
% \nonumber to remove numbering (before each equation)
 \varepsilon b\int|\nabla W_{\varepsilon,Y}|^2\int|\nabla \varphi|^2 \le\varepsilon bk\int\sum_{i=1}^k|\nabla w_{\varepsilon,y_i}|^2\int|\nabla \varphi|^2
   \le C\|\varphi\|_\varepsilon^2.
 \end{equation}
By\ H\"older inequality, we obtain
\begin{equation}\label{363}
 \varepsilon b\bigg( \int \nabla W_{\varepsilon,Y} \nabla \varphi \bigg)^2 \le \varepsilon b\int|\nabla W_{\varepsilon,Y}|^2\int|\nabla \varphi|^2
 \le C\|\varphi\|_\varepsilon^2.
\end{equation}
Finally, as\ $Q(x)$ is bounded, we have
\begin{equation}\label{364}
 \int Q(x)W_{\varepsilon,Y}^{q-2}\varphi^2 \le C\bigg(\int W_{\varepsilon,Y}^{q}\bigg)^{\frac{q-2}{q}}\bigg(\int|\varphi|^{q}\bigg)^{\frac{2}{q}}
   \le C\|\varphi\|_\varepsilon^2.
\end{equation}
Combining\ (\ref{361})--(\ref{364}) yields\ (\ref{360}).
\end{proof}

\begin{lem}\label{lm3.7}
 There exist constants\ $\varepsilon_0,\delta_0>0$, such that for any\ $\varepsilon\in(0,\varepsilon_0]$ and $\delta\in(0,\delta_0]$, $(Y,\varphi)$ is a critical point of\ $J_\varepsilon$ on\ $D^k_{\varepsilon,\delta}\times E^k_{\varepsilon,Y}$ is equivalent to
 \begin{equation*}
   u=\sum_{i=1}^k w_{\varepsilon,y_i}+\varphi
 \end{equation*}
 is a critical point of\ $I_\varepsilon$.
\end{lem}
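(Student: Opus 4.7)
The equivalence has an easy direction and a harder one. If $u = W_{\varepsilon,Y} + \varphi$ is a critical point of $I_\varepsilon$, then $I_\varepsilon'(u) = 0$; applying the chain rule to $J_\varepsilon(Y, \varphi) = I_\varepsilon(W_{\varepsilon,Y} + \varphi)$ immediately shows that all partial derivatives of $J_\varepsilon$ vanish, so $(Y, \varphi)$ is a constrained critical point on $D^k_{\varepsilon,\delta} \times E^k_{\varepsilon,Y}$. The work is in the converse: passing from constrained criticality to $I_\varepsilon'(u)=0$.

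To set this up, the criticality condition in the $\varphi$-direction says $\langle I_\varepsilon'(u), \psi\rangle = 0$ for every $\psi \in E^k_{\varepsilon,Y}$. By Lemma \ref{lm3.5}, $\varphi$ must coincide with the unique solution $\varphi_{\varepsilon,Y}$ of \eqref{400} in $N_\varepsilon$. Representing $I_\varepsilon'(u)$ via Riesz with respect to $\langle \cdot, \cdot\rangle_\varepsilon$, I can write
\begin{equation*}
\nabla_\varepsilon I_\varepsilon(u) = \sum_{i=1}^k \sum_{j=1}^3 c_{ij}\, \frac{\partial w_{\varepsilon,y_i}}{\partial y_{ij}}
\end{equation*}
for scalars $c_{ij}$. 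The claim $I_\varepsilon'(u)=0$ reduces to showing that the $3k$ coefficients $c_{ij}$ all vanish.

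The remaining criticality condition is $\partial_{y_{ij}} K(Y) = 0$ for $K(Y) := J_\varepsilon(Y, \varphi_{\varepsilon,Y})$. Applying the chain rule and substituting the expansion above yields
\begin{equation*}
0 = \partial_{y_{ij}} K(Y) = \sum_{i',j'} c_{i'j'} \Big\langle \frac{\partial w_{\varepsilon,y_{i'}}}{\partial y_{i'j'}},\; \frac{\partial w_{\varepsilon,y_i}}{\partial y_{ij}} + \frac{\partial \varphi_{\varepsilon,Y}}{\partial y_{ij}} \Big\rangle_\varepsilon,
\end{equation*}
a homogeneous $3k \times 3k$ linear system for $(c_{ij})$. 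To bound the $\partial_{y_{ij}}\varphi_{\varepsilon,Y}$ correction, I would differentiate the defining orthogonality constraint $\langle \varphi_{\varepsilon,Y}, \partial w_{\varepsilon,y_{i'}}/\partial y_{i'j'}\rangle_\varepsilon = 0$ with respect to $y_{ij}$, which reexpresses the inner product in terms of $\langle \varphi_{\varepsilon,Y}, \partial^2 w_{\varepsilon,y_{i'}}/(\partial y_{ij}\,\partial y_{i'j'})\rangle_\varepsilon$; this is negligible compared with the leading block below by the smallness of $\|\varphi_{\varepsilon,Y}\|_\varepsilon$ from \eqref{401}.

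It remains to check that the leading coefficient matrix $\bigl[\langle \partial w_{\varepsilon,y_{i'}}/\partial y_{i'j'}, \partial w_{\varepsilon,y_i}/\partial y_{ij}\rangle_\varepsilon\bigr]$ is invertible. After rescaling $x \mapsto (x - y_i)/\varepsilon$, the cross-peak blocks $(i \ne i')$ are exponentially small by \eqref{217} since $|y_i - y_{i'}|/\varepsilon \ge R_1/\eta$, while each within-peak block $(i = i')$ reduces, by radial symmetry of $w$, to a positive multiple of the $3\times 3$ identity. Hence the leading matrix is $O(\varepsilon)$ on the block diagonal and $o(\varepsilon)$ off, so after absorbing the $\partial_{y_{ij}}\varphi_{\varepsilon,Y}$ perturbation the system forces $c_{ij}=0$, proving $I_\varepsilon'(u)=0$. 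The main technical hurdle is to balance these three orders of magnitude, the leading within-peak block, the exponentially small cross-peak entries, and the $\varphi$-derivative correction, uniformly for $Y \in D^k_{\varepsilon,\delta}$ as $\varepsilon, \delta \to 0$.
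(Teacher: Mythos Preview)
The paper does not actually prove this lemma: it writes ``This lemma can be proved by the same arguments as that of \cite{3,6} with minor modifications. We omit the details.'' Your sketch is exactly the standard Lyapunov--Schmidt argument used in those references (Cao--Noussair--Yan and Bartsch--Peng): represent $\nabla_\varepsilon I_\varepsilon(u)$ as a combination of the $\partial w_{\varepsilon,y_i}/\partial y_{ij}$ using criticality in the $\varphi$-direction, then use criticality in $Y$ to obtain a homogeneous linear system whose coefficient matrix is a small perturbation of a nonsingular block-diagonal matrix, forcing all Lagrange multipliers to vanish. So your approach matches what the paper implicitly invokes.

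One minor point worth tightening: you appeal to Lemma~\ref{lm3.5} to conclude $\varphi=\varphi_{\varepsilon,Y}$, but the uniqueness there is only asserted within the small set $N_\varepsilon$, and the lemma as stated does not assume $\varphi\in N_\varepsilon$ a priori. This does not actually break the argument---your direct matrix computation goes through as long as $\|\varphi\|_\varepsilon=o(\varepsilon^{3/2})$, which is exactly what is needed for the $\langle\varphi,\partial^2 w_{\varepsilon,y_i}/(\partial y_{ij}\partial y_{ij'})\rangle_\varepsilon$ correction (of size $O(\varepsilon^{-1/2}\|\varphi\|_\varepsilon)$) to be dominated by the $O(\varepsilon)$ diagonal blocks---and in the paper the lemma is only ever applied through $K(Y)$, where $\varphi=\varphi_{\varepsilon,Y}\in N_\varepsilon$ automatically. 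If you want the equivalence to hold for arbitrary constrained critical points, you should either add a smallness hypothesis on $\varphi$ (as the cited references effectively do) or skip the identification with $\varphi_{\varepsilon,Y}$ and work directly with the tangent-space computation, which you already set up correctly.
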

\begin{proof}
This lemma can be proved by the same arguments as that of\ \cite{3,6} with minor modifications. We omit the details.
\end{proof}

\begin{lem}\label{lm4.1}
For every\ $Y\in \partial \Omega_{\varepsilon^\alpha}$, we have either\ $K(Y)<c_{\varepsilon,1}$ or\ $\frac {\partial K(Y)} {\partial n}>0$, where\ $n$  is the outward unit normal of\ $\partial \Omega_{\varepsilon^{\alpha}}$ at\ $Y$.
\end{lem}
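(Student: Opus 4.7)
My plan is to reduce $K(Y)$ and its normal derivative to functionals depending only on $W_{\varepsilon,Y}$, expand these using $(Q_3)$ and the interaction decay (\ref{216})--(\ref{217}), and then analyze $\partial\Omega_{\varepsilon^\alpha}$ face by face.

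\textbf{Reduction and expansion.} Starting from (\ref{120}), use the bound on $\|\varphi_{\varepsilon,Y}\|_\varepsilon$ from Lemma \ref{lm3.5} together with Lemmas \ref{lm3.1}, \ref{lm3.2}, \ref{lm3.6} to conclude $K(Y) = I_\varepsilon(W_{\varepsilon,Y}) + o(\varepsilon^{3+\alpha h_1})$, and similarly for $\nabla_Y K(Y)$ (the $\varphi$-dependence drops out to leading order since $\varphi_{\varepsilon,Y}$ is critical in the $\varphi$-direction). Then expand $I_\varepsilon(W_{\varepsilon,Y})$ using the equation for $w$ from Lemma \ref{lm2.2}, the Nehari-type identity obtained by testing against $w$, Taylor expansion of $Q$ around each $y_i$ (with radial symmetry of $w$ killing odd-order moments), and (\ref{216})--(\ref{217}) for cross terms, arriving at
\begin{equation*}
I_\varepsilon(W_{\varepsilon,Y}) = \varepsilon^3(kA - k^2B) - \frac{\varepsilon^3 \|w\|_{L^q}^q}{q}\sum_{i=1}^k\big(\lambda|y_i'|^{h_1} - P_2(y_i'')\big) + \mathcal{I}_\varepsilon(Y) + o(\varepsilon^{3+\alpha h_1}),
\end{equation*}
where $\mathcal{I}_\varepsilon(Y) \sim c_0\,\varepsilon^3 \sum_{i\ne j} e^{-\eta|y_i-y_j|/\varepsilon}$ with $c_0>0$ is the peak-interaction energy.

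\textbf{Case analysis on $\partial\Omega_{\varepsilon^\alpha}$.} There are three types of boundary faces, according to which defining constraint becomes active. \emph{Case (I): some $|y_{i_0}'|=\delta$.} The fixed negative contribution $-\frac{\varepsilon^3\lambda\delta^{h_1}}{q}\|w\|_{L^q}^q$ beats $-T\varepsilon^{3+\alpha h_1}$ for $\varepsilon$ small (since $\delta^{h_1}\gg \varepsilon^{\alpha h_1}$), so $K(Y)<c_{\varepsilon,1}$. \emph{Case (II): Case (I) fails and some $|y_{i_0}''|=\varepsilon^\alpha$.} The outward unit normal lies in the $y_{i_0}''$-coordinate block, along $y_{i_0}''/|y_{i_0}''|$. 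Differentiating the expansion and using (\ref{114}) gives
\begin{equation*}
\Big\langle \nabla_{y_{i_0}''} K(Y), \frac{y_{i_0}''}{|y_{i_0}''|}\Big\rangle \ge \frac{\varepsilon^3\lambda\|w\|_{L^q}^q}{q}|y_{i_0}''|^{h-1}(1+o(1)) = \frac{\varepsilon^3\lambda\|w\|_{L^q}^q}{q}\varepsilon^{\alpha(h-1)}(1+o(1)) > 0.
\end{equation*}
\emph{Case (III): Cases (I),(II) fail and some $\eta|y_{i_0}-y_{j_0}|/\varepsilon = R_1$.} The outward normal corresponds to decreasing $|y_{i_0}-y_{j_0}|$, and since $\mathcal{I}_\varepsilon$ is a strictly decreasing function of this distance to leading order, $\partial K/\partial n > 0$ provided $R_1$ is chosen large enough that this contribution dominates the residual $Q$-based terms.

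\textbf{Main obstacle.} Case (III) is the delicate step: for the Kirchhoff energy, the peak-peak interaction receives contributions from both the $\int Q W_{\varepsilon,Y}^q$ term and the nonlocal $\frac{\varepsilon b}{4}(\int|\nabla W_{\varepsilon,Y}|^2)^2$ term, each of the same asymptotic order $\varepsilon^3 e^{-\eta|y_i-y_j|/\varepsilon}$; one must verify carefully that their sum has the repulsive (positive) sign at leading order, which is the essential new technical point beyond the Schr\"odinger case \cite{24} and is carried out using (\ref{214})--(\ref{217}) together with the identity satisfied by $w$. A secondary subtlety appears in Case (II): one needs $\delta$ small enough that the residue $R(y_{i_0})=O(|y_{i_0}|^{h_1+\sigma})$ does not cancel the positive $P_2$-contribution to the normal derivative, which is possible since $h_1+\sigma>h$.
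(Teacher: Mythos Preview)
Your handling of Case~(III) has the sign of the interaction backwards, and this is a genuine gap. From Lemma~\ref{lemma A.1} the dominant cross contribution to $K(Y)$ is $-\int\sum_{i=1}^{k-1}w_{\varepsilon,y_i}\bigl(\sum_{j>i}w_{\varepsilon,y_j}\bigr)^{q-1}<0$; together with the remaining cross terms this gives (\ref{46}), namely $K(Y)\le \varepsilon^3(kA-k^2B)-C\varepsilon^3\sum_{i<j}e^{-\eta|y_i-y_j|/\varepsilon}+\ldots$ with $C>0$. The interaction is therefore \emph{attractive}, exactly as in the Schr\"odinger case \cite{24}; the Kirchhoff cross terms coming from $\frac{\varepsilon b}{4}\bigl(\int|\nabla W_{\varepsilon,Y}|^2\bigr)^2$ are of the same or smaller order and do not flip the sign (they are simply absorbed into the $O\bigl(\varepsilon^3\sum e^{-\eta|y_i-y_j|/\varepsilon}\bigr)$ error in Lemma~\ref{lemma A.1}). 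Since the outward normal on the face $\eta|y_{i_0}-y_{j_0}|/\varepsilon=R_1$ points toward smaller separation, moving outward \emph{decreases} $K$: you would get $\partial K/\partial n<0$, not $>0$. The paper does not attempt a normal-derivative argument on this face at all; instead (Step~1) it chooses $R_1=R_1(\varepsilon)$ of order $\alpha h_1\ln\varepsilon^{-1}$, so that the attractive interaction at this separation is already large enough in magnitude to force $K(Y)<c_{\varepsilon,1}$ directly (see (\ref{47})). Your ``main obstacle'' paragraph is thus based on a false premise.

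Your Case~(II) is also incomplete. The blanket error $o(\varepsilon^{3+\alpha h_1})$ you claim for the reduction does not hold uniformly on $\partial\Omega_{\varepsilon^\alpha}$: by (\ref{401}) the error $\|\varphi_{\varepsilon,Y}\|_\varepsilon^2$ contains terms $|Q(y_i)-1|^{2(1-\tau)}$ and $e^{-2(\bar\theta-\tau)\bar\eta/\varepsilon}$, and on the face $|y_{i_0}''|=\varepsilon^\alpha$ nothing prevents some $|y_i'|$ from being of order $\delta$ or some pair distance from being near $R_1\varepsilon/\eta$. The paper resolves this by a further split (Step~2, cases~(i)--(ii)): if $e^{-\bar\eta/\varepsilon}>L\varepsilon^{\alpha h}$ or some $|y_i'|>L\varepsilon^{\alpha h/h_1}$ with $L$ large, one again obtains $K(Y)<c_{\varepsilon,1}$ from (\ref{411}); only in the complementary regime, where all such quantities are controlled by $\varepsilon^{\alpha h}$, does the derivative expansion of Lemma~\ref{lemma A.2} yield (\ref{417}) and hence $\partial K/\partial n>0$. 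Without this split the error terms in (\ref{410}) can swamp the main term $C\varepsilon^{3+\alpha(h-1)}$.
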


\noindent
\begin{proof}
We divide the proof of this lemma into two steps.

Step 1: Suppose that\ $\frac{{\eta |{y_i} - {y_j}|}}{\varepsilon }={R_1}$ for some\ $i\ne j$, or\ ${y_i} \in \partial B_\delta ^t(0) \times B_{\varepsilon^\alpha} ^{3 - t}(0)$ for some\ $i\in \{1,\ldots,k\}$. We claim that\ $Y\in K^{c_{\varepsilon,1}}$.

In fact, since\ $\varphi_{\varepsilon,Y}\in N_\varepsilon $, by Lemma\ \ref{lm3.1}, we obtain
\begin{equation}\label{41}
  |l_{\varepsilon,Y}(\varphi_{\varepsilon,Y})|=O(\|\varphi_{\varepsilon,Y}\|_\varepsilon^2).
\end{equation}
And by Lemma\ \ref{lm3.2}, we have
\begin{equation}\label{42}
  |R_{\varepsilon,Y}(\varphi_{\varepsilon,Y})|=o_\varepsilon(1)\|\varphi_{\varepsilon,Y}\|_\varepsilon^2.
\end{equation}
Combining\ (\ref{120}), (\ref{360}), (\ref{41}) and (\ref{42}) yields
\begin{equation}\label{43}
  J_\varepsilon(Y,{\varphi_{\varepsilon,Y}})=J_\varepsilon(Y,0)+O(\|\varphi_{\varepsilon,Y}\|_\varepsilon^2).
\end{equation}
Then combining \ref{lemma A.1},\ (\ref{43}) and\ (\ref{401}) yields
\begin{equation}\label{44}
\begin{aligned}
% \nonumber to remove numbering (before each equation)
   K(Y) =&\ \varepsilon^3\left(kA-k^2B\right)-\frac{2}{q-2}A\varepsilon^3 \sum_{i=1}^k \left(Q(y_i)-1\right)\\
   &\
   -\int \sum_{i=1}^{k-1}w_{\varepsilon,y_i}\Big(\sum_{j=i+1}^kw_{\varepsilon,y_j}\Big)^{q-1}+O\big(\varepsilon^{4+[h]}\big)\\   &\ +O\Big(\sum_{i=1}^k\sum_{m=1}^{[h]}\varepsilon^{3+m}|D^mQ(y_i)|+\varepsilon^3 \sum_{i\not=j}e^{-\frac{\eta|y_i-y_j|}{\varepsilon}}\Big) \\
  &\ +\varepsilon^{3}O\Big(\sum_{i=1}^k|Q(y_i)-1|^{2(1-\tau)}+\sum_{i=1}^k\sum_{m=1}^{[h]}\varepsilon^{2(m-\tau)}|D^mQ(y_i)|^{2(1-\tau)}\Big)\\   &\
+\varepsilon^{3}O\Big(\varepsilon^{2([h]+1-\tau)}+\sum_{i\not=j}e^{-2(\bar{\theta}-\tau)\frac{\eta|y_i-y_j|}{\varepsilon}}\Big).
   \end{aligned}
   \end{equation}
Choose\ $\tau$ sufficiently small such that
\begin{equation*}
  2([h]+1-\tau )>{[h]+1},\ 2(m-\tau)>m,\ 2(\bar{\theta}-\tau)>1.
\end{equation*}
Then by\ (\ref{44}), we have
\begin{equation}\label{45}
\begin{aligned}
% \nonumber to remove numbering (before each equation)
   K(Y) =&\ \varepsilon^3\left(kA-k^2B\right)-\frac{2}{q-2}A\varepsilon^3 \sum_{i=1}^k \left(Q(y_i)-1\right)\\   &\
   -\int \sum_{i=1}^{k-1}w_{\varepsilon,y_i}\Big(\sum_{j=i+1}^kw_{\varepsilon,y_j}\Big)^{q-1}+O\big(\varepsilon^{4+[h]}\big)\\   &\ +O\Big(\sum_{i=1}^k\sum_{m=1}^{[h]}\varepsilon^{3+m}|D^mQ(y_i)|+\varepsilon^3 \sum_{i\not=j}e^{-\frac{\eta|y_i-y_j|}{\varepsilon}}\Big).
   \end{aligned}
   \end{equation}
Combining the above equality and the condition $(Q_3)$ yields
\begin{eqnarray}\label{46}
% \nonumber to remove numbering (before each equation)
   K(Y) \le \varepsilon^3\Big(kA-k^2B-C\sum_{i=1}^k P_1(y'_i)-C\sum_{i<j}e^{-\frac{\eta|y_i-y_j|}{\varepsilon}}\Big)+O(\varepsilon^{4}).
   \end{eqnarray}
If\ $\frac{{\eta |{y_i} - {y_j}|}}{\varepsilon }={R_1}$ for some $i\ne j$, taking\ $R_1=\frac {\alpha h_1 ln{\frac {1}{\varepsilon}}-lnT} {2}$, by\ (\ref{46}) we obtain
\begin{equation}\label{47}
% \nonumber to remove numbering (before each equation)
   K(Y) \le \varepsilon^3\left(kA-k^2B-T\varepsilon^{\alpha h_1}\right)-C\varepsilon^3\sum_{i=1}^k P_1(y'_i)+O(\varepsilon^{4})
   <c_{\varepsilon,1}.
   \end{equation}
If\ ${y_i} \in \partial B_\delta ^t(0) \times B_{\varepsilon^\alpha} ^{3 - t}(0)$ for some\ $i\in \{1,\ldots,k\}$, combining\ (\ref{46}) and\ (\ref{112}) yields
\begin{equation}\label{48}
\begin{aligned}
% \nonumber to remove numbering (before each equation)
   K(Y) &\le \varepsilon^3\Big(kA-k^2B-C\lambda \sum_{i=1}^k|y'_i|^{h_1}\Big)-C\varepsilon^3\sum_{i<j}e^{-\frac{\eta|y_i-y_j|}{\varepsilon}}+O(\varepsilon^{4})\nonumber \\&\le  \varepsilon^3\left(kA-k^2B-C\lambda \varepsilon^{\alpha h_1}\right)-C\varepsilon^3\sum_{i<j}e^{-\frac{\eta|y_i-y_j|}{\varepsilon}}+O(\varepsilon^{4}).
   \end{aligned}
   \end{equation}
Let\ $T$ sufficiently small such that\ $T<C\lambda $, then we have\ $K(Y)<c_{\varepsilon,1}$.

Step 2: Suppose\ $t\in\{1,2\}$ and\ ${y_j} \in  B_\delta ^t(0) \times \partial B_{\varepsilon^\alpha} ^{3 - t}(0)$ for some\ $j\in \{1,\ldots,k\}$. Without loss of generality, we assume\ $j=1$. We claim that either\ $K(Y)<c_{\varepsilon,1}$ or\ $\frac {\partial K(Y)} {\partial n}>0$, where\ $n$ is  the outward unit normal of\ $B_\delta ^t(0) \times \partial B_{\varepsilon^\alpha} ^{3 - t}(0)$ at\ $y_1$.

In fact, for any\ ${y_i} \in  B_\delta ^t(0) \times B_{\varepsilon^\alpha} ^{3 - t}(0)$, and $m\ge 1$, we have
\begin{equation}\label{49}
\begin{aligned}
% \nonumber to remove numbering (before each equation)
  \varepsilon^m|D^mQ(y_i)|&= O\left( \varepsilon^m|y'_i|^{h_1-m}+\varepsilon^m|y''_i|^{h-m}\right) \\
  &= O\left( \varepsilon^m|y'_i|^{h_1-m}+\varepsilon^{\alpha h+m(1-\alpha)}\right).
   \end{aligned}
   \end{equation}
By Lemma \ref{lemma A.2}, we obtain
\begin{equation}\label{410}
\begin{aligned}
% \nonumber to remove numbering (before each equation)
  \frac {\partial K} {\partial y_{1l}}=&\  -C\varepsilon^3D_lQ(y_1)-(q-1)\sum_{i=2}^k\int w_{\varepsilon,y_1}^{q-2} w_{\varepsilon,y_i}\frac {\partial w_{\varepsilon,y_1}} {\partial y_{1l}} \\
  &\ +O\Big(\sum_{i=1}^k \sum_{m=2}^{[h]}\varepsilon^{2+m-\tau}|D^mQ(y_i)|^{1-\tau}+\varepsilon^{3+[h]-\tau}\Big) \\
  &\ +O\Big(\varepsilon^2\sum_{i=1}^k|Q(y_i)-1|^{1-\tau}\Big)+O\Big(\varepsilon^2\sum_{i\not =j}e^{-(\bar\theta-\tau)\frac{\eta|y_i-y_j|}{\varepsilon}}\Big).
   \end{aligned}
   \end{equation}
Denote\ $\bar\eta=\min_{i\not= j}\eta|y_i-y_j|$. We divide it into two cases.

\ $(i)$ Suppose that\ $e^{-\frac {\bar\eta} {\varepsilon}}>L\varepsilon^{\alpha h}$ or\ $|y'_i|>L\varepsilon^{{\alpha h}/h_1}$ for some\ $i\in \{1,\ldots,k\}$, where\ $L>T$ is a large constant. We claim that\ $K(Y)<c_{\varepsilon,1}$.

In fact, combining\ (\ref{45}) and\ (\ref{112}) yields
\begin{equation}\label{411}
\begin{aligned}
% \nonumber to remove numbering (before each equation)
   K(Y) &\le \varepsilon^3\left(kA-k^2B\right)-C_1 \varepsilon^3 \sum_{i=1}^k |y'_i|^{h_1}-C_1 \varepsilon^3 e^{-\frac {\bar\eta} {\varepsilon}}+O\Big(\sum_{i=1}^k \sum_{m=1}^{[h]}\varepsilon^{3+m}|y'_i|^{h_1-m}+\varepsilon^{3+\alpha h}\Big)
   \\ &\le \varepsilon^3\left(kA-k^2B\right)-(C_1-\tau' )\varepsilon^3\sum_{i=1}^k |y'_i|^{h_1}-C_1\varepsilon^3 e^{-\frac {\bar\eta} {\varepsilon}}+C_{\tau'} \varepsilon^{3+\alpha h},
   \end{aligned}
   \end{equation}
where\ $\tau'>0$ is a constant. When\ $L>T$ is large enough, we have\ $K(Y)<c_{\varepsilon,1}$.

\ $(ii)$ Suppose that\ $e^{-\frac {\bar\eta} {\varepsilon}}\le L\varepsilon^{\alpha h}$ and\ $|y'_i|\le L\varepsilon^{{\alpha h}/h_1}$,\ $i=1,\ldots,k$. We claim that\ $\frac {\partial K(Y)} {\partial n} >0$.\\
First, we can see
\begin{equation}\label{412}
|1-Q(y_i)|=O(\varepsilon^{\alpha h}),
\end{equation}
\begin{equation}\label{413}
|D^mQ(y_i)|\varepsilon^m=O(\varepsilon^{\alpha h(h_1-m)/h_1}\varepsilon^m+\varepsilon^{\alpha h+m(1-\alpha)})=O(\varepsilon^{\alpha h+m(1-\alpha)}),
\end{equation}
\begin{equation}\label{414}
\varepsilon^2 e^{-\bar\theta \frac {\bar\eta} {\varepsilon}}=O(\varepsilon^{2+\bar\theta \alpha h}).
\end{equation}
Since for any\ $i\not =1$,
\begin{equation}\label{415}
\int w_{\varepsilon,y_1}^{q-2} w_{\varepsilon,y_i}\frac {\partial w_{\varepsilon,y_1}} {\partial y_{1l}}=\big(C+o(1)\big)\varepsilon^2 w\Big(\frac {|y_i-y_1|}{\varepsilon}\Big) \frac{y_{il}-y_{1l}}{|y_i-y_1|} ,
\end{equation}
\begin{equation}\label{416}
\bigg\langle {\frac{y_{i}-y_{1}}{|y_i-y_1|}},n \bigg\rangle \le 0,\forall \ y_i\in B_\delta ^t(0) \times B_{\varepsilon^\alpha} ^{3 - t}(0),
\end{equation}
where
\begin{equation*}
n=\left\{
\begin{aligned}
&\bigg(0,\frac{y_{1,2}}{(y_{1,2}^2+y_{1,3}^2)^{\frac {1}{2}}},\frac{y_{1,3}}{(y_{1,2}^2+y_{1,3}^2)^{\frac {1}{2}}}\bigg), & t=1, \\
&\Big(0,0,\frac {y_{1,3}}{|y_{1,3}|}\Big), & t=2,
\end{aligned}
\right.
\end{equation*}
combining\ (\ref{114}), (\ref{410}) and (\ref{412})--(\ref{416}) yields
\begin{equation}\label{417}
\begin{aligned}
% \nonumber to remove numbering (before each equation)
  \frac {\partial {K(Y)}} {\partial {n}} &\ge C\varepsilon^3\langle {-DQ(y_1)},{n} \rangle+O(\varepsilon^{3+\alpha h+\tau''})\\
  &\ge C\varepsilon^3|y''_1|^{h-1}+O(\varepsilon^{3+\alpha h+\tau''})  \\
   &>0,
   \end{aligned}
   \end{equation}
where\ $\tau''>0$ is a constant.

Combining Steps 1 and 2 we complete the proof of this lemma.
\end{proof}

We are now ready to prove Theorem\ \ref{thm1.1}.

\begin{proof}[\textbf{Proof of Theorem\ \ref{thm1.1}:}]
 Combining Lemma\ \ref{lm2.6} and\ \ref{lm4.1}, we just need to prove\ $K^{c_{\varepsilon,2}}$ cannot be deformed into\ $K^{c_{\varepsilon,1}}$. Then\ $K$ has at least one critical point in \ $K^{c_{\varepsilon,2}}\backslash K^{c_{\varepsilon,1}}$. Finally, by Lemma\ \ref{lm3.7}, we obtain that\ $u=\sum_{i=1}^k w_{\varepsilon,y_{\varepsilon,i}}+\varphi_{\varepsilon,Y}$ is a critical point of\ $I_{\varepsilon}$, in the sense that it is a solution of equation\ (\ref{11}).

Next, we prove\ $K^{c_{\varepsilon,2}}$ cannot be deformed into\ $K^{c_{\varepsilon,1}}$. It's easy to know
\begin{equation*}
K^{c_{\varepsilon,2}}=\Omega_{\varepsilon^\alpha}.
\end{equation*}
Denote
\begin{equation}\label{421}
M=B_\delta ^t(0) \times B_{\varepsilon^\alpha} ^{3 - t}(0),
\end{equation}
\begin{equation}\label{422}
\Gamma_\iota=\big\{(y',y'')\in M,\ |y'|\ge \iota \big\},
\end{equation}
\begin{equation}\label{423}
T_\gamma=\cup_{i\not =j}\big\{\eta|y_i-y_j|\le \gamma,\ y_i,\ y_j\in M\big\},
\end{equation}
\begin{equation}\label{424}
L_{\iota,\gamma}=(\underbrace {\Gamma_\iota \times M \times \cdots \times M}_k) \cup (\underbrace {M \times \Gamma_\iota \times \cdots \times M}_k) \cup \cdots \cup (\underbrace {M \times \cdots \times M \times \Gamma_\iota}_k ) \cup T_\gamma.
\end{equation}

Step 1: We claim that there exist constants\ $C$,\ $c'$ with\ $C>c'>0$, such that
\begin{equation}\label{425}
{{L_{{\delta/2},{c'\varepsilon ln\varepsilon^{-1}}}\setminus T_{\varepsilon R_1}}} \subset K^{c_{\varepsilon,1}} \subset {{{L_{{c'\varepsilon^{\alpha h/h_1}},{C\varepsilon ln\varepsilon^{-1}}}}  \backslash T_{\varepsilon R_1}}}.
\end{equation}
In fact, for any\ $Y\in K^{c_{\varepsilon,1}}$, we have $K(Y)<c_{\varepsilon,1}$. Then by\ (\ref{45}), we obtain
\begin{equation}\label{426}
\begin{aligned}
% \nonumber to remove numbering (before each equation)
    c_{\varepsilon,1}=&\ \varepsilon^3(kA-k^2B-T\varepsilon^{\alpha h_1}) \\
    >&\ K(Y)  \\
     \ge&\ \varepsilon^3\left(kA-k^2B\right)-c'\varepsilon^3\sum_{i=1}^k |y'_i|^{h_1}-c'\varepsilon^3\sum_{i\not =j}e^{-\frac{\eta|y_i-y_j|}{\varepsilon}}+O\left(\varepsilon^{4+\alpha h}\right).
   \end{aligned}
   \end{equation}
Thus,\ $|y'_i|\ge c'\varepsilon^{\alpha h/h_1}$ or\ $\eta|y_i-y_j|\le C\varepsilon ln\varepsilon^{-1}$ for some\ $i\not =j$. Hence,
\begin{equation*}
K^{c_{\varepsilon,1}} \subset {{{L_{{c'\varepsilon^{\alpha h/h_1}},{C\varepsilon ln\varepsilon^{-1}}}}  \backslash T_{\varepsilon R_1}}}.
\end{equation*}
On the other hand, choose\ $c'>0$ sufficiently small. When\ $|y'_i|\ge \frac{\delta}{2}$ or\ $\eta|y_i-y_j|\le c'\varepsilon ln\varepsilon^{-1}$ for some\ $i\not =j$, by\ (\ref{46}), we have\ $K(Y)<c_{\varepsilon,1}$. Then
\begin{equation*}
{{L_{{\delta/2},{c'\varepsilon ln\varepsilon^{-1}}}\setminus T_{\varepsilon R_1}}} \subset K^{c_{\varepsilon,1}}.
\end{equation*}
So the claim follows.

Step 2: Since\ $  {{{L_{{c'\varepsilon^{\alpha h/h_1}},{C\varepsilon ln\varepsilon^{-1}}}}  \backslash T_{\varepsilon R_1}}}$ can be deformed into\ ${{L_{{\delta/2},{c'\varepsilon ln\varepsilon^{-1}}}\setminus T_{\varepsilon R_1}}}$, then\ $K^{c_{\varepsilon,1}}$ can be deformed into\ ${{L_{{\delta/2},{c'\varepsilon ln\varepsilon^{-1}}}\setminus T_{\varepsilon R_1}}}$. Suppose\ $K^{c_{\varepsilon,2}}$ can be deformed into\ $K^{c_{\varepsilon,1}}$, then we see that\ $\Omega_{\varepsilon^\alpha}=K^{c_{\varepsilon,2}}$ can be deformed into\ ${{L_{{\delta/2},{c'\varepsilon ln\varepsilon^{-1}}}\setminus T_{\varepsilon R_1}}}$. Hence, $\underbrace {M \times M\times \cdots \times M}_k$ can be deformed into\ $L_{{\delta/2},{c'\varepsilon ln\varepsilon^{-1}}}$. However,
\begin{equation*}
H^t(M,\Gamma_{\delta/2})=H^t(B_{\delta}^t(0),\partial B_{\delta}^t(0))\not =0,
\end{equation*}
By Lemma\ \ref{lm4.2}, we obtain
\begin{equation*}
H_*(\underbrace {M \times M\times \cdots \times M}_k,L_{{\delta/2},{c'\varepsilon ln\varepsilon^{-1}}})\not =0,
\end{equation*}
Then\ $\underbrace {M \times M\times \cdots \times M}_k$ cannot be deformed into\ $L_{{\delta/2},{c'\varepsilon ln\varepsilon^{-1}}}$. This is a contradiction.
\end{proof}

\section*{Appendix}

\appendix

\section{Energy estimates }

\renewcommand{\theequation}{A.\arabic{equation}}

\begin{lem}\label{lemma A.1}
For\ $\varepsilon$ sufficiently small and any\ $Y\in D^k_{\varepsilon,\delta}$, we have
\begin{equation}\label{325}
\begin{aligned}
% \nonumber to remove numbering (before each equation)
  J_{\varepsilon}(Y,0)=&\ \varepsilon^3\Big(kA-k^2B-\frac{2}{q-2}A \sum_{i=1}^k \left(Q(y_i)-1\right)\Big)\\   &\
  -\int \sum_{i=1}^{k-1}w_{\varepsilon,y_i}\Big(\sum_{j=i+1}^kw_{\varepsilon,y_j}\Big)^{q-1}+O\Big(\sum_{i=1}^k\sum_{m=1}^{[h]}\varepsilon^{3+m}|D^mQ(y_i)|\Big)\\   &\
  +O\Big(\varepsilon^3\sum_{i\not=j}e^{-\frac{\eta|y_i-y_j|}{\varepsilon}}+\varepsilon^{4+[h]}\Big),
\end{aligned}
\end{equation}
where\ $A=\frac{q-2}{2q}\|w\|_{L^q}^q,\ B=\frac{b}{4}\|\nabla w\|_{L^2}^4$.
\end{lem}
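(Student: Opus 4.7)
The plan is to split
\[J_\varepsilon(Y,0)=I_\varepsilon(W_{\varepsilon,Y})=\frac{1}{2}\|W_{\varepsilon,Y}\|_\varepsilon^2+\frac{\varepsilon b}{4}\Big(\int|\nabla W_{\varepsilon,Y}|^2\Big)^2-\frac{1}{q}\int Q(x)W_{\varepsilon,Y}^q\]
into its three natural pieces and expand each using the equation (\ref{25}) satisfied by $w$, the exponential decay estimates (\ref{214})--(\ref{217}), and a Taylor expansion of $Q$ at each $y_i$ furnished by $(Q_3)$. The target is to check that the leading contributions regroup into the right-hand side of (\ref{325}) while all remainders fit into the stated $O$-terms.

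For the quadratic part I would test (\ref{25}) against $w$ to get $a\|\nabla w\|_{L^2}^2+\|w\|_{L^2}^2=\|w\|_{L^q}^q-bk\|\nabla w\|_{L^2}^4$ (using $Q(0)=1$), so that after scaling $\sum_i\|w_{\varepsilon,y_i}\|_\varepsilon^2=k\varepsilon^3\|w\|_{L^q}^q-4k^2B\varepsilon^3$. For the cross pieces I would test the $w_{\varepsilon,y_i}$-equation against $w_{\varepsilon,y_j}$ with $i\ne j$, yielding
\[\langle w_{\varepsilon,y_i},w_{\varepsilon,y_j}\rangle_\varepsilon=\int w_{\varepsilon,y_i}^{q-1}w_{\varepsilon,y_j}-\varepsilon bk\|\nabla w\|_{L^2}^2\int\nabla w_{\varepsilon,y_i}\nabla w_{\varepsilon,y_j},\]
whose second summand is $O(\varepsilon^3 e^{-\eta|y_i-y_j|/\varepsilon})$ by (\ref{217}). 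The Kirchhoff term is treated analogously: $\int|\nabla W_{\varepsilon,Y}|^2=k\varepsilon\|\nabla w\|_{L^2}^2+O(\varepsilon\sum_{i\ne j}e^{-\eta|y_i-y_j|/\varepsilon})$, so $\tfrac{\varepsilon b}{4}(\int|\nabla W_{\varepsilon,Y}|^2)^2=k^2B\varepsilon^3+O(\varepsilon^3\sum_{i\ne j}e^{-\eta|y_i-y_j|/\varepsilon})$, and the cancellation $-2k^2B\varepsilon^3+k^2B\varepsilon^3=-k^2B\varepsilon^3$ between $\tfrac{1}{2}\sum_i\|w_{\varepsilon,y_i}\|_\varepsilon^2$ and the Kirchhoff piece produces the coefficient of $k^2B$ correctly.

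For the nonlinear piece I would split $\int Q(x)W_{\varepsilon,Y}^q=\sum_i\int Q(x)w_{\varepsilon,y_i}^q+\int Q(x)\bigl(W_{\varepsilon,Y}^q-\sum_i w_{\varepsilon,y_i}^q\bigr)$. In the local sum I would change variables $x=\varepsilon z+y_i$ and Taylor expand $Q$ at $y_i$ to order $[h]$: the zeroth-order piece is $\varepsilon^3\sum_iQ(y_i)\|w\|_{L^q}^q$, the intermediate derivatives produce $O(\sum_{i,m}\varepsilon^{3+m}|D^mQ(y_i)|)$, and the Taylor remainder combined with the exponential decay of $w$ is $O(\varepsilon^{4+[h]})$. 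Using $\tfrac{1}{2}-\tfrac{1}{q}=\tfrac{q-2}{2q}$ and $A=\tfrac{q-2}{2q}\|w\|_{L^q}^q$, this combines with the quadratic contribution to give $kA\varepsilon^3$ and $-\tfrac{2A\varepsilon^3}{q-2}\sum_i(Q(y_i)-1)$. For the genuinely nonlocal cross part I would use the telescoping identity
\[W_{\varepsilon,Y}^q-\sum_{i=1}^k w_{\varepsilon,y_i}^q=\sum_{i=1}^{k-1}\bigl[(w_{\varepsilon,y_i}+S_i)^q-w_{\varepsilon,y_i}^q-S_i^q\bigr],\qquad S_i:=\sum_{j>i}w_{\varepsilon,y_j},\]
and, for each bracket, the Taylor expansion $(a+b)^q-b^q=qab^{q-1}+\int_0^a q(q-1)(b+s)^{q-2}(a-s)\,ds$ with $a=w_{\varepsilon,y_i}$ and $b=S_i$. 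Dividing by $q$, the main $w_{\varepsilon,y_i}S_i^{q-1}$ term produces exactly $-\int\sum_{i=1}^{k-1}w_{\varepsilon,y_i}(\sum_{j>i}w_{\varepsilon,y_j})^{q-1}$ in $J_\varepsilon(Y,0)$; every other residual --- the Taylor remainder, the subtracted $-w_{\varepsilon,y_i}^q$, and the cross contribution $\sum_{i<j}\int w_{\varepsilon,y_i}^{q-1}w_{\varepsilon,y_j}$ inherited from the quadratic piece --- is a sum of integrals of the form $\int w_{\varepsilon,y_i}^{\alpha}w_{\varepsilon,y_j}^{\beta}\cdots$ with all exponents at least $1$, hence bounded by $C\varepsilon^3 e^{-\eta|y_i-y_j|/\varepsilon}$ via (\ref{216}) and Lemma~\ref{lm2.3}, and absorbed into the stated exponential error.

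The main obstacle is the bookkeeping for these nonlinear cross terms: one must verify that after the telescoping expansion and the Taylor step, only one cross integral is retained at the explicit level, and that every other contribution --- the Taylor remainder $\int Q(x)\int_0^{w_{\varepsilon,y_i}}q(q-1)(S_i+s)^{q-2}(w_{\varepsilon,y_i}-s)\,ds\,dx$, the error from replacing $Q(x)$ by $Q(0)=1$ inside the cross term (which by $(Q_3)$ introduces powers of $|x-y_i|$ tamed by exponential decay of $w$), and the $\sum_{i<j}\int w_{\varepsilon,y_i}^{q-1}w_{\varepsilon,y_j}$ surviving from the quadratic piece --- admits the common bound $C\varepsilon^3 e^{-\eta|y_i-y_j|/\varepsilon}$. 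Because $q\in(2,6)$, the minimum exponent in every pairwise cross integral is at least $1$, which is precisely what makes (\ref{216}) and Lemma~\ref{lm2.3} applicable and allows all such residues to be lumped into the single error term $O(\varepsilon^3\sum_{i\ne j}e^{-\eta|y_i-y_j|/\varepsilon})$ appearing in (\ref{325}).
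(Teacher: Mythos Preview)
Your proposal is correct and follows essentially the same route as the paper: use the equation for $w$ to reduce the quadratic and Kirchhoff pieces, Taylor expand $Q$ at each $y_i$ for the diagonal nonlinear terms, and extract the single explicit interaction $\int w_{\varepsilon,y_i}S_i^{q-1}$ from the cross part while pushing every other pairwise product into the $O(\varepsilon^3\sum_{i\ne j}e^{-\eta|y_i-y_j|/\varepsilon})$ error via (\ref{216})--(\ref{217}). The only cosmetic differences are that the paper first splits off $\int(Q(x)-1)W_{\varepsilon,Y}^q$ before handling the cross expansion (rather than carrying $Q(x)$ through and replacing it at the end as you do), and bounds $(a+b)^q-a^q-b^q-qa^{q-1}b-qab^{q-1}$ by direct elementary inequalities (cases $2<q\le 3$ and $q>3$) instead of your Taylor integral remainder; both routes yield residuals with every pairwise exponent $\ge 1$, so (\ref{216}) applies equally.
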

\begin{proof}
By the definition of\ $J_{\varepsilon}(Y,\varphi)$, we obtain
\begin{equation}\label{326}
\begin{aligned}
 J_{\varepsilon}(Y,0)=&\ I_{\varepsilon}(W_{\varepsilon,Y})\\
=&\ \frac 12 \|W_{\varepsilon,Y}\|_{\varepsilon}^2+\frac {\varepsilon b}{4} \bigg(\int|\nabla W_{\varepsilon,Y} |^2  \bigg)^2-\frac 1q \int Q(x)W_{\varepsilon,Y}^q \\
=&\
\frac 12 \sum_{i=1}^k\int\left(\varepsilon^2a|\nabla w_{\varepsilon,y_i}|^2+w_{\varepsilon,y_i}^2\right)+\frac 12 \sum_{i\not=j}\int\left(\varepsilon^2a|\nabla w_{\varepsilon,y_i}\nabla w_{\varepsilon,y_j}|+w_{\varepsilon,y_i}w_{\varepsilon,y_j}\right)\\
&\ +\frac {\varepsilon b}{4} \bigg(\sum_{i=1}^k\int|\nabla w_{\varepsilon,y_i} |^2  \bigg)^2+\frac {\varepsilon b}{4} \bigg(\sum_{i\not=j}\int|\nabla w_{\varepsilon,y_i} \nabla w_{\varepsilon,y_j} |  \bigg)^2-\frac 1q \int Q(x)W_{\varepsilon,Y}^q\\
=&\ \frac 12 \sum_{i=1}^k\int\left(\varepsilon^2a|\nabla w_{\varepsilon,y_i}|^2+w_{\varepsilon,y_i}^2\right)+\frac {\varepsilon b}{4} \bigg(\sum_{i=1}^k\int|\nabla w_{\varepsilon,y_i} |^2  \bigg)^2\\
&\ -\frac 1q \int Q(x)W_{\varepsilon,Y}^q+O\Big(\varepsilon^3\sum_{i\not=j}e^{-\frac{\eta|y_i-y_j|}{\varepsilon}}\Big).
\end{aligned}
\end{equation}
As\ $Q(0)=1$ and\ $w$ is the solution of\ (\ref{25}), we obtain that\ $w_{\varepsilon,y_i}(1\le i\le k)$ satisfies
\begin{equation*}
  -\left(\varepsilon^2a+\varepsilon bk\int|\nabla w_{\varepsilon,y_j}|^2\right)\Delta w_{\varepsilon,y_i}+w_{\varepsilon,y_i}=w_{\varepsilon,y_i}^{q-1},\ j=1,\ldots,k.
\end{equation*}
Multiplying\ $w_{\varepsilon,y_i}$ on both sides of the above equality and integrating, we have
\begin{equation*}
 \left(\varepsilon^2a+\varepsilon bk\int|\nabla w_{\varepsilon,y_j}|^2\right)\int|\nabla w_{\varepsilon,y_i}|^2+\int w_{\varepsilon,y_i}^2
=\int w_{\varepsilon,y_i}^{q}.
  \end{equation*}
Sum $i$ from $1$ to $k$, we obtain
\begin{equation*}
  \sum_{i=1}^k\left(\varepsilon^2a+\varepsilon bk\int|\nabla w_{\varepsilon,y_j}|^2\right)\int|\nabla w_{\varepsilon,y_i}|^2+\sum_{i=1}^k\int w_{\varepsilon,y_i}^2
=\int \sum_{i=1}^kw_{\varepsilon,y_i}^{q}.
\end{equation*}
Then
\begin{eqnarray*}
% \nonumber to remove numbering (before each equation)
  \sum_{i=1}^k\int\left(\varepsilon^2 a|\nabla w_{\varepsilon,y_i}|^2 +w_{\varepsilon,y_i}^2\right)= -\varepsilon bk\int|\nabla w_{\varepsilon,y_j}|^2\int\sum_{i=1}^k|\nabla w_{\varepsilon,y_i}|^2+\int \sum_{i=1}^kw_{\varepsilon,y_i}^{q}.
\end{eqnarray*}
Substituting it into\ (\ref{326}) yields
\begin{equation}\label{327}
\begin{aligned}
 J_{\varepsilon}(Y,0)=&\ \frac 12  \sum_{i=1}^k\int w_{\varepsilon,y_i}^q-\frac {\varepsilon b}{4} \bigg(\sum_{i=1}^k\int|\nabla w_{\varepsilon,y_i} |^2  \bigg)^2-\frac 1q \int Q(x)W_{\varepsilon,Y}^q+O\Big(\varepsilon^3\sum_{i\not=j}e^{-\frac{\eta|y_i-y_j|}{\varepsilon}}\Big) \\
=&\ \Big(\frac 12-\frac 1q \Big) \sum_{i=1}^k\int w_{\varepsilon,y_i}^q-\frac {\varepsilon b}{4} \bigg(\sum_{i=1}^k\int|\nabla w_{\varepsilon,y_i} |^2  \bigg)^2 \\
&\ -\frac 1q \int \bigg(Q(x)\Big(\sum_{i=1}^kw_{\varepsilon,y_i} \Big)^q-\sum_{i=1}^kw_{\varepsilon,y_i}^q \bigg)+O\Big(\varepsilon^3\sum_{i\not=j}e^{-\frac{\eta|y_i-y_j|}{\varepsilon}}\Big)\\
=&\ \Big(\frac 12-\frac 1q \Big) \varepsilon^3k \|w\|_{L^q}^q-\frac 14 \varepsilon^3k^2b\|\nabla w\|_{L^2}^4
-\frac 1q  \int \bigg(\Big(\sum_{i=1}^kw_{\varepsilon,y_i}\Big)^q-\sum_{i=1}^kw_{\varepsilon,y_i}^q \bigg) \\
&\ -\frac 1q \int \left(Q(x)-1\right)\Big(\sum_{i=1}^kw_{\varepsilon,y_i}\Big)^q+O\Big(\varepsilon^3\sum_{i\not=j}e^{-\frac{\eta|y_i-y_j|}{\varepsilon}}\Big).
\end{aligned}
\end{equation}
Next, we estimate the third and fourth term of the right side of \eqref{327} respectively. First, to estimate the third term, we use the following inequalities
\begin{equation*}
\begin{aligned}
% \nonumber to remove numbering (before each equation)
  \left||a+b|^q-a^q-b^q-qa^{q-1}b-qab^{q-1} \right| &\le \left\{
                                                 \begin{array}{ll}
                                                 C|b|^{q-2}|a|^2, & \hbox{$|b|\le|a|$;} \\
                                                 C|a|^{q-2}|b|^2, & \hbox{$|a|\le|b|$,}
                                                 \end{array}
                                               \right.
  \\
  &\le C|a|^{\frac q2}|b|^{\frac q2},\quad(2<q\le3),\\
   \left||a+b|^q-a^q-b^q-qa^{q-1}b-qab^{q-1} \right| &\le C(a^{q-2}b^2+a^2b^{q-2}),\quad(q>3).
\end{aligned}
\end{equation*}
Thus,
\begin{equation}\label{329}
\begin{aligned}
% \nonumber to remove numbering (before each equation)
 &\ \Big(\sum_{i=1}^k w_{\varepsilon,y_i}\Big)^q - w_{\varepsilon,y_1}^q-\Big(\sum_{i=2}^k w_{\varepsilon,y_i}\Big)^q - q w_{\varepsilon,y_1}^{q-1}\Big(\sum_{i=2}^k w_{\varepsilon,y_i}\Big)
 -qw_{\varepsilon,y_1}\Big(\sum_{i=2}^k w_{\varepsilon,y_i}\Big)^{q-1}\\
 \le&\ \left\{
            \begin{array}{ll}
              C w_{\varepsilon,y_1}^{\frac{q}{2}}\Big(\sum\limits_{i=2}^k w_{\varepsilon,y_i}\Big)^{\frac{q}{2}}, & \hbox{$2<q\le3$,} \\
              Cw_{\varepsilon,y_1}^{q-2}\Big(\sum\limits_{i=2}^k w_{\varepsilon,y_i}\Big)^2+Cw_{\varepsilon,y_1}^2\Big(\sum\limits_{i=2}^k w_{\varepsilon,y_i}\Big)^{q-2}, & \hbox{$q>3$.}
            \end{array}
          \right.
\end{aligned}
\end{equation}
If\ $2<q\le3$, by\ \eqref{216}, we have
\begin{equation}
% \nonumber to remove numbering (before each equation)
\int w_{\varepsilon,y_1}^{\frac{q}{2}}\Big(\sum_{i=2}^k w_{\varepsilon,y_i}\Big)^{\frac{q}{2}} \le C\int\sum_{i=2}^k w_{\varepsilon,y_1}^{\frac{q}{2}}w_{\varepsilon,y_i}^{\frac{q}{2}}
   \le C\sum_{i=2}^k \varepsilon^3 e^{-\frac q2\frac{\eta|y_1-y_i|}{\varepsilon}}.
\end{equation}
If\ $q>3$, by\ \eqref{216}, we have
\begin{equation}\label{331}
  \int \bigg(  w_{\varepsilon,y_1}^{q-2}\Big(\sum\limits_{i=2}^k w_{\varepsilon,y_i}\Big)^2+w_{\varepsilon,y_1}^2\Big(\sum\limits_{i=2}^k w_{\varepsilon,y_i}\Big)^{q-2}    \bigg)
  \le\left\{
                                               \begin{array}{ll}
                                                 C\varepsilon^3\sum\limits_{i=2}^k e^{-(q-2)\frac{\eta|y_1-y_i|}{\varepsilon}}, & \hbox{$3<q\le4$;} \\
                                                C\varepsilon^3\sum\limits_{i=2}^k e^{-2\frac{\eta|y_1-y_i|}{\varepsilon}}, & \hbox{$q>4$.}
                                               \end{array}
                                             \right.
\end{equation}
Denote
\begin{equation*}
  1+\bar\sigma=\left\{
                 \begin{array}{ll}
                   \frac q2, & \hbox{$2<q\le3$,} \\
                   q-2, & \hbox{$3<q\le4$,} \\
                   2, & \hbox{$q>4$.}
                 \end{array}
               \right.
\end{equation*}
Combining\ (\ref{329})--(\ref{331}) yields
\begin{equation}\label{332}
\begin{aligned}
% \nonumber to remove numbering (before each equation)
   \int \bigg(\Big(\sum_{i=1}^kw_{\varepsilon,y_i}\Big)^q-\sum_{i=1}^kw_{\varepsilon,y_i}^q \bigg) =&\  q\int \sum_{i<j}w_{\varepsilon,y_i}^{q-1} w_{\varepsilon,y_j}+q\int \sum_{i=1}^{k-1} w_{\varepsilon,y_i}\Big(\sum_{j=i+1}^{k}w_{\varepsilon,y_j}\Big)^{q-1}\\
  &\ +O\Big(\varepsilon^3\sum_{i<j}e^{-(1+\bar\sigma)\frac{\eta|y_i-y_j|}{\varepsilon}}\Big)\\
=&\ q\int \sum_{i=1}^{k-1} w_{\varepsilon,y_i}\Big(\sum_{j=i+1}^{k}w_{\varepsilon,y_j}\Big)^{q-1}\\
  &\
+O\Big(\varepsilon^3\sum_{i<j}e^{-\frac{\eta|y_i-y_j|}{\varepsilon}}\Big).
\end{aligned}
\end{equation}
Secondly, to estimate the fourth term of \eqref{327}, we have
\begin{eqnarray}\label{333}
% \nonumber to remove numbering (before each equation)
 \int \left(Q(x)-1\right)\Big(\sum_{i=1}^kw_{\varepsilon,y_i}\Big)^q= \sum_{i=1}^k\int \left(Q(x)-1\right)w_{\varepsilon,y_i}^q
 +O\Big(\varepsilon^3\sum_{i\not=j}e^{-\frac{\eta|y_i-y_j|}{\varepsilon}}\Big).
\end{eqnarray}
Estimating the first term of the right side of\ (\ref{333}), we obtain
\begin{equation}\label{334}
\begin{aligned}
% \nonumber to remove numbering (before each equation)
 \int_{\mathbb R^3} \left(Q(x)-1\right) w_{\varepsilon,y_i}^q
 %=&\ \int_{\mathbb R^3} \left(Q(x)-Q(y_i)\right)w_{\varepsilon,y_i}^q+\int_{\mathbb R^3} \left(Q(y_i)-1\right) w_{\varepsilon,y_i}^q \\
   =&\ \int_{B_\delta(y_i)}\left(Q(x)-Q(y_i)\right)w_{\varepsilon,y_i}^q+\int_{B_\delta^c(y_i)}\left(Q(x)-Q(y_i)\right) w_{\varepsilon,y_i}^q \\
   &\ +\int_{\mathbb R^3}  \left(Q(y_i)-1\right)w_{\varepsilon,y_i}^q,
\end{aligned}
\end{equation}
\begin{equation}\label{335}
\begin{aligned}
% \nonumber to remove numbering (before each equation)
  \Big|\int_{B_\delta^c(y_i)}\left(Q(x)-Q(y_i)\right) w_{\varepsilon,y_i}^q \Big|&\le\int_{B_\delta^c(y_i)}|Q(x)-Q(y_i)|w_{\varepsilon,y_i}^q
  \le  C\varepsilon^3\int _{|y|\ge \frac \delta\varepsilon}w^q(y) \\
   &\le C \varepsilon^3 \int _{|y|\ge \frac \delta\varepsilon}e^{- q\eta |y|}|y|^{-q}
   \le C\varepsilon^3 e^{-\frac{\bar q\eta \delta}{\varepsilon}},
\end{aligned}
\end{equation}
where\ $\bar q=q-\hat{\theta}$ with $\hat{\theta}>0$ is a small constant, and
\begin{equation}\label{336}
\begin{aligned}
% \nonumber to remove numbering (before each equation)
   \int_{B_\delta(y_i)} \left(Q(x)-Q(y_i)\right)w_{\varepsilon,y_i}^q &\le C\varepsilon^3\int_{|y|< \frac{\delta}{\varepsilon}}\Big(\sum_{m=1}^{[h]} \varepsilon^m|y|^m|D^mQ(y_i)|+\varepsilon^{[h]+1}|y|^{[h]+1}\Big) w^q(y) \\
   &\le C\Big(\sum_{m=1}^{[h]} \varepsilon^{3+m}|D^mQ(y_i)|+\varepsilon^{4+[h]}\Big).
\end{aligned}
\end{equation}
Combining\ (\ref{334})--(\ref{336}) yields
\begin{equation}\label{337}
\begin{aligned}
% \nonumber to remove numbering (before each equation)
   \int \left(Q(x)-1\right)w_{\varepsilon,y_i}^q =&\ \left(Q(y_i)-1\right)\int w_{\varepsilon,y_i}^q+\varepsilon^3O\big(e^{-\frac{\bar q\eta \delta}{\varepsilon}}+\varepsilon^{[h]+1}\big) +\varepsilon^3 O\Big(\sum_{m=1}^{[h]} \varepsilon^m|D^mQ(y_i)|\Big) \\
   =&\ \left(Q(y_i)-1\right)\varepsilon^3\|w\|_{L^q}^q+O\big(\varepsilon^{4+[h]}\big)
   +O\Big(\sum_{m=1}^{[h]} \varepsilon^{3+m}|D^mQ(y_i)|\Big).
\end{aligned}
\end{equation}
Combining\ (\ref{327}), (\ref{332}), (\ref{333}) and (\ref{337}) yields\ (\ref{325}).
\end{proof}
\begin{lem}\label{lemma A.2}
For any\ $Y\in D^k_{\varepsilon,\delta}$, there holds
\begin{equation}\label{517}
\begin{aligned}
% \nonumber to remove numbering (before each equation)
  \frac {\partial K} {\partial y_{il}} =&\ -C\varepsilon^3D_lQ(y_i)-(q-1)\sum_{j=1,j\not =i}^k\int w_{\varepsilon,y_i}^{q-2} w_{\varepsilon,y_j}\frac {\partial w_{\varepsilon,y_{i}}} {\partial y_{il}} \\ &\  +O\Big(\sum_{i=1}^k \sum_{m=2}^{[h]}\varepsilon^{2+m-\tau}|D^mQ(y_i)|^{1-\tau}+\varepsilon^{3+[h]-\tau}\Big) \\
  &\ +O\Big(\varepsilon^2\sum_{i=1}^k|Q(y_i)-1|^{1-\tau}\Big)+O\Big(\varepsilon^2\sum_{i\not =j}e^{-(\bar\theta-\tau)\frac{\eta|y_i-y_j|}{\varepsilon}}\Big),
   \end{aligned}
   \end{equation}
where\ $i=1,\ldots,k$ and $l=1,2,3$.
\end{lem}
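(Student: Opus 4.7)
The plan is to start from the identity $K(Y)=J_\varepsilon(Y,\varphi_{\varepsilon,Y})=I_\varepsilon(W_{\varepsilon,Y}+\varphi_{\varepsilon,Y})$ and apply the chain rule,
\[
\frac{\partial K}{\partial y_{il}}=\Big\langle I_\varepsilon'(W_{\varepsilon,Y}+\varphi_{\varepsilon,Y}),\ \frac{\partial W_{\varepsilon,Y}}{\partial y_{il}}+\frac{\partial \varphi_{\varepsilon,Y}}{\partial y_{il}}\Big\rangle.
\]
The critical point condition of Lemma \ref{lm3.5} then disposes of the $\partial\varphi_{\varepsilon,Y}/\partial y_{il}$ contribution up to errors controlled by $\|\varphi_{\varepsilon,Y}\|_\varepsilon$ times the norm of a Lagrange-multiplier combination of $\partial w_{\varepsilon,y_j}/\partial y_{jm}$; a direct estimate using (\ref{401}) shows these fall into the $O(\cdot)$ terms in the statement. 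Thus the task reduces to computing $\langle I_\varepsilon'(W_{\varepsilon,Y}),\partial W_{\varepsilon,Y}/\partial y_{il}\rangle$ plus terms of the form $\langle [I_\varepsilon'(W+\varphi)-I_\varepsilon'(W)],\partial W/\partial y_{il}\rangle$.

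For the main term, I would exploit that each $w_{\varepsilon,y_j}$ solves $-(\varepsilon^2 a+\varepsilon b k\int|\nabla w_{\varepsilon,y_j}|^2)\Delta w_{\varepsilon,y_j}+w_{\varepsilon,y_j}=w_{\varepsilon,y_j}^{q-1}$ (recall $Q(0)=1$), so that, exactly as in the derivation (\ref{35}) of $l_{\varepsilon,Y}$,
\[
\langle I_\varepsilon'(W_{\varepsilon,Y}),\psi\rangle=\int\sum_j w_{\varepsilon,y_j}^{q-1}\psi+\varepsilon b\Big(\int|\nabla W_{\varepsilon,Y}|^2-k\int|\nabla w_{\varepsilon,y_j}|^2\Big)\int\nabla W_{\varepsilon,Y}\nabla\psi-\int Q(x)W_{\varepsilon,Y}^{q-1}\psi.
\]
Setting $\psi=\partial W_{\varepsilon,Y}/\partial y_{il}$ and splitting $\int Q W^{q-1}\psi$ into $\int w_{\varepsilon,y_i}^{q-1}\psi+\int(\sum_j w_{\varepsilon,y_j})^{q-1}\psi-\int\sum_j w_{\varepsilon,y_j}^{q-1}\psi$ plus a $Q(x)-1$ correction, two leading pieces emerge. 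First, from $\int(Q(x)-Q(y_i))w_{\varepsilon,y_i}^{q-1}\partial_{y_{il}}w_{\varepsilon,y_i}$: Taylor expanding $Q$ around $y_i$, changing variables $x=\varepsilon y+y_i$, integrating by parts using that $w$ is radial (so $\int w^q=(q/(q-1))$ times a moment which, after the $D_lQ(y_i)$ coefficient, produces $-C\varepsilon^3 D_lQ(y_i)$ for an explicit $C>0$), and bounding the higher-order Taylor terms by $O(\varepsilon^{2+m}|D^mQ(y_i)|)$ and $O(\varepsilon^{3+[h]})$. Second, from the cross part $\int[(\sum_j w_{\varepsilon,y_j})^{q-1}-\sum_j w_{\varepsilon,y_j}^{q-1}]\partial_{y_{il}}w_{\varepsilon,y_i}$: the dominant contribution after the standard split (\ref{36})--(\ref{37}) gives precisely $-(q-1)\sum_{j\ne i}\int w_{\varepsilon,y_i}^{q-2}w_{\varepsilon,y_j}\partial_{y_{il}}w_{\varepsilon,y_i}$, while the remainder is $O(\varepsilon^2 e^{-(\bar\theta-\tau)\eta|y_i-y_j|/\varepsilon})$ using (\ref{216}) and Hölder.

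For the error terms I would bound $\langle I_\varepsilon''(W)\varphi_{\varepsilon,Y},\partial W/\partial y_{il}\rangle$ and $\langle R'_{\varepsilon,Y}(\varphi_{\varepsilon,Y}),\partial W/\partial y_{il}\rangle$ by Lemmas \ref{lm3.1}, \ref{lm3.2}, using $\|\partial W_{\varepsilon,Y}/\partial y_{il}\|_\varepsilon=O(\varepsilon^{1/2})$ together with (\ref{401}). A short Hölder argument on each summand in (\ref{401}) raises $\|\varphi_{\varepsilon,Y}\|_\varepsilon\cdot\varepsilon^{-3/2}$ to the corresponding $(1-\tau)$ exponent, producing $O(\varepsilon^2|Q(y_i)-1|^{1-\tau})$, $O(\varepsilon^{2+m-\tau}|D^mQ(y_i)|^{1-\tau})$, $O(\varepsilon^{3+[h]-\tau})$, and $O(\varepsilon^2 e^{-(\bar\theta-\tau)\eta|y_i-y_j|/\varepsilon})$, which is exactly (\ref{517}). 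The contribution of the Kirchhoff nonlocal piece $\varepsilon b(\int|\nabla W|^2-k\int|\nabla w_{\varepsilon,y_j}|^2)\int\nabla W\nabla\partial_{y_{il}}W$ is absorbed via (\ref{217}) into the exponential error.

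The main obstacle will be the cross-term $\langle I_\varepsilon'(W+\varphi)-I_\varepsilon'(W),\partial W/\partial y_{il}\rangle$, where the nonlocal Kirchhoff correction mixes with the peak-interaction quadratic form. Showing that this contributes only $O(\varepsilon^2\sum_i|Q(y_i)-1|^{1-\tau}+\cdots)$ and \emph{not} a term of the same order as the main $-C\varepsilon^3 D_lQ(y_i)$ term requires carefully pairing the $(q-1)$-linearisation estimate for $A_2^{(1)}$ from Lemma \ref{lm3.2} with the decomposition of $\|\varphi_{\varepsilon,Y}\|_\varepsilon$ in (\ref{401}), and invoking the orthogonality in $E^k_{\varepsilon,Y}$ to kill the diagonal $\int w_{\varepsilon,y_i}^{q-2}\varphi_{\varepsilon,Y}\partial_{y_{il}}w_{\varepsilon,y_i}$ piece up to the allowed error.
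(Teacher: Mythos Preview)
Your approach matches the paper's: the decomposition into $l_{\varepsilon,Y}(\partial W_{\varepsilon,Y}/\partial y_{il})$, $\langle I_\varepsilon''(W_{\varepsilon,Y})[\varphi_{\varepsilon,Y}],\partial W_{\varepsilon,Y}/\partial y_{il}\rangle$, and $\langle R'_{\varepsilon,Y}(\varphi_{\varepsilon,Y}),\partial W_{\varepsilon,Y}/\partial y_{il}\rangle$, with the $\partial\varphi_{\varepsilon,Y}/\partial y_{il}$ contribution vanishing by Lemma~\ref{lm3.5}, is exactly what the paper does in (\ref{518})--(\ref{528}). Your ``main obstacle'' is overstated, however: the diagonal piece $\int w_{\varepsilon,y_i}^{q-2}\varphi_{\varepsilon,Y}\,\partial_{y_{il}}w_{\varepsilon,y_i}$ requires no orthogonality trick --- a direct H\"older bound already gives $O(\varepsilon^{1/2}\|\varphi_{\varepsilon,Y}\|_\varepsilon)$ (this is the paper's (\ref{533})), which via (\ref{401}) lands inside the error of (\ref{517}); the orthogonality $\varphi_{\varepsilon,Y}\in E^k_{\varepsilon,Y}$ is used only to kill the linear piece $\langle\varphi_{\varepsilon,Y},\partial W_{\varepsilon,Y}/\partial y_{il}\rangle_\varepsilon=0$ (the paper's (\ref{530})).
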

\begin{proof}
By the definition of $K(Y)$, we have
\begin{equation}\label{518}
 \frac {\partial K} {\partial y_{il}}= \frac {\partial J_\varepsilon} {\partial y_{il}}+\bigg\langle {\frac {\partial J_\varepsilon} {\partial \varphi_{\varepsilon,Y}}},{\frac {\partial \varphi_{\varepsilon,Y}} {\partial y_{il}}} \bigg\rangle.
\end{equation}
First, estimating the first term of \eqref{518}, we obtain
\begin{equation}\label{519}
\begin{aligned}
% \nonumber to remove numbering (before each equation)
  \frac {\partial J_\varepsilon} {\partial y_{il}}=&\ \int \varepsilon^2a\nabla ({W_{\varepsilon ,Y}}+\varphi_{\varepsilon,Y})\nabla {\frac{\partial {W_{\varepsilon,Y}}}{\partial y_{il}}}+\int {({W_{\varepsilon ,Y}}+\varphi_{\varepsilon,Y})\frac{\partial {W_{\varepsilon,Y}}}{\partial y_{il}}} \\
  &\ +
  \varepsilon b{\int {\left| {\nabla ({W_{\varepsilon ,Y}+\varphi_{\varepsilon,Y}})} \right|} ^2}\int {\nabla ({W_{\varepsilon ,Y}+\varphi_{\varepsilon,Y}} )}\nabla {\frac{\partial {W_{\varepsilon,Y}}}{\partial y_{il}}}
  -\int {Q(x)({W_{\varepsilon ,Y}}+\varphi_{\varepsilon,Y})^{q-1}_+{\frac{\partial {W_{\varepsilon,Y}}}{\partial y_{il}}}}  \\
  =&\ \bigg\langle {{I_\varepsilon }'\left( {{W_{\varepsilon ,Y}}} \right), \frac{\partial {W_{\varepsilon,Y}}}{\partial y_{il}}} \bigg\rangle+ \bigg\langle {{I_\varepsilon }''\left( {{W_{\varepsilon ,Y}}} \right)\left[ \varphi_{\varepsilon,Y} \right],\frac{\partial {W_{\varepsilon,Y}}}{\partial y_{il}} } \bigg\rangle+\bigg\langle {{R'_{\varepsilon ,Y}}\left( \varphi_{\varepsilon,Y}  \right)},{\frac{\partial {W_{\varepsilon,Y}}}{\partial y_{il}}} \bigg\rangle  \\
  =&\ {l_{\varepsilon ,Y}} \bigg(\frac{\partial {W_{\varepsilon,Y}}}{\partial y_{il}} \bigg) +\bigg\langle {{I_\varepsilon }''\left( {{W_{\varepsilon ,Y}}} \right)\left[ \varphi_{\varepsilon,Y} \right],\frac{\partial {W_{\varepsilon,Y}}}{\partial y_{il}} } \bigg\rangle + \bigg\langle {{R'_{\varepsilon ,Y}}( \varphi_{\varepsilon,Y}  )},{\frac{\partial {W_{\varepsilon,Y}}}{\partial y_{il}}} \bigg\rangle.
   \end{aligned}
   \end{equation}
To estimate the first term of the right side of \eqref{519}, since\ $Q(0)=1$ and\ $w$ is the solution of\ (\ref{25}), we obtain\ $w_{\varepsilon,y_j}(1\le j\le k)$ satisfies
\begin{equation*}
  -\bigg(\varepsilon^2a+\varepsilon bk\int|\nabla w_{\varepsilon,y_t}|^2\bigg)\Delta w_{\varepsilon,y_j}+w_{\varepsilon,y_j}=w_{\varepsilon,y_j}^{q-1},\ t=1,\ldots,k.
\end{equation*}
Sum $j$ from $1$ to $k$, we obtain
\begin{equation*}
  -\bigg(\varepsilon^2a+\varepsilon bk\int|\nabla w_{\varepsilon,y_t}|^2\bigg)\Delta W_{\varepsilon,Y}+W_{\varepsilon,Y}=\sum_{j=1}^kw_{\varepsilon,y_j}^{q-1}.
  \end{equation*}
Multiplying $\frac{\partial {W_{\varepsilon,Y}}}{\partial y_{il}}$ on both sides of the above equality and integrating, we have
\begin{equation*}
  \bigg(\varepsilon^2a+\varepsilon bk\int|\nabla w_{\varepsilon,y_t}|^2\bigg)\int\nabla W_{\varepsilon,Y}\nabla{\frac{\partial {W_{\varepsilon,Y}}}{\partial y_{il}}}+\int W_{\varepsilon,Y}\frac{\partial {W_{\varepsilon,Y}}}{\partial y_{il}}
=\int \sum_{j=1}^kw_{\varepsilon,y_j}^{q-1}\frac{\partial {W_{\varepsilon,Y}}}{\partial y_{il}}.
\end{equation*}
Then
\begin{equation*}\label{520}
\begin{aligned}
% \nonumber to remove numbering (before each equation)
  \bigg\langle  W_{\varepsilon,Y},{\frac{\partial {W_{\varepsilon,Y}}}{\partial y_{il}}}\bigg\rangle_\varepsilon  &= \int\Big(\varepsilon^2 a\nabla W_{\varepsilon,Y}\nabla{\frac{\partial {W_{\varepsilon,Y}}}{\partial y_{il}}} +W_{\varepsilon,Y}\frac{\partial {W_{\varepsilon,Y}}}{\partial y_{il}}\Big)\\
   &= -\varepsilon bk\int|\nabla w_{\varepsilon,y_t}|^2\int \nabla W_{\varepsilon,Y}\nabla\frac{\partial {W_{\varepsilon,Y}}}{\partial y_{il}}+\int \sum_{j=1}^kw_{\varepsilon,y_j}^{q-1}\frac{\partial {W_{\varepsilon,Y}}}{\partial y_{il}},
\end{aligned}
\end{equation*}
Hence,
\begin{equation}\label{570}
\begin{aligned}
% \nonumber to remove numbering (before each equation)
  {l_{\varepsilon ,Y}}\bigg( \frac{\partial {W_{\varepsilon,Y}}}{\partial y_{il}} \bigg)=&\ \varepsilon b\int \nabla W_{\varepsilon,Y}\nabla\frac{\partial {W_{\varepsilon,Y}}}{\partial y_{il}}\bigg(\int |\nabla  W_{\varepsilon,Y}|^2-\int \sum_{t=1}^k|\nabla w_{\varepsilon,y_t}|^2\bigg)\\
  &\ -\bigg(\int Q(x)W_{\varepsilon,Y}^{q-1}\frac{\partial {W_{\varepsilon,Y}}}{\partial y_{il}}-\int \sum_{j=1}^k w_{\varepsilon,y_j}^{q-1}\frac{\partial {W_{\varepsilon,Y}}}{\partial y_{il}}\bigg) \\
  =:&\ \tilde{l}_1-\tilde{l}_2.
\end{aligned}
\end{equation}
By similar estimates of\ $l_1$  as that of Lemma\ \ref{lm3.1}, we have
\begin{eqnarray}\label{521}
% \nonumber to remove numbering (before each equation)
  |\tilde{l}_1| \le  C\varepsilon^{2}\sum_{i\not=j}e^{-\frac{\eta|y_i-y_j|}{\varepsilon}}.
\end{eqnarray}
Next, to estimate\ $\tilde{l}_2$, we have
\begin{equation}\label{522}
\begin{aligned}
% \nonumber to remove numbering (before each equation)
  \tilde{l}_2&=\int Q(x)\Big(\sum_{j=1}^k w_{\varepsilon ,y_j}\Big)^{q-1}\frac{\partial {w_{\varepsilon,y_i}}}{\partial y_{il}}-\int \sum_{j=1}^k w_{\varepsilon,y_j}^{q-1}\frac{\partial {w_{\varepsilon,y_i}}}{\partial y_{il}}\\
    &= \int \bigg(  \Big(\sum_{j=1}^k w_{\varepsilon ,y_j}\Big)^{q-1}-\sum_{j=1}^k w_{\varepsilon,y_j}^{q-1} \bigg)\frac{\partial {w_{\varepsilon,y_i}}}{\partial y_{il}}+\int\left(Q(x)-1\right) \Big(\sum_{j=1}^k w_{\varepsilon ,y_j}\Big)^{q-1}\frac{\partial {w_{\varepsilon,y_i}}}{\partial y_{il}}\\
  &=: \tilde{l}_{21}+\tilde{l}_{22}.
\end{aligned}
\end{equation}
Since
\begin{eqnarray}\label{523}
% \nonumber to remove numbering (before each equation)
  \tilde{l}_{21} \le (q-1)\sum_{j=1,j\not =i}^k\int w_{\varepsilon,y_i}^{q-2} w_{\varepsilon,y_j}\frac {\partial w_{\varepsilon,y_{i}}} {\partial y_{il}}+O\Big(\varepsilon^2\sum_{i\not =j}e^{-\bar\theta\frac{\eta|y_i-y_j|}{\varepsilon}}\Big),
\end{eqnarray}
and
\begin{equation}\label{524}
\begin{aligned}
% \nonumber to remove numbering (before each equation)
  \tilde{l}_{22} =&\ \sum_{j=1}^k \int  \left(Q(x)-1\right) w_{\varepsilon,y_j}^{q-1}\frac{\partial {w_{\varepsilon,y_i}}}{\partial y_{il}}+O\Big(\varepsilon^2\sum_{i\not =j}e^{-\bar\theta\frac{\eta|y_i-y_j|}{\varepsilon}}\Big) \\
   =&\ \int (Q(x)-1)w_{\varepsilon,y_i}^{q-1}\frac{\partial {w_{\varepsilon,y_i}}}{\partial y_{il}}+O\Big(\varepsilon^2\sum_{i\not =j}e^{-\bar\theta\frac{\eta|y_i-y_j|}{\varepsilon}}\Big) \\
   =&\ \frac {1}{q}\int \frac{\partial Q(x)}{\partial x_l}w_{\varepsilon,y_i}^q+O\Big(\varepsilon^2\sum_{i\not =j}e^{-\bar\theta\frac{\eta|y_i-y_j|}{\varepsilon}}\Big) \\
   =&\ C\varepsilon^3D_lQ(y_i) +O\Big(\sum_{i=1}^k \sum_{m=2}^{[h]}\varepsilon^{2+m}|D^mQ(y_i)|+\varepsilon^{3+[h]}\Big) \\
   &\ +O\Big(\varepsilon^2\sum_{i\not =j}e^{-\bar\theta\frac{\eta|y_i-y_j|}{\varepsilon}}\Big),
\end{aligned}
\end{equation}
combining\ (\ref{521})--(\ref{524}) yields
\begin{equation}\label{525}
\begin{aligned}
% \nonumber to remove numbering (before each equation)
  {l_{\varepsilon ,Y}}\bigg( \frac{\partial {W_{\varepsilon,Y}}}{\partial y_{il}} \bigg)=&\ -C\varepsilon^3D_lQ(y_i) -(q-1)\sum_{j=1,j\not =i}^k\int w_{\varepsilon,y_i}^{q-2} w_{\varepsilon,y_j}\frac {\partial w_{\varepsilon,y_{i}}} {\partial y_{il}}+O(\varepsilon^{3+[h]}) \\
   &\ +O\Big(\sum_{i=1}^k \sum_{m=2}^{[h]}\varepsilon^{2+m}|D^mQ(y_i)|\Big)+O\Big(\varepsilon^2\sum_{i\not =j}e^{-\bar\theta\frac{\eta|y_i-y_j|}{\varepsilon}}\Big).
\end{aligned}
\end{equation}
Next, we estimate the second term of the right side of (\ref{519}). We have
\begin{equation}\label{526}
\begin{aligned}
% \nonumber to remove numbering (before each equation)
  \bigg\langle {{I_\varepsilon }''\left( {{W_{\varepsilon ,Y}}} \right)\left[ \varphi_{\varepsilon,Y} \right],\frac{\partial {W_{\varepsilon,Y}}}{\partial y_{il}} } \bigg\rangle =&\ \bigg\langle \varphi_{\varepsilon,Y},\frac{\partial {W_{\varepsilon,Y}}}{\partial y_{il}}  \bigg\rangle_\varepsilon+\varepsilon b\int|\nabla W_{\varepsilon,Y}|^2\int {\nabla \varphi_{\varepsilon,Y}\nabla {\frac{\partial {W_{\varepsilon,Y}}}{\partial y_{il}}}}\\
   &\ +2\varepsilon b \int {\nabla W_{\varepsilon,Y} \nabla \varphi_{\varepsilon,Y}} \int {\nabla W_{\varepsilon,Y} \nabla\frac{\partial {W_{\varepsilon,Y}}}{\partial y_{il}}}\\
   &\ -(q-1)\int Q(x)W_{\varepsilon,Y}^{q-2}\frac{\partial {W_{\varepsilon,Y}}}{\partial y_{il}}\varphi_{\varepsilon,Y}.
\end{aligned}
\end{equation}
Since\ $\varphi_{\varepsilon,Y}\in E^k_{\varepsilon,Y}$,
\begin{eqnarray}\label{530}
\bigg\langle \varphi_{\varepsilon,Y},\frac{\partial {W_{\varepsilon,Y}}}{\partial y_{il}}  \bigg\rangle_\varepsilon =0.
\end{eqnarray}
By\ H\"older inequality, we obtain
\begin{equation}\label{531}
\begin{aligned}
% \nonumber to remove numbering (before each equation)
 \varepsilon b\int|\nabla W_{\varepsilon,Y}|^2\int {\nabla \varphi_{\varepsilon,Y}\nabla {\frac{\partial {W_{\varepsilon,Y}}}{\partial y_{il}}}} &\le\varepsilon bk\int\sum_{i=1}^k|\nabla w_{\varepsilon,y_i}|^2\|\nabla \varphi_{\varepsilon,Y}\|_{L^2}\Big\|\nabla {\frac{\partial {W_{\varepsilon,Y}}}{\partial y_{il}}}\Big\|_{L^2}  \\
   &\le C\varepsilon^{\frac 12}\|\varphi_{\varepsilon,Y}\|_{\varepsilon},
 \end{aligned}
 \end{equation}
\begin{equation}\label{532}
\begin{aligned}
% \nonumber to remove numbering (before each equation)
  \varepsilon b \int {\nabla W_{\varepsilon,Y} \nabla \varphi_{\varepsilon,Y}} \int {\nabla W_{\varepsilon,Y} \nabla\frac{\partial {W_{\varepsilon,Y}}}{\partial y_{il}}} &\le \varepsilon b\|\nabla W_{\varepsilon,Y}\|^2_{L^2}\|\nabla \varphi_{\varepsilon,Y}\|_{L^2}\Big\|\nabla {\frac{\partial {W_{\varepsilon,Y}}}{\partial y_{il}}}\Big\|_{L^2}   \\
   &\le C\varepsilon^{\frac 12}\|\varphi_{\varepsilon,Y}\|_{\varepsilon},
 \end{aligned}
 \end{equation}
 \begin{equation}\label{533}
 \begin{aligned}
% \nonumber to remove numbering (before each equation)
   \int Q(x)W_{\varepsilon,Y}^{q-2}\frac{\partial {W_{\varepsilon,Y}}}{\partial y_{il}}\varphi_{\varepsilon,Y}&\le C\bigg(\int W_{\varepsilon,Y}^{q}\bigg)^{\frac{q-2}{q}}\bigg(\int|\frac{\partial {W_{\varepsilon,Y}}}{\partial y_{il}}|^{q}\bigg)^{\frac{1}{q}}\bigg(\int|\varphi_{\varepsilon,Y}|^{q}\bigg)^{\frac{1}{q}}   \\
   &\le C\varepsilon^{\frac 12}\|\varphi_{\varepsilon,Y}\|_{\varepsilon}.
 \end{aligned}
 \end{equation}
Combining\ (\ref{526})--(\ref{533}) yields
\begin{eqnarray}\label{534}
% \nonumber to remove numbering (before each equation)
  \bigg\langle {{I_\varepsilon }''\left( {{W_{\varepsilon ,Y}}} \right)\left[ \varphi_{\varepsilon,Y} \right],\frac{\partial {W_{\varepsilon,Y}}}{\partial y_{il}} } \bigg\rangle = O(\varepsilon^{\frac 12}\|\varphi_{\varepsilon,Y}\|_{\varepsilon}).
\end{eqnarray}
Besides, by Lemma\ \ref{lm3.2}, we have
\begin{eqnarray}\label{527}
\bigg\langle {{R'_{\varepsilon ,Y}}( \varphi_{\varepsilon,Y}  )},{\frac{\partial {W_{\varepsilon,Y}}}{\partial y_{il}}} \bigg\rangle =o_\varepsilon(1)\|\varphi_{\varepsilon,Y}\|_{\varepsilon}.
\end{eqnarray}
By Lemma\ \ref{lm3.5}, we have
\begin{eqnarray}\label{528}
\bigg\langle {\frac {\partial J_\varepsilon} {\partial \varphi_{\varepsilon,Y}}},{\frac {\partial \varphi_{\varepsilon,Y}} {\partial y_{il}}} \bigg\rangle =0.
\end{eqnarray}
Combining\ (\ref{525}) and\ (\ref{534})--(\ref{528}) yields\ (\ref{517}).
\end{proof}

\medskip

 \noindent\textbf{Acknowledgments}
The authors would like to thank Chunhua Wang from Central China Normal University for the helpful discussion with her. This paper was supported by NSFC grants (No.12071169).
%\newpage
 %~~~~~~~~~~~~~~~~~~~~~~~~~~~~~~~~~~~~~~~~~~~~~~~~~~~~~~~~~~~~~~~~~~~~~~~~~~~~~~~~~~

\end{document}